\documentclass[3p,times]{elsarticle}
\makeatletter
\def\ps@pprintTitle{%
	\let\@oddhead\@empty
	\let\@evenhead\@empty
	\def\@oddfoot{}
	\let\@evenfoot\@oddfoot
}
\makeatother
\usepackage{enumerate}
\usepackage{amsmath,amsthm,verbatim,amssymb,amsfonts,amscd,graphicx}
\usepackage{hyperref}
\usepackage{mathrsfs}
\usepackage{tikz}
\usepackage{tikz-cd}
\usepackage{float}
\usepackage{hyperref}
\usepackage[all]{xy}
\usepackage{multicol}
\usepackage[mathscr]{euscript} 
\usepackage{ mathrsfs }

\newcommand{\n}{\mathfrak{n} }
\newcommand{\m}{\mathfrak{m} }

\providecommand{\e}{{\mathcal E}}
\providecommand{\D}{{\mathcal D}}
\newcommand{\Z}{\mathbb{Z} }
\newcommand{\N}{\mathbb{N} }
\newcommand{\TT}{\mathcal{T}}

\newcommand{\QQ}{\mathbb{Q} }

\newcommand{\FF}{\mathcal{F}}

\newcommand{\rt}{\rightarrow}
\newcommand{\lrt}{\longrightarrow}
\newcommand{\ov}{\overline}

\newcommand{\coker}{\operatorname{coker}}
\newcommand{\Ass}{\operatorname{Ass}}
\newcommand{\chars}{\operatorname{char}}
\newcommand{\depth}{\operatorname{depth}}

\newcommand{\gr}{\operatorname{gr}}

\newcommand{\hgt}{\operatorname{ht}}

\providecommand\Mod{\text{\rm Mod}}

\newcommand{\injdim}{\operatorname{injdim}}

\newcommand{\bideg}{\operatorname{bideg}}

\newcommand{\xdeg}{\operatorname{x-deg}}
\newcommand{\ydeg}{\operatorname{y-deg}}

\theoremstyle{plain}

\newtheorem{theorem}{Theorem}[section]
\newtheorem{corollary}[theorem]{Corollary}
\newtheorem{lemma}[theorem]{Lemma}
\newtheorem{proposition}[theorem]{Proposition}

\theoremstyle{definition}
\newtheorem{definition}[theorem]{Definition}
\newtheorem{s}[theorem]{}
\newtheorem{remark}[theorem]{Remark}

\newtheorem{example}[theorem]{Example}
\newtheorem{examples}[theorem]{Examples}
\newtheorem*{example*}{\it Example}

\theoremstyle{remark}
\newtheorem*{claim*}{\it Claim}

\newtheorem*{case*}{\it Case}
\newtheorem*{note*}{\it Note}

\begin{document}
	\begin{frontmatter}
		\title{Asymptotic behaviour of bigraded components of local cohomology modules}
		\author[label1]{Rajsekhar Bhattacharyya}
		\ead{rbhattacharyya@gmail.com}
		\address[label1]{Dinabandhu Andrews College, Garia, Kolkata 700 084, India}		
		\author[label2]{Tony J. Puthenpurakal}
		\ead{tputhen@math.iitb.ac.in}
		\address[label2]{Department of Mathematics, Indian Institute of Technology Bombay, Powai, Mumbai, Maharashtra 400076, India}
		
		\author[label3]{Sudeshna Roy}
		\ead{sudeshna.roy@iitgn.ac.in}
		\address[label3]{Department of Mathematics, Indian Institute of Technology Gandhinagar, Palaj, Gandhinagar, Gujrat 382055, India}
		
		\author[label4]{Jyoti Singh}
		\ead{jyotijagrati@gmail.com}
		\address[label4]{Department of Mathematics, Visvesvaraya National Institute of Technology, South Ambazari Road, Nagpur, Maharashtra 440010, India}
		
\begin{abstract}
Let $C$ be a commutative Noetherian ring containing a field $K$ of characteristic zero. Let $R=C[X_1, \ldots, X_n, Y_1, \ldots, Y_m]$ be a polynomial ring over $C$ with $\bideg c=(0,0)$ for all $c \in C$, $\bideg X_i=(1,0)$ and $\bideg Y_j=(0,1)$ for $i=1, \ldots, n$ and $j=1, \ldots, m$. Let $I$ be a bihomogeneous ideal in $R$. In this article, we study asymptotic behaviour of bigraded pieces of the local cohomology module $H^i_I(R)$. Moreover, under the extra assumption that $C$ is regular, we investigate the asymptotic stability of invariants associated to its bigraded components. Consequently, we obtain certain properties of components of the bigraded local cohomology module $H^i_I(R)$, where $C=K$ is a field and $I$ is a binomial edge ideal.
     \end{abstract}
		
		\begin{keyword}
			local comohology \sep graded local cohomology\sep Weyl algebra, generalized Eulerian modules.
			\MSC[2010]Primary 13D45 \sep 14B15 \sep Secondary 13N10 \sep 32C36.
		\end{keyword}
		
	\end{frontmatter}

\section{Introduction}
Local cohomology was introduced by A. Grothendieck in the early 1960's seminars in Harvard in 1961. These modules play an important role in the study of algebraic and geometric properties of rings. Although several authors have studied these modules, their structure is still largely unknown. One drawback of local cohomology is that, in general, local cohomology modules are not necessarily finitely generated, and so can be somewhat complex to work with. Components of (multi)graded local cohomology modules supported on the irrelevant ideals are largely studied and have been applied drastically in both commutative algebra and algebraic geometry. In recent years, the local cohomology modules at monomial ideals gained considerable attention because of its  connection with the cohomology of coherent sheaves on a toric variety due to Cox \cite{Cox}. Besides, they are useful for constructing examples and non-examples. The main purpose of this article is to study the bigraded components of local cohomology modules supported on binomial ideals and investigate their asymptotic behaviors using the theory of $D$-modules. Note that many classical examples in algebraic geometry are associated to binomial ideals. 

Some authors have  used local cohomology as a vital tool in the study of binomial edge ideal \cite{Wil23_1} and \cite{Wil23_2}. Binomial edge ideals associate to a graph G a homogeneous ideal $\mathcal{J}_G$ in a polynomial ring over a field. Introduced by Herzog, Hibi, Hreinsdóttir, Kahle, and Rauh in \cite{HHHKR}, and independently by Ohtani in \cite{Oht11}, they found immediate application to the subject of conditional independence statements in statistics, and have since become widely studied objects in commutative algebra in their own right. There is a rich interplay between the algebraic properties of these ideals and the combinatorial properties of the corresponding graph. In \cite{Mon20}, \`Alvarez Montaner presented a Hochster type formula for the local cohomology modules of binomial edge ideals using the theory of local cohomology spectral sequences and provide a very explicit description of the graded pieces of these local cohomology modules. He presented a decomposition of the local cohomology modules of the generic initial ideal of a binomial edge ideal which is coarser than the original Hochster’s formula for monomial ideals. In this way he reduced the problem to a simple comparision of the graded pieces of the building blocks of these decompositions. In \cite{Wil23_1} and \cite{Wil23_2}, Williams studied the local cohomology modules $H^i_{\mathcal{J}_G}(R)$ and $H^i_{\mathfrak{m}}(R/\mathcal{J}_G)$ respectively, when $K$ is of prime characteristic $p > 0$. 

In this article, we primarily show that local cohomolgy modules supported on the following families of ideals, that have recently attracted the attention of several researchers, are generalized Eulerian bigraded modules (see Definition \ref{genEu_bi}): binomial edge ideals (see Examples \ref{eg}(i)) and certain determinantal ideals (see Examples \ref{eg}(ii)).

We are hopeful that the comprehensive study of bigraded components of local cohomology modules presented in this paper will be useful for the further study of binomial edge ideals and determinantal ideals. In this article, we do not use their underlying combinatorial structures, which could be a powerful tool for further study.

\s Let $S=\bigoplus_{u \in \N} S_u$ be a standard graded Noetherian ring, and let $L=\bigoplus_{u \in \Z} L_u$ be a finitely generated graded $S$-module. It is well known that for all $i \geq 0$,

(1) $H^i_{S_+}(L)_u$ is a finitely generated $S_0$-module for all $u \in \Z$,

(2) $H^i_{S_+}(L)_u=0$ for all $u \gg 0$,

\noindent
where $S_+=\bigoplus_{u>0} S_u$ is the irrelevant ideal of $S$, see \cite[Theorem 15.1.5]{BS}. In view of these facts, the second author investigated whether the components of local cohomology modules of $R=C[X_1, \ldots, X_n]$ with respect to arbitrary homogeneous ideals of $R$, have some predictable behavior. Many times it is seen that local cohomology modules of regular rings satisfy strong structural conditions. In \cite{TP2}, he naturally started with the case when $C$ is a regular ring containing a field of characteristic zero and presented a comprehensive study of graded components of local cohomology $H_I^i(R)$, where $I$ is an arbitrary homogeneous ideal in $R$, and also  found that they exhibit noticeable good behavior. Then along with the third author, in \cite{TPSR19} he moved to the case when $C=B^G$ is ring of invariants of a regular ring $B$ under the action of a finite group $G$. In \cite{TPSR}, they proved some results under more general setup when $C$ is any commutative Noetherian ring containing a field of characteristic zero and gave some examples to show the other results in \cite{TP2}.

The theory of $D$-modules which is a powerful tool connecting differential equations with algebraic geometry also bridged isolated fields like algebra, geometry, and mathematical physics. In \cite{Lyu1}, Lyubeznik, and later on several authors including the second author used the algebraic theory of D-modules very beautifully to study finiteness properties of (graded) local cohomology modules. In this article, we also use this theory to study properties of components of bigraded local cohomology modules, namely, vanishing, tameness and rigidity. We show that if $\TT$ is a bigraded Lyubeznik functor on ${}^* \Mod(R)$, then $\TT(R)$ is a bigraded generalized Eulerian module. Thus, our results on bigraded generalized Eulerian modules shed light on the properties of $\TT(R)$, especially local cohomology modules supported at arbitrary bihomogeneous ideals, not limited to monomial ones.

This article is structured as follows. In Section 2, we introduce and discuss certain
properties of bigraded generalized Eulerian modules and bigraded Lubeznik functors. In Section 3, we establish their  asymptotic vanishing behavior, Tameness and
rigidity properties. We provide a polynomial expression of $K$-vector space dimension of the bigraded components in Section 4. Section 5 is devoted to provide applications to local cohomology supported on bihomogeneous ideals.

\section{Notations}
Now we define some notations which will be used in rest of the paper.

\vspace{0.15cm}
For $(u_1,v_1), (u_2,v_2) \in \mathbb{Z}^2$, we define $(u_1,v_1) \leq (u_2,v_2)$ if $u_1 \leq u_2$ and $v_1 \leq v_2$.

\begin{definition}
A shaded region in the $uv$-plane is said to a \emph{block at  $(u_0, v_0) \in \mathbb{Z}^2$}, denoted by $\mathfrak{B}_{(u_0,v_0)}$, 
if it is the collection of points
\[\{(u, v) \in \mathbb{Z}^2 \mid u\geq u_0 \mbox{ if } u_0 \geq 0, u\leq u_0 \mbox{ if } u_0 \leq -n, v\geq v_0 \mbox{ if } v_0 \geq 0, \mbox{ and } v\leq v_0 \mbox{ if } v_0 \leq -m\},\]
\end{definition}

\noindent
Readers can refer to Remark \ref{block_fig} for a pictorial description. Sometimes, we write $\mathcal{B}$ instead of $\mathfrak{B}_{(u_0,v_0)}$ when the pair $(u_0, v_0)$ is arbitrary and does not carry much information. We further refer $(u_0,v_0)$ as the \emph{corner of the block $\mathfrak{B}_{(u_0,v_0)}$}.

\s\label{notations} {\bf Bigraded regions}. 

We shall represent the pieces of bigraded local cohomology modules to the co-ordinates in the $uv$-plane. We aim to capture their asymptotic behaviour in the regions defined below:  

\begin{minipage}{0.5\textheight}
\begin{enumerate}[\rm (i)]
	\item \emph{$\mathcal{NE}$-block or North East Block}: $\{(u,v)\in \Z^2$ with $(u,v) \geq (0,0)\}$.
	\item \emph{$\mathcal{NW^*}$-block  or North West Block}: $\{(u,v)\in \Z^2$ with $u \leq -n ~\text{and} ~ v\geq 0$\}.
	\item \emph{$\mathcal{S^*W^*}$-block  or South West Block}: $\{(u,v)\in \Z^2$ with $(u,v) \leq (-n,-m)\}$ 
	\item \emph{$\mathcal{S^*E}$-block  or South East Block}: $ \{(u,v)\in \Z^2$ with $u \geq 0 ~\text{and} ~ v \leq -m\}$
	\item \emph{$\mathcal{C}$-block or Complete Block}: $\{(u,v)\in \Z^2\}$ 
	\item \emph{$\mathcal{N}$-block or North Block}: $\{(u,v)\in \Z^2$ with $v \geq 0\}$.
	\item \emph{$\mathcal{W^*}$-block or Restricted West Block}: $\{(u,v)\in \Z^2$ with $u \leq -n\}$.
	\item  \emph{$\mathcal{S^*}$-block or Restricted South Block}: $\{(u,v)\in \Z^2$ with $v \leq -m\}$. 
	\item  \emph{$\mathcal{E}$-block  or East Block}: $\{(u,v)\in \Z^2$ with $u \geq 0\}$.	
\end{enumerate}
\end{minipage}
\begin{minipage}{0.225\textheight}
	\begin{center}
	\begin{tikzpicture}[scale=0.12]
	\draw[->] (-8.5,0)--(8.5,0) node[right]{$u$};
	\draw[->] (0,-8.5)--(0,8.5) node[above]{$v$};
	\draw[dashdotted, red] (-1,8.5)--(-1,-8.5) node[left]{$u=-n$};
	\draw[dashdotted, red] (-8.5,-1.5)--(8.5, -1.5) node[below]{$v=-m$};
	\draw[fill=cyan,fill opacity=0.35,draw=none] (0,8.5)--(0,0)--(8.5,0)--(8.5,8.5)--(0,8.5);
\node[red] at (0,0){\small{$\bullet$}};
	\node at (0,-11.25) {\footnotesize{\textit Figure: $\mathcal{NE}$}};
	\end{tikzpicture}
	\begin{tikzpicture}[scale=0.12]
	\draw[->] (-8.5,0)--(8.5,0) node[right]{$u$};
	\draw[->] (0,-8.5)--(0,8.5) node[above]{$v$};
	\draw[fill=cyan,fill opacity=0.35,draw=none] (-1,8.5)--(-1,0)--(-8.5,0)--(-8.5,8.5)--(-1,8.5);
	\draw[dashdotted, red] (-1,8.5)--(-1,-8.5);
\draw[dashdotted, red] (-8.5,-1.5)--(8.5, -1.5);
\node[red] at (-1,0){\small{$\bullet$}};
	\node at (0,-11) {\footnotesize{\textit Figure: $\mathcal{NW^*}$}};
	\end{tikzpicture}
\end{center}
\end{minipage}
\begin{center}
	\begin{tikzpicture}[scale=0.12]
	\draw[->] (-8.5,0)--(8.5,0) node[right]{$u$};
	\draw[->] (0,-8.5)--(0,8.5) node[above]{$v$};
	\draw[fill=cyan,fill opacity=0.35,draw=none] (-1,-8.5)--(-1,-1.5)--(-8.5,-1.5)--(-8.5,-8.5)--(-1,-8.5);
	\draw[dashdotted, red] (-1,8.5)--(-1,-8.5);
\draw[dashdotted, red] (-8.5,-1.5)--(8.5, -1.5);
\node[red] at (-1,-1.5){\small{$\bullet$}};
	\node at (0,-11) {\footnotesize{\textit Figure: $\mathcal{S^*W^*}$}};
	\end{tikzpicture}
	\hspace{0.25cm}
	\begin{tikzpicture}[scale=0.12]
	\draw[->] (-8.5,0)--(8.5,0) node[right]{$u$};
	\draw[->] (0,-8.5)--(0,8.5) node[above]{$v$};
	\draw[fill=cyan,fill opacity=0.35,draw=none] (0,-8.5)--(0,-1.5)--(8.5,-1.5)--(8.5,-8.5)--(0,-8.5);
	\draw[dashdotted, red] (-1,8.5)--(-1,-8.5);
\draw[dashdotted, red] (-8.5,-1.5)--(8.5, -1.5);
\node[red] at (0,-1.5){\small{$\bullet$}};
	\node at (0,-11) {\footnotesize{\textit Figure: $\mathcal{S^*E}$}};
	\end{tikzpicture}	
	\hspace{0.25cm}
			\begin{tikzpicture}[scale=0.12]
	\draw[->] (-8.5,0)--(8.5,0) node[right]{$u$};
	\draw[->] (0,-8.5)--(0,8.5) node[above]{$v$};
\draw[dashdotted, red] (-1,8.5)--(-1,-8.5);
\draw[dashdotted, red] (-8.5,-1.5)--(8.5, -1.5);
	\draw[fill=cyan,fill opacity=0.35,draw=none] (-8.5,8)--(8.5,8.5)--(8.5,0)--(-8.5,0);
	\node at (0,-11) {\footnotesize{\textit Figure: $\mathcal{N}$}};
	\end{tikzpicture}
	\hspace{0.25cm}
\begin{tikzpicture}[scale=0.12]
\draw[->] (-8.5,0)--(8.5,0) node[right]{$u$};
\draw[->] (0,-8.5)--(0,8.5) node[above]{$v$};
\draw[dashdotted, red] (-1,8.5)--(-1,-8.5);
\draw[dashdotted, red] (-8.5,-1.5)--(8.5, -1.5);
\draw[fill=cyan,fill opacity=0.35,draw=none] (-8.5,8.5)--(-1,8.5)--(-1,-8.5)--(-8.5,-8.5);
\node at (0,-11) {\footnotesize{\textit Figure: $\mathcal{W^*}$}};
\end{tikzpicture}	
\vspace{0.25cm}
\begin{tikzpicture}[scale=0.12]
\draw[->] (-8.5,0)--(8.5,0) node[right]{$u$};
\draw[->] (0,-8.5)--(0,8.5) node[above]{$v$};
\draw[dashdotted, red] (-1,8.5)--(-1,-8.5);
\draw[dashdotted, red] (-8.5,-1.5)--(8.5, -1.5);
\draw[fill=cyan,fill opacity=0.35,draw=none] (-8.5,-1.5)--(8.5,-1.5)--(8.5,-8.5)--(-8.5,-8.5);
\node at (0,-11) {\footnotesize{\textit Figure: $\mathcal{S^*}$}};
\end{tikzpicture}				
\vspace{0.25cm}
\begin{tikzpicture}[scale=0.12]
\draw[->] (-8.5,0)--(8.5,0) node[right]{$u$};
\draw[->] (0,-8.5)--(0,8.5) node[above]{$v$};
\draw[dashdotted, red] (-1,8.5)--(-1,-8.5);
\draw[dashdotted, red] (-8.5,-1.5)--(8.5, -1.5);
\draw[fill=cyan,fill opacity=0.35,draw=none] (0,8)--(8,8)--(8,-8.5)--(0,-8.5);
\node at (0,-11) {\footnotesize{\textit Figure: $\mathcal{E}$}};
\end{tikzpicture}			
\end{center}

\begin{remark}\label{block_fig}
Note that $(i), (ii), (iii),$ and $(iv)$ are associated to $\mathfrak{B}_{(0,0)}, \mathfrak{B}_{(-n,0)}, \mathfrak{B}_{(-n,-m)}$ and $\mathfrak{B}_{(0,-n)}$, respectively.	
\end{remark}	

For our study, we also consider the truncated regions: $(i)~ \mathrm{Trun}(\mathcal{N}):=\mathcal{N} \backslash \big(\mathcal{NE}\cup \mathcal{NW^*}\big),~ (ii)~ \mathrm{Trun}(\mathcal{W^*}):=\mathcal{W^*}\backslash \big(\mathcal{NW^*}\cup \mathcal{S^*W^*}\big),~ (iii)~ \mathrm{Trun}(\mathcal{S^*}):=\mathcal{S^*}\backslash \big(\mathcal{S^*W^*}\cup \mathcal{S^*E}\big)$, and $(iv)~ \mathrm{Trun}(\mathcal{E}):=\mathcal{E}\backslash\big(\mathcal{S^*E}\cup \mathcal{NE}\big) \big\}$
with their \emph{pair of corners} $\{(0,0), (-n,0)\}, \{(-n,0),(-n,-m)\}, \{(-n,-m),(-m,0)\}$ and $\{(-m,0),(0,0)\}$, respectively. We pictorially present these regions below and highlight their corners by red dots.

\begin{center}
	\begin{tikzpicture}[scale=0.12]
\draw[->] (-7.5,0)--(8.5,0) node[right]{$u$};
\draw[->] (0,-7.5)--(0,8.5) node[above]{$v$};
\draw[red,dashdotted] (8,-3)--(-7.5,-3)node[left]{$v=-m$};
\draw[red, dashdotted] (-2,-7.5)--(-2,8) node[left]{$u=-n$};
\node[red] at (0,0){\small{$\bullet$}};
\node[red] at (-2,0){\small{$\bullet$}};
\draw[fill=cyan,fill opacity=0.35,draw=none] (-2,8)--(0,8)--(0,0)--(-2,0);
\node at (0,-11) {\footnotesize{\textit Figure: $\mathrm{Trun}(\mathcal{N})$}};
\end{tikzpicture}					
\vspace{0.15cm}
\begin{tikzpicture}[scale=0.12]
\draw[->] (-7.5,0)--(8.5,0) node[right]{$u$};
\draw[->] (0,-7.5)--(0,8.5) node[above]{$v$};
\draw[red,dashdotted] (8,-3)--(-7.5,-3);
\draw[red, dashdotted] (-2,-7.5)--(-2,8); 
\node[red] at (-2,0){\small{$\bullet$}};
\node[red] at (-2,-3){\small{$\bullet$}};
\draw[fill=cyan,fill opacity=0.35,draw=none] (-2,0)--(-8,0)--(-8,-3)--(-2,-3);
\node at (0,-11) {\footnotesize{\textit Figure: $\mathrm{Trun}(\mathcal{W^*})$}};
\end{tikzpicture}	
\hspace{0.15cm}
\begin{tikzpicture}[scale=0.12]
\draw[->] (-7.5,0)--(8.5,0) node[right]{$u$};
\draw[->] (0,-7.5)--(0,8.5) node[above]{$v$};
\draw[red,dashdotted] (8,-3)--(-7.5,-3);
\draw[red, dashdotted] (-2,-7.5)--(-2,8);
\node[red] at (-2,-3){\small{$\bullet$}};
\node[red] at (0,-3){\small{$\bullet$}};
\draw[fill=cyan,fill opacity=0.35,draw=none] (-2,-3)--(0,-3)--(0,-8)--(-2,-8);
\node at (0,-11) {\footnotesize{\textit Figure: $\mathrm{Trun}(\mathcal{S^*})$}};
\end{tikzpicture}	
\hspace{0.15cm}
\begin{tikzpicture}[scale=0.12]
\draw[->] (-7.5,0)--(8.5,0) node[right]{$u$};
\draw[->] (0,-7.5)--(0,8.5) node[above]{$v$};
\draw[red,dashdotted] (8,-3)--(-7.5,-3);
\draw[red, dashdotted] (-2,-7.5)--(-2,8); 
\node[red] at (0,0){\small{$\bullet$}};
\node[red] at (0,-3){\small{$\bullet$}};
\draw[fill=cyan,fill opacity=0.35,draw=none] (0,0)--(0,-3)--(8,-3)--(8,0);
\node at (0,-11) {\footnotesize{\textit Figure: $\mathrm{Trun}(\mathcal{E})$}};
\end{tikzpicture}
\end{center}	

\section{Bigraded generalized Eulerian module}

\s \label{sa} {\bf Setup}. Suppose that $C$ is a commutative Noetherian ring containing a field $K$ of characteristic zero. Let $R=C[X_1, \ldots, X_n, Y_1, \ldots, Y_m]$ be a polynomial ring over $C$. Let $A_{m,n}(C)=R \langle \partial_1, \ldots, \partial_n, \delta_1, \ldots, \delta_n\rangle$ be the bigraded $(n,m)^{th}$-Weyl algebra over $C$, where $\partial_i=\partial/\partial X_i$ and  $\delta_j=\partial/\partial Y_j$ for $i=1, \ldots, m$ and $j=1, \ldots, n$. Consider both $R$ and $A_{m,n}(C)$ as standard $\Z^2$-graded with $\bideg (c)=(0,0)$ for all $c \in C$, $\bideg X_i=(1,0), \bideg Y_j=(0,1), \bideg \partial_i=-(1,0)$ and $\bideg \delta_i=-(0,1)$ for $i=1, \ldots, m$ and $j=1, \ldots, n$. To distinguish the $(n+m)^{th}$-Weyl algebra $A_{n+m}(C)$ over $C$ with its \emph{standard $\mathbb{Z}$-graded structure}, here we denote it by $A_{m,n}(C)$. Let $I$ be a bihomogeneous ideal in $R$. Then the local cohomology module $M:=H^i_I(R)=\bigoplus_{(u,v)} M_{(u,v)}$ has an induced bigraded structure. 

\newpage
\begin{examples}\label{eg}

\noindent
\begin{enumerate}[\rm (i)]
\item Let $R=K[X_1, \ldots, X_m, Y_1, \ldots, Y_m]$ be a polynomial ring over a field $K$. Let $G$ be a graph on $m$ vertices. By $E(G)$, we denote the set of edges $\{i, j\}$ of $G$. We define the \emph{binomial edge ideal of $G$} as
\[\mathcal{J}_G := (X_iY_j-X_jY_i \mid \{i, j\} \in E(G))\]
in $R$. Let $\mathfrak{m}=(X_1, \ldots, X_m, Y_1, \ldots, Y_m)$ denote the unique homogeneous maximal ideal in $R$. Recall that binomial edge ideals are radical. Note that $\mathcal{J}_G$ is a bihomogeneous ideal under the assumptions in \ref{sa}. The graded components of $H^i_{\mathfrak{m}}(R/\mathcal{J}_G)$ have been analyzed in \cite{Mon20}, when $R$ is standard $\mathbb{N}^m$-graded with $\deg X_j=\deg Y_j=e_j\in \mathbb{N}^m$, the $j^{th}$-canonical unit vector, for $j=1, \ldots, m$. 
 
 \vspace{0.15cm}
\item There are different kind of determinantal ideals (mostly with $2 \times 2$-minors) which are bihomogeneous under \ref{sa}. One such ideal is the double determinantal ideal, introduced by L. Li in \cite{L23}. Fix $r,m,n \geq 1$. Let ${\bf X} = \{{\bf X}^{(k)}\}_{k=1}^r$, where ${\bf X}^{(k)}=\big(X^{(k)}_{ij}\big)_{m \times n}$ is an $m\times n$ matrix consisting of independent variables. Let $R=K[\bf X]$ be a polynomial ring over a perfect field $K$. Let $H$ be the horizontal concatenation of these matrices, i.e., the $m \times rn$ matrix $H=\big({\bf X}^{(1)} \cdots {\bf X}^{(r)}\big)$ and let $V$ be their vertical concatenation, i.e., the $rm \times n$ matrix
{\small\[V=\left( {\begin{array}{c}
	{\bf X}^{(1)} \\
	\vdots\\
	{\bf X}^{(k)}
	\end{array} }\right).\]}

\noindent
The ideal $J$ generated by the $s$-minors of $H$ together with the $t$-minors of $V$ is called a \emph{double determinantal ideal}. Pick a nonempty proper subset $U\subset \{1, \ldots, m\}$. For $i \in U$, set $\bideg X^{(k)}_{ij}=(1,0)$  for all $j=1, \ldots, n$ and all $k=1, \ldots, r$. Similarly, for $i \notin U$, set $\bideg X^{(k)}_{ij}=(0,1)$ for all $j=1, \ldots, n$ and all $k=1, \ldots, r$. This will induce a standard bigraded structure on $R$. Notice $J$ is a bihomogeneous ideal in $R$ if $s=t=2$.
\end{enumerate} 
\end{examples}

\s We define the \emph{bigraded Euler operator} on $A_{n,m}(C)$ as $(\mathcal{E}^X_n, \mathcal{E}^Y_m)$ by setting
\[\mathcal{E}^X_n= \sum_{i=1}^n X_i \partial_i \quad \mbox{ and } \quad \mathcal{E}^Y_m= \sum_{j=1}^m Y_j \delta_j.\]

\begin{remark}\label{24hlc}
	Let \[
	A=
	{\begin{array}{cc}
		\underbrace{\left[ {\begin{array}{cccc}
		1 & 1 & \cdots & 1  \\
		0 & 0 & \cdots & 0 \\
		\end{array} }\right.}_n &
	\underbrace{\left.\begin{array}{cccc}
	0 & \cdots & 0 & 0 \\
	1 & \cdots & 1 & 1\\
		\end{array}\right]}_m  \end{array}} 
	\]
	be a $2 \times (n+m)$ integer matrix of rank $2$. Denote the columns of $A$ by $a_1, \ldots, a_{n+m}$. Then following the notations in \cite[page 248]{24Hrs} we get that $\bideg X_i=a_i, \bideg \partial_i=-a_i, \bideg Y_j=a_{n+j}$ and $\bideg \delta_j=-a_{n+j}$ for $i=1, \ldots, n$ and $j=1, \ldots, m$, which matches with our assumptions in \ref{sa}. Besides, $\e^X_n$ and $\e^Y_m$ are essentially the first and second Euler operators $E_1$ and $E_2$ in \cite{24Hrs}, respectively when $C=K$ is a field.
\end{remark}
We call a bigraded $A_{n,m}(C)$-module $E$ is {\it Eulerian} if for every bihomogeneous element $e$ in $E$ with $\bideg e=(u,v)$ \[(\e^X_n-u) \cdot e=0 \quad \mbox{and} \quad (\e^Y_m-u) \cdot e=0.\]  

We further define the following extended class of bigraded $A_{n,m}(C)$-modules. 

\begin{definition}\label{genEu_bi}
A bigraded $A_{n,m}(C)$-module $E$ is said to be {\it generalized Eulerian} if for each bihomogeneous element $e$ of $E$ with $\bideg e=(u,v)$, there exists an integer $a>0$ (depending on $e$) such that
\[(\mathcal{E}^X_n-u)^a \cdot e=0 \quad \mbox{ and } \quad (\mathcal{E}^Y_m-v)^a \cdot e=0.\]
\end{definition}
Note that bigraded generalized Eulerian $A_{1,1}(C)$-modules have already been introduced in \cite{TPSR24} to study ideals generated by monomials with coefficients (not necessarily units) in $C$.

\begin{remark}\label{Rel-bigrd-grd-genEur}
Consider both $R$ and $A_{n,m}(C)$ as standard graded with $\deg X_i=\deg Y_j=1$ and $\deg \partial_i=\deg \delta_j=-1$ for all $i=1, \ldots, m$ and $j=1, \ldots, n$. To distinguish, we denote $A_{n,m}(C)$ by $A_{m+n}(C)$ in this case. Given a bigraded $A_{n,m}(C)$-module $E$, we further consider the $A_{m+n}(C)$-graded module $\mbox{Tor}(E)$ defined by  $\mbox{Tor}(E)_r=\bigoplus_{u+v=r}M_{(u,v)}$ for every $r \in \mathbb{Z}$. Pick $e \in E_{(u,v)}$ such that $(\mathcal{E}^X_n-u)^a \cdot e=0$ and $(\mathcal{E}^Y_m-v)^a \cdot e=0$ for some integer $a>0$. Notice $\e_n^X-u$ and $\e_m^Y-v$ commutes with each other in $A_{n,m}(C)$. Hence \[\left(\e_n^X+\e_m^Y-(u+v)\right)^{2a} \cdot e=\left((\e_n^X-u)+(\e_m^Y-v)\right)^{2a} \cdot e=\sum_{i=0}^{2a} \binom{2a}{i}(\e_n^X-u)^{2a-i}(\e_m^Y-v)^i=0.\] This shows that if $E$ is a bigraded generalized Eulerian $A_{n,m}(C)$-module then $\mbox{Tot}(E)$ is a graded generalized Eulerian $A_{m+n}(C)$-module. 
\end{remark}

\begin{remark}\label{comp-grd-genEur}
Suppose that $M=\bigoplus_{(u,v) \in \mathbb{Z}^2}M_{(u,v)}$ is a bigraded $A_{n,m}(C)$-module. Set $R^X:=C[X_1, \ldots, X_n]$ and $R^Y:=C[Y_1,\ldots, Y_m]$. Let $R^X, R^Y$ be standard graded with $\deg X_i=1=\deg Y_j$, $\deg c=0$ for all $c \in C$. Fix $(u_0,v_0)\in \Z^2$. Write $M_{u_0, *}:=\bigoplus_{v \in \Z} M_{(u_0,v)}$ and $M_{*,v_0}:=\bigoplus_{u \in \Z} M_{(u,v_0)}$. Clearly $M_{u_0, *}$ and $M_{*,v_0}$ are graded $R^Y$ and $R^X$-modules, respectively. Suppose that $M$ is a bigraded generalized Eulerain $A_{n,m}(C)$-module. Then by definition for each $e_1 \in M_{u_0,v}$ (resp. $e_2 \in M_{u,v_0}$) we have $(\mathcal{E}^Y_m-v)^a \cdot e_1=0$ (resp. $(\mathcal{E}^X_n-u)^a \cdot e_2=0$) for some $a \geq 1$. Thus $M_{*,v_0}$ (resp. $M_{u_0,*}$) is a graded generalized Eulerian $A^X_n(C)=R^X\langle \partial_1, \ldots, \partial_n\rangle$-module (resp. $A_m^Y(C)=R^Y\langle \delta_1, \ldots, \delta_m\rangle$-module).
\end{remark}

The following result, asserting that the class of bigraded generalized Eulerian modules is closed under extensions, is crucial to this study. As it can be proved along the same lines as \cite[Proposition 2.1]{TP1}, we omit the proof.

\begin{proposition}\label{exact}
	Let $0 \rt M_1 \xrightarrow{\alpha_1} M_2 \xrightarrow{\alpha_2} M_3 \rt 0$ be a short exact sequence of bigraded $A_{n,m}(C)$-modules (all maps are bihomogeneous).
	Then the following are equivalent:
	\begin{enumerate}[\rm (1)]
		\item
		$M_2$ is bigraded generalized Eulerian.
		\item
		$M_1$ and $M_3$ are bigraded generalized Eulerian.
	\end{enumerate}
\end{proposition}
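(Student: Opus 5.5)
The plan is to imitate the argument of \cite[Proposition 2.1]{TP1}, treating the two Euler operators separately. Since bigraded generalized Eulerian is a condition on each of $\e^X_n$ and $\e^Y_m$ individually, it is enough to verify each condition for one operator, say $\e^X_n$ with the $u$-component of the degree; the $Y$-condition is handled by the same argument. If $e$ needs exponent $a$ for $X$ and $b$ for $Y$, we then take $\max(a,b)$. The one fact we use throughout is that the maps $\alpha_1,\alpha_2$ are bihomogeneous $A_{n,m}(C)$-linear maps, so they commute with $\e^X_n-u$ and $\e^Y_m-v$ and preserve bidegrees.

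\textbf{(1) $\Rightarrow$ (2).} For $M_1$: take a bihomogeneous $e\in M_1$ of bidegree $(u,v)$. Then $\alpha_1(e)\in M_2$ is bihomogeneous of bidegree $(u,v)$, so $(\e^X_n-u)^a\alpha_1(e)=0$ for some $a>0$. This gives $\alpha_1\big((\e^X_n-u)^a e\big)=0$, and injectivity of $\alpha_1$ yields $(\e^X_n-u)^a e=0$.

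For $M_3$: a bihomogeneous $e\in M_3$ of bidegree $(u,v)$ has a preimage under $\alpha_2$, and we may take the bihomogeneous component of bidegree $(u,v)$ of that preimage, using that $\alpha_2$ is bihomogeneous. Call this component $e'$. Applying $\alpha_2$ to $(\e^X_n-u)^a e'=0$ gives the condition for $e$.

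\textbf{(2) $\Rightarrow$ (1).} Let $e\in M_2$ be bihomogeneous of bidegree $(u,v)$. Since $M_3$ is generalized Eulerian, there is $a$ with $(\e^X_n-u)^a\alpha_2(e)=0$. Hence $(\e^X_n-u)^a e\in\ker\alpha_2=\operatorname{im}\alpha_1$, so $(\e^X_n-u)^a e=\alpha_1(f)$ for some $f\in M_1$. Because $\e^X_n$ has bidegree $(0,0)$, the element $(\e^X_n-u)^a e$ is bihomogeneous of bidegree $(u,v)$. Since $\alpha_1$ is injective and bihomogeneous, $f$ is then bihomogeneous of bidegree $(u,v)$. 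As $M_1$ is generalized Eulerian, there is $b$ with $(\e^X_n-u)^b f=0$. Therefore
\[\alpha_1\big((\e^X_n-u)^b f\big)=(\e^X_n-u)^{a+b}e=0.\]
The same argument with $\e^Y_m-v$ gives an exponent $a'+b'$, and taking the larger of the two exponents finishes the proof.

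The only delicate point is the bookkeeping of degrees. We need preimages and the element $f$ to be bihomogeneous of the same bidegree $(u,v)$. Otherwise the relevant Euler shift for $f$ would be different from $u$ (respectively $v$), and the exponents could not be combined. This is ensured precisely because the Euler operators have bidegree $(0,0)$ and all the maps in the sequence are bihomogeneous.
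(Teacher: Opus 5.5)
Your proof is correct and is essentially the argument the paper intends. The paper omits the proof and refers to \cite[Proposition 2.1]{TP1}, which is this same element chase: injectivity of $\alpha_1$ for $M_1$, bihomogeneous lifts for $M_3$, and the exponent $a+b$ for the extension. You run it separately for $\e^X_n$ and $\e^Y_m$, which works because both operators have bidegree $(0,0)$.
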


Following the same methods as in \cite[Propositions 3.2 and 3.5]{TP1} and \cite[Propositions 5.3 and 5.5]{TPJS} (see also \cite[Propositions 3.4, 3.5, 3.6, and 3.7]{TPSR}), one can establish the following results.

\begin{proposition}\label{bigrd-genEur-XY}
Let $M$ be a bigraded generalized Eulerian $A_{n,m}(C)$-module. Then for $i=0,1$, the $A_{n-1,m}(C)$-module $H_i(X_n; M)$ and the $A_{n,m-1}(C)$-module $H_i(Y_m; M)$ are bigraded generalized Eulerain. 
\end{proposition}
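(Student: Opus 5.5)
By symmetry between the $X$ and $Y$ variables it suffices to treat $X_n$. We view $H_i(X_n;M)$ through the Koszul complex $0\to M(-1,0)\xrightarrow{X_n} M\to 0$, so that $H_1(X_n;M)=\ker(X_n\colon M(-1,0)\to M)$ and $H_0(X_n;M)=M/X_nM$. Both are bigraded, and both are modules over $A_{n-1,m}(C)=C[X_1,\dots,X_{n-1},Y_1,\dots,Y_m]\langle\partial_1,\dots,\partial_{n-1},\delta_1,\dots,\delta_m\rangle$. Indeed, every generator of $A_{n-1,m}(C)$ commutes with $X_n$, so it preserves $\ker X_n$ and $X_nM$. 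The relevant Euler operators on these modules are $\mathcal{E}^X_{n-1}=\sum_{i<n}X_i\partial_i$ and $\mathcal{E}^Y_m$.

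The $Y$-condition is immediate. Since $\mathcal{E}^Y_m$ commutes with $X_n$, it acts on $\ker X_n$ and on $M/X_nM$ exactly as on $M$. For $e$ of bidegree $(u,v)$ the relation $(\mathcal{E}^Y_m-v)^a e=0$ therefore passes to the subquotient. For $H_1$ we must track the shift: an element of $H_1(X_n;M)_{(u,v)}$ is an element of $M_{(u-1,v)}$, and the $Y$-degree is unchanged, so no correction is needed.

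The main work is the $X$-condition, where we write $\mathcal{E}^X_n=\mathcal{E}^X_{n-1}+X_n\partial_n$.

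For $H_1$: take $e\in M_{(u-1,v)}$ with $X_ne=0$, so that its bidegree in $H_1(X_n;M)$ is $(u,v)$. Since $X_n\partial_n=\partial_nX_n-1$, we get $X_n\partial_n e=-e$, and hence $\mathcal{E}^X_{n-1}e=(\mathcal{E}^X_n+1)e$. The operators $\mathcal{E}^X_{n-1}$ and $X_n\partial_n$ commute, and $\ker X_n$ is stable under $\mathcal{E}^X_n$ because $[\mathcal{E}^X_n,X_n]=X_n$. It follows that $(\mathcal{E}^X_{n-1}-u)^a e=(\mathcal{E}^X_n-(u-1))^a e=0$, using the generalized Eulerian property of $e\in M_{(u-1,v)}$.

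For $H_0$: take $\bar e\in (M/X_nM)_{(u,v)}$ with lift $e\in M_{(u,v)}$. Then $X_n\partial_n e\in X_nM$, so on $M/X_nM$ the operator $\mathcal{E}^X_{n-1}$ agrees with $\mathcal{E}^X_n$. More precisely, $(\mathcal{E}^X_{n-1}-u)^a e\equiv(\mathcal{E}^X_n-u)^a e \pmod{X_nM}$. This holds because $X_nM$ is $\mathcal{E}^X_{n-1}$-stable and $\mathcal{E}^X_{n-1}$ commutes with $X_n\partial_n$. Expanding $((\mathcal{E}^X_n-u)-X_n\partial_n)^a$ binomially, every term containing $X_n\partial_n$ lies in $X_nM$. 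Hence $(\mathcal{E}^X_{n-1}-u)^a\bar e=0$.

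The only delicate points are the degree shift and the sign in the $H_1$ case, and checking these commutation relations carefully, as in \cite[Proposition 3.2]{TP1}. The argument for $Y_m$ is identical, with $\delta_m$ in place of $\partial_n$.
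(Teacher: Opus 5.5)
Your proof is correct and follows essentially the route the paper intends: the paper omits the argument and defers to \cite[Proposition 3.2]{TP1} and \cite[Proposition 5.3]{TPJS}. Their method is the same direct computation via $\mathcal{E}^X_n=\mathcal{E}^X_{n-1}+X_n\partial_n$, which gives $\mathcal{E}^X_{n-1}=\mathcal{E}^X_n+1$ on $\ker X_n$ and $\mathcal{E}^X_{n-1}\equiv\mathcal{E}^X_n$ modulo $X_nM$; as you do, it is applied to one Euler operator at a time, while the other passes through unchanged because it commutes with $X_n$ (resp.\ $Y_m$).
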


\begin{proposition}\label{bigrd-genEur-partial}
Let $M$ be a bigraded generalized Eulerian $A_{n,m}(C)$-module. Then for $i=0,1$, the $A_{n-1,m}(C)$-module $H_i(\partial_n; M)(-1,0)$ and the $A_{n,m-1}(C)$-module $H_i(\delta_m; M)(0,-1)$ are bigraded generalized Eulerain . 
\end{proposition}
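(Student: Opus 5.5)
We prove the claim for $\partial_n$; the case of $\delta_m$ is symmetric, with the roles of $X$ and $Y$ exchanged and the shift $(0,-1)$ replacing $(-1,0)$. Recall that $H_1(\partial_n;M)=\ker(\partial_n\colon M(1,0)\to M)$ and $H_0(\partial_n;M)=M/\partial_n M$. These are bigraded modules over $A_{n-1,m}(C)$, the Weyl algebra in $X_1,\dots,X_{n-1},Y_1,\dots,Y_m$ over $C$, because all of these variables and derivations commute with $\partial_n$. The plan is to check the two nilpotency conditions of Definition~\ref{genEu_bi} directly, using the Euler operators $\e^X_{n-1}=\sum_{i<n}X_i\partial_i$ and $\e^Y_m$. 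The $Y$-condition should be immediate. The $X$-condition has to absorb the term $X_n\partial_n$, and this is where the shift $(-1,0)$ comes from.

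\textbf{The $Y$-direction.} The operator $\e^Y_m$ commutes with $\partial_n$. It therefore acts on both the kernel and the cokernel of $\partial_n$, and it preserves the $Y$-degree $v$. For a bihomogeneous class of $Y$-degree $v$, take a bihomogeneous representative $e\in M$. The relation $(\e^Y_m-v)^a e=0$ in $M$ then descends to the class. A shift in the $X$-degree does not alter $v$, so this part is trivial.

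\textbf{The $X$-direction.} The key identity is $\e^X_n=\e^X_{n-1}+X_n\partial_n=\e^X_{n-1}+\partial_nX_n-1$.

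For $H_1$, take $e\in M_{(u,v)}$ with $\partial_n e=0$. Then $X_n\partial_n e=0$. Since $\e^X_{n-1}$ commutes with $\partial_n$, the kernel is $\e^X_{n-1}$-stable, and on it $\e^X_n$ acts as $\e^X_{n-1}$. Hence $(\e^X_{n-1}-u)^a e=0$. In the shifted module $H_1(\partial_n;M)(-1,0)$ the element $e$ has degree $(u+1,v)$, if one uses the paper's convention that $\ker\partial_n\subseteq M(1,0)$ is then shifted by $(-1,0)$. I need to fix the sign conventions so that the stated twist makes the degree come out as $u$. This bookkeeping is the part to check carefully against \cite[Prop.~3.5]{TP1}.

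For $H_0$, take the class $\bar e$ of $e\in M_{(u,v)}$ in $M/\partial_nM$. Here $\partial_nX_n e\in\partial_nM$, so $\e^X_n$ acts on the cokernel as $\e^X_{n-1}-1$. Consequently $(\e^X_{n-1}-(u+1))^a\bar e=0$. The degree $u+1$ is exactly what the twist $(-1,0)$ turns into the degree recorded for $\bar e$.

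In both cases we also need that the relevant operators preserve $\partial_nM$, respectively $\ker\partial_n$, so that the powers make sense on the quotient, respectively the submodule. This holds because $X_n\partial_n$ maps $\partial_nM$ into itself and $\e^X_{n-1}$ commutes with $\partial_n$.

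\textbf{Expected obstacle.} The main difficulty is the degree bookkeeping: reconciling the shift $(-1,0)$ with the convention used for Koszul homology, so that $H_0$ and $H_1$ both satisfy the Eulerian condition with the same twist. The remaining steps are formal. If the conventions turn out to give different twists for $H_0$ and $H_1$, I would first write out the complex $0\to M(1,0)\xrightarrow{\partial_n}M\to0$ explicitly, following \cite{TPJS}, and read both homology degrees off that single complex.
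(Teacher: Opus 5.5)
Your approach is the one the paper intends. The paper gives no proof of its own and defers to the direct Euler-operator computation of \cite[Proposition 3.5]{TP1}. That computation is exactly your decomposition $\mathcal{E}^X_n=\mathcal{E}^X_{n-1}+X_n\partial_n=\mathcal{E}^X_{n-1}+\partial_nX_n-1$, applied to the kernel and cokernel of $\partial_n$, with the $Y$-condition passing through untouched. Your $H_0$ computation is complete as written.

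In the $H_1$ case, however, you leave the degree bookkeeping open, and the degree you do state is wrong. The paper's convention is visible in the exact sequences used for $\mathcal{NW^*}$ in the proof of Theorem~\ref{bi-rigid}: there $H_1(\partial_n;M)_{(u,v)}=\ker\bigl(\partial_n\colon M_{(u+1,v)}\to M_{(u,v)}\bigr)$, so $H_1(\partial_n;M)\subseteq M(1,0)$. Hence an element $e\in M_{(u,v)}$ with $\partial_n e=0$ lies in $H_1(\partial_n;M)_{(u-1,v)}$. It therefore sits in degree $(u,v)$ of $H_1(\partial_n;M)(-1,0)$, not in degree $(u+1,v)$ as you claim. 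Degree $(u,v)$ is exactly what your identity $(\mathcal{E}^X_{n-1}-u)^a e=0$ requires.

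So the single complex $0\to M(1,0)\xrightarrow{\partial_n}M\to 0$ gives the same twist $(-1,0)$ for both homologies, and the obstacle you anticipate does not arise. With that line corrected, your proof is complete.
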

\begin{proposition}\label{bigrd-genEur-partial-con}
Let $M$ be a bigraded generalized Eulerian $A_{n,m}(C)$-module. For $n=1$, $H_i(\partial_1; M)_{(u,v)} \neq 0$ only if $u=-1$ and for $m=1$, $H_i(\delta_1; M)_{(u,v)} \neq 0$ only if $v=-1$ where $i=0,1$. 
\end{proposition}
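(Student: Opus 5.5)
By symmetry it suffices to treat $H_i(\partial_1;M)$ for $n=1$; the statement for $H_i(\delta_1;M)$ with $m=1$ follows by exchanging the roles of the $X$- and $Y$-variables. So assume $n=1$. Then $\mathcal{E}^X_1 = X_1\partial_1$, and the relation $\partial_1 X_1 = X_1\partial_1 + 1$ gives $\partial_1 X_1 = \mathcal{E}^X_1 + 1$. The plan is to use this identity in two different ways, one for the cokernel and one for the kernel of $\partial_1$ on $M$.

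\textbf{Case $i=0$.} Here $H_0(\partial_1;M) = M/\partial_1 M$. Take a bihomogeneous $e \in M_{(u,v)}$ and choose $a>0$ with $(\mathcal{E}^X_1-u)^a e = 0$. In $M/\partial_1 M$ we have $(\mathcal{E}^X_1+1)\bar{e} = \overline{\partial_1 X_1 e} = 0$. Write $\mathcal{E}^X_1 - u = (\mathcal{E}^X_1+1) - (u+1)$ and expand $(\mathcal{E}^X_1-u)^a$ binomially. Every term containing a positive power of $\mathcal{E}^X_1+1$ is of the form $\partial_1 X_1(\cdots)e$, so it lies in $\partial_1 M$. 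Hence
\[
0 = \overline{(\mathcal{E}^X_1-u)^a e} = (-(u+1))^a\,\bar{e}.
\]
Since $K$ has characteristic zero, $u+1$ is invertible whenever $u \neq -1$, and then $\bar{e}=0$. Thus $H_0(\partial_1;M)_{(u,v)}=0$ unless $u=-1$.

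\textbf{Case $i=1$.} Here $H_1(\partial_1;M) = \ker(\partial_1 : M \to M(-1,0))$. Take a bihomogeneous $e \in M_{(u,v)}$ with $\partial_1 e = 0$. Then $\mathcal{E}^X_1 e = X_1\partial_1 e = 0$. With $a$ as above,
\[
0 = (\mathcal{E}^X_1-u)^a e = (-u)^a e,
\]
so $e=0$ unless $u=0$. In the module $H_1(\partial_1;M)$ itself the nonzero components therefore occur only in $X$-degree $0$. The grading on $H_i(\partial_1;M)$ has to be read with the shift $(-1,0)$ of Proposition \ref{bigrd-genEur-partial}, i.e.\ as the kernel of $M(1,0)\to M$, exactly as in the graded case \cite[Proposition 3.5]{TP1}. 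Under that convention the degree $u=0$ becomes $u=-1$.

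The only real obstacle is this grading bookkeeping for $H_1$. I will fix the convention so that it agrees with Proposition \ref{bigrd-genEur-partial} and \cite{TP1}, and then check that both cases yield the same value $u=-1$. The underlying mechanism is already complete: in characteristic zero, $(-(u+1))^a$ and $(-u)^a$ are units in $C$ whenever they are nonzero integers.
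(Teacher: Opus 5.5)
Your proof is correct. It is essentially the argument the paper defers to in \cite{TP1} and \cite{TPSR}: $\mathcal{E}^X_1=X_1\partial_1$ kills $\ker\partial_1$, while $\partial_1X_1=\mathcal{E}^X_1+1$ gives $(u+1)^a e\in\partial_1 M$ for $e\in M_{(u,v)}$, and nonzero integers are invertible in characteristic zero.

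The grading is not a convention you get to choose. It is fixed by \eqref{tm1a}: both homologies come from the single complex $0\to M(1,0)\xrightarrow{\partial_1}M\to 0$, so $H_1(\partial_1;M)_{(u,v)}=\ker(\partial_1\colon M_{(u+1,v)}\to M_{(u,v)})$. Under this complex both of your computations give $u=-1$. This is distinct from, though consistent with, the $(-1,0)$ twist in Proposition \ref{bigrd-genEur-partial}.
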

\begin{proposition}\label{bigrd-genEur-XY-con}
Let $M$ be a bigraded generalized Eulerian $A_{n,m}(C)$-module. For $n=1$, $H_i(X_1; M)_{(u,v)} \neq 0$ only if $u=0$ and for $m=1$, $H_i(Y_1; M)_{(u,v)} \neq 0$ only if $v=0$ where $i=0,1$. 
\end{proposition}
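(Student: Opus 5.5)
We prove the statement for $X_1$ when $n=1$; the case of $Y_1$ with $m=1$ is symmetric, exchanging the roles of $\e^X_n$ and $\e^Y_m$. The plan is to compute $\e^X_1=X_1\partial_1$ on the Koszul homology directly, using only the commutation relation $\partial_1X_1=X_1\partial_1+1$.

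Recall that $H_1(X_1;M)=\ker(X_1\colon M(-1,0)\to M)$ and $H_0(X_1;M)=M/X_1M$. Both are bigraded, and we index components by the bidegree $(u,v)$ of the underlying elements in $M$.

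\emph{The module $H_1(X_1;M)$.} Let $e\in M_{(u,v)}$ be bihomogeneous with $X_1e=0$. Then $\e^X_1 e = X_1\partial_1 e$. On the other hand, $X_1\partial_1 e=(\partial_1X_1-1)e=-e$. Hence $(\e^X_1-u)e=-(u+1)e$. Since $e$ is annihilated by some $(\e^X_1-u)^a$, we get $(u+1)^a e=0$ for some $a\ge 1$. As $C$ contains the field $K$ of characteristic zero, a nonzero integer is a unit, so $e=0$ unless $u=-1$. This is the case $u=0$ of the statement once we pass to the shifted module $H_1(X_1;M)=\ker(X_1)(-1,0)$: the component of bidegree $-1$ in $M$ sits in degree $0$ after the shift. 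This is precisely the indexing in which Proposition \ref{bigrd-genEur-XY} asserts that the module is generalized Eulerian.

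\emph{The module $H_0(X_1;M)$.} Take $e\in M_{(u,v)}$. Then $\e^X_1e=X_1\partial_1e\in X_1M$, so $\e^X_1$ acts as zero on $M/X_1M$. Hence $(\e^X_1-u)^a\bar e=(-u)^a\bar e$ in $M/X_1M$. Since $(\e^X_1-u)^a e=0$ in $M$, we get $(-u)^a\bar e=0$, so $\bar e=0$ unless $u=0$, again because integers are invertible in $C$.

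The only delicate point is the bookkeeping of the shift $(-1,0)$ in the definition of $H_1$, which must match the convention of Proposition \ref{bigrd-genEur-XY}. The rest is the one-line commutator computation $\partial_1X_1=X_1\partial_1+1$ combined with invertibility of nonzero integers in $C$.
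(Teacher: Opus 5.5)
Your proof is correct and is essentially the argument the paper intends. The paper only refers to the graded-case proofs in \cite{TP1} and \cite{TPJS}, which are exactly this direct computation: $X_1\partial_1=\partial_1X_1-1$ acts as $-1$ on $\ker X_1$, and $\mathcal{E}^X_1=X_1\partial_1$ maps $M$ into $X_1M$. Your shift convention $H_1(X_1;M)_{(u,v)}\subseteq M_{(u-1,v)}$ also agrees with the exact sequences the paper uses later in the rigidity and finiteness arguments.
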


\begin{remark}
Suppose $n>1$. Then using similar arguments as in \cite[Theorem 5.4]{TPJS}, we can prove that for each $j \geq 0$, the de Rahm homology module $H_j(X_i, X_{i+1}, \ldots, X_n; M)$ is a bigraded generalized Eulerian $A_{i-1, m}(C)$-module. In particular, $N^{(j)}:= H_j(X_2, X_3, \ldots, X_n; M)$ is a bigraded generalized Eulerian $A_{1, m}=C\langle X_1, Y_1, \ldots, Y_m, \partial_1, \delta_1, \ldots, \delta_m\rangle$-module. Fix $j \geq 1$. For $i=0,1$, by Proposition \ref{bigrd-genEur-XY-con},
\[H_i(X_1; N^{(j)})_{(0,v)} \neq 0 \quad \mbox{ and } \quad H_i(X_1; N^{(j)})_{(u,v)} = 0 \quad \mbox{ for all } u \neq 0.\] 

\noindent
\begin{minipage}{0.625\textheight}
Set 
$\underline{X}:=X_1, X_2, \ldots, X_n$. Then from the exact sequence 
\[0 \to H_0(X_1; N^{(j)}) \to H_j(\underline{X}; M) \to H_1(X_1, N^{(j-1)}) \to 0 \]
of graded $R=C[X_1, Y_1, \ldots, Y_m]$-modules (see \cite[Lemma 2.3]{TPJS}), we get $H_j(\underline{X}; M)_{(u,v)} \neq 0$ only if $u=0$. Thus, one may consider $W^{(j)}:=H_j(\underline{X}; M)=\bigoplus_{v \in \Z} \big(W^{(j)}\big)_{*,v}$ as a graded $A_m(C)=C\langle Y_1, \ldots, Y_m,$ $ \delta_1, \ldots, \delta_m\rangle$-module with $\big(W^{(j)}\big)_{*,v}=H_j(\underline{X}; M)_{(0,v)}$. The nonzero graded pieces are shown in Figure $(i)$. Given Proposition \ref{bigrd-genEur-partial-con}, the preceding arguments extend to $H_j(\underline{\partial}; M)$, where
\end{minipage}
\begin{minipage}{0.115\textheight}
\begin{center}
	\begin{tikzpicture}[scale=0.115]
	\draw[->] (-7.5,0)--(8.5,0) node[below]{$u$};
	\draw[->] (0,-7.5)--(0,8.5) node[above]{$v$};
	\draw[-, line width=0.25mm, blue] (0,-7.5)--(0,8);
	\node at (-0.25,-10) {\text{\small Figure} $(i)$};
	\node at (-6.25,5) {\scriptsize $H_j(\underline{X}; M)$};
	\end{tikzpicture}
\end{center}
\end{minipage}

\noindent
$\underline{\partial}=\partial_1, \ldots, \partial_n$. 
Thus
$W^{(j)}:=H_j(\underline{\partial}; M)=\bigoplus_{v \in \Z} \big(W^{(j)}\big)_{*,v}$ is a graded $A_m(C)$-module with $\big(W^{(j)}\big)_{*,v}=H_j(\underline{\partial}; M)_{(-1,v)}$.
\end{remark}

\section{Bigraded Lyubeznik functors}\label{mul-Lyubeznik}

{\normalfont Let $\TT$ be a graded Lyubeznik functor on ${}^*\Mod(R)$. To prove $\TT(R)$ is graded generalized Eulerian, in \cite{TP2}, the second author considered a general setup which enables us to prove that $\TT(R)$ is biigraded generalized Eulerian when $\TT$ is a bigraded Lyubeznik functor on ${}^*\Mod(R)$. We first recall the setup.
	
	\vspace{0.15cm}
	\noindent
	{\it Setup}: Let $C$ be a commutative Noetherian ring containing a field $K$ of characteristic zero. Assume that there is a ring $\Lambda$ (not necessarily commutative) containing $C$ such that $K \subseteq Z(\Lambda)$, the center of $\Lambda$. Furthermore assume that
	\begin{enumerate}[\rm (i)]
		\item $\Lambda$ is free both as a left $C$-module and as a right $C$-module.
		\item $N = C$ is a left $\Lambda$-module such that if we restrict the $\Lambda$ action on $N$ to $C$ we get the usual
		action of $C$ on $N$.
	\end{enumerate}
	Set $R = C[X_1,\ldots, X_n]$ and $\Gamma = \Lambda[X_1,\ldots, X_n]$. Consider $R$ as graded with $\deg C = 0$ and $\deg X_i = 1$ and $\Gamma$ as graded with $\deg \Lambda = 0$ and $\deg X_i = 1$ for all $i$. Also assume that for each homogeneous ideal $I$ of $R$ and a graded $\Gamma$-module $L$, the set $H^0_I(L)$ is a graded $\Gamma$-submodule of $L$. Let $\D_n:=A_n(\Lambda)$ be the $n^{th}$-Weyl algebra over $\Lambda$. We consider $\D_n$ graded by assigning	$\deg \Lambda = 0$, $\deg X_i = 1$ and $\deg \partial_i = -1$. 
	
	Under the above setup the second author proved the following. 
	\begin{theorem}\cite[Theorem 3.6, Lemma 3.7]{TP2}\label{gen-eul}
		Let $\TT$ be a graded Lyubeznik functor on ${}^* Mod(R)$. Then $\TT(R)$ is a graded generalized Eulerian $\D_n$-module.
	\end{theorem}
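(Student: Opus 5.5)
The plan is to argue by induction on the length of the Lyubeznik functor, building it up one elementary step at a time. Recall that a graded Lyubeznik functor is a composite $\TT=\TT_1\circ\cdots\circ\TT_t$. Each $\TT_j$ is one of the following:
\begin{enumerate}[\rm (a)]
\item a graded local cohomology functor $H^i_I(-)$ with $I$ homogeneous;
\item the kernel, image or cokernel of a morphism in a long exact sequence of graded local cohomology functors $H^i_{I_1}(-)\to H^i_{I_2}(-)$ with $I_1\supseteq I_2$;
\item a localization at a homogeneous element.
\end{enumerate}

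\textbf{Base case.} First show that $R$ itself is a graded generalized Eulerian $\D_n$-module, where $\Lambda$ acts through the given left $\Lambda$-module structure on $N=C$ and the $X_i,\partial_i$ act as usual. For a homogeneous $f$ of degree $u$ we have $\mathcal{E}_n f=uf$, since $\Lambda$ commutes with the $X_i,\partial_i$. So $R$ is in fact Eulerian.

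\textbf{Inductive step.} We must show that each elementary functor sends graded generalized Eulerian $\D_n$-modules to graded generalized Eulerian $\D_n$-modules. Two points need checking.
\begin{enumerate}[\rm (1)]
\item \emph{The $\D_n$-structure.} Local cohomology should be computed through the Čech complex on homogeneous generators $f_1,\dots,f_s$ of $I$. Each term is a direct sum of localizations $M_{f}$. Since $\Lambda$ commutes with $X_i$, hence with $f$, and $\partial_i$ acts by the quotient rule, each $M_f$ is a graded $\D_n$-module and the Čech differentials are $\D_n$-linear. The assumption that $H^0_I$ of a graded $\Gamma$-module is a $\Gamma$-submodule guarantees compatibility of the $\Lambda$-actions. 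Consequently $H^i_I(M)$ and the connecting maps $H^i_{I_1}(M)\to H^i_{I_2}(M)$ are $\D_n$-linear. Kernels, images and cokernels of $\D_n$-linear maps between generalized Eulerian modules are then generalized Eulerian by the graded analogue of Proposition~\ref{exact} (closure under submodules, quotients and extensions, cf.\ \cite[Proposition 2.1]{TP1}).
\item \emph{Localization preserves the property.} This is the key computation and the step I expect to be the main obstacle. Let $f$ be homogeneous of degree $d$ and let $m\in M_w$ with $(\mathcal{E}_n-w)^a m=0$. Using $[\mathcal{E}_n,f^{-k}]=-kd f^{-k}$, one gets $(\mathcal{E}_n-(w-kd))\,(m/f^k)=\bigl((\mathcal{E}_n-w)m\bigr)/f^k$. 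Iterating gives $(\mathcal{E}_n-(w-kd))^a(m/f^k)=0$, which is exactly the generalized Eulerian condition in degree $w-kd$. Since every homogeneous element of $M_f$ is a sum of such fractions of the same degree, and powers of the operator kill each summand, $M_f$ is generalized Eulerian. The Čech terms are finite direct sums of such modules, and cohomology is a subquotient, so item (1) applies.
\end{enumerate}

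The care needed lies in checking that the $\Lambda$-action extends compatibly to $M_f$ and to the local cohomology; the Euler-operator computation itself is formal. Combining the base case with the inductive step proves that $\TT(R)$ is graded generalized Eulerian.
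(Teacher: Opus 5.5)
Your step (2) is correct; it is the computation behind Remark~\ref{localize_MGE}. Step (1), however, has a genuine gap: it never actually constructs the $\D_n$-module structure, i.e.\ the $\Lambda$-action, that the statement asserts.

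Your reason for $M_f$ being a $\D_n$-module is that ``$\Lambda$ commutes with $X_i$, hence with $f$''. This is false. Only $K$ is assumed central in $\Lambda$, and $\Lambda$ does not commute with $C$, while a homogeneous $f\in R$ has coefficients in $C$. The paper's own use of this result already shows the failure. In the proof of Theorem~\ref{Mul-LC} the base ring is $C[X_1,\dots,X_n]$, $\Lambda=C[X_1,\dots,X_n]\langle\partial_1,\dots,\partial_n\rangle$, and the polynomial variables are the $Y_j$. There $f=X_1Y_1$ is homogeneous, but $\partial_1f-f\partial_1=Y_1\neq 0$. In such concrete cases $\Lambda$ still acts on $M_f$, but through a quotient rule for differential operators. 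Nothing like that is available in the abstract Setup, where elements of $\Lambda$ are only known to act on $N=C$. Your argument also never uses that $\Lambda$ is free over $C$, and uses the $H^0_I$-hypothesis only as a slogan; these two hypotheses are exactly what the missing step needs.

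The missing idea is to build the structure from derived functors in the category of graded $\D_n$-modules. The Claim in the proof of Theorem~\ref{Mul-LC} is the bigraded analogue of the first step.
\begin{enumerate}
\item For every graded $\D_n$-module $M$, $H^0_I(M)$ is a $\D_n$-submodule. It is $\Gamma$-stable by hypothesis. It is $\partial_i$-stable because $f\partial_i=\partial_if-\partial_i(f)$ and $\partial_i(I^{s+1})\subseteq I^s$.
\item Since $\Lambda$ is free as a right $C$-module, $\D_n$ is free as a right $R$-module. The adjunction $\mathrm{Hom}_{\D_n}(\D_n\otimes_R(-),E)\cong\mathrm{Hom}_R(-,E)$ then shows that (graded) injective $\D_n$-modules are injective over $R$, hence $H^0_I$-acyclic.
\item Consequently $H^i_I(M)$ and the long exact sequences for $I_1\supseteq I_2$ can be computed from a $\D_n$-injective resolution. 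All terms then carry $\D_n$-structures and all maps are $\D_n$-linear.
\end{enumerate}

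Your localization identity should then be applied over $A_n(C)\subseteq\D_n$. There $C$ is central, and $M_f$ is a module by the quotient rule. You must also check that this \v{C}ech $A_n(C)$-structure agrees with the restriction of the derived-functor structure. This holds for two reasons:
\begin{itemize}
\item $\D_n$ is free as a right $A_n(C)$-module, so a $\D_n$-injective resolution is also an $A_n(C)$-injective resolution.
\item $A_n(C)$-injectives are $R$-injective. Hence the \v{C}ech $\delta$-functor on graded $A_n(C)$-modules is effaceable, therefore universal, and computes the derived functors of $H^0_I$.
\end{itemize}
Since $\mathcal{E}_n\in A_n(C)$, the generalized Eulerian property then passes through each step of $\TT$, and hence to $\TT(R)$.
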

	
For our subsequent study, we generalize the preceding result to the case $\Lambda = C$ and outline a proof.
\begin{theorem}\label{Mul-LC}
Let $C$ be a Noetherian ring containing a field of characteristic zero and $R=C[X_1, \ldots, X_n, Y_1, \ldots, Y_m]$ be a polynomial ring over $C$. Let $A_{m,n}(C)=R \langle \partial_1, \ldots, \partial_n, \delta_1, \ldots, \delta_m\rangle$ denote the bigraded $(n,m)^{th}$-Weyl algebra over $C$, where $\partial_i=\partial/\partial X_i$ and $\delta_j=\partial/\partial Y_j$. Consider both $R$ and $A_{m,n}(C)$ as standard $\Z^2$-graded with $\bideg (c)=(0,0)$ for all $c \in C$, $\bideg X_i=(1,0), \bideg Y_j=(0,1), \bideg \partial_i=-(1,0)$ and $\bideg \delta_i=-(0,1)$ for $i=1, \ldots, n$ and $j=1, \ldots, m$. Let $\mathcal{T}$ be a multigraded Lyubeznik functor on ${}^*\Mod(R)$. Then $\TT(R)$ is a multigraded generalized Eulerian $A_{n,m}(C)$-module.
\end{theorem}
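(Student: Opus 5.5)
We follow the strategy of \cite[Theorem 3.6, Lemma 3.7]{TP2}: show that $R$ is bigraded generalized Eulerian, and that each building block of a bigraded Lyubeznik functor preserves this property. Then induct on the number of functors composing $\TT$. Recall that $\TT=\TT_1\circ\cdots\circ\TT_s$, where each $\TT_k$ is one of the following: a bigraded local cohomology functor $H^{i}_{I}(-)$ for a bihomogeneous ideal $I$; the kernel, image or cokernel of an arrow in a long exact sequence $\cdots\to H^i_{I_1}(-)\to H^i_{I_2}(-)\to H^i_{I_2\setminus I_1}(-)\to\cdots$ for bihomogeneous $I_1\supseteq I_2$; or a localization at a homogeneous element of degree $(0,0)$ (i.e.\ at an element of $C$, or more generally at a multiplicatively closed set of bihomogeneous elements).

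\textbf{Base case.} For $R$ itself the Euler operators act diagonally on monomials. If $e=cX^{\alpha}Y^{\beta}$ with $|\alpha|=u$ and $|\beta|=v$, then $\e^X_n e=ue$ and $\e^Y_m e=ve$. So $R$ is bigraded Eulerian, hence bigraded generalized Eulerian, and it is naturally a bigraded $A_{n,m}(C)$-module.

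\textbf{Localization.} Let $f$ be bihomogeneous of bidegree $(a,b)$ and $M$ a bigraded generalized Eulerian module. The key identity is $(\e^X_n-u)\cdot(m/f^{k})=\big((\e^X_n-(u+ka))m\big)/f^{k}$ for $m$ of bidegree $(u+ka,\ast)$. This follows from $[\e^X_n,f^{-k}]=-ka f^{-k}$, which holds since $f$ is $X$-homogeneous of degree $a$; the same works for $\e^Y_m$. Iterating gives $(\e^X_n-u)^{N}(m/f^k)=((\e^X_n-u-ka)^N m)/f^k=0$ for suitable $N$. Hence $M_f$ is bigraded generalized Eulerian. Localization at elements of $C$ commutes with the Euler operators.

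\textbf{Local cohomology.} Write $I=(f_1,\dots,f_r)$ with $f_i$ bihomogeneous and compute $H^i_I(M)$ via the \v{C}ech complex $\check{C}^\bullet(f;M)$. This is a complex of bigraded $A_{n,m}(C)$-modules with bihomogeneous $A_{n,m}(C)$-linear maps. Each term is a finite direct sum of localizations $M_{f_{i_1}\cdots f_{i_t}}$, so each term is bigraded generalized Eulerian by the previous step. Proposition~\ref{exact} then shows that kernels, images and cohomology (subquotients) remain bigraded generalized Eulerian. The same argument handles the functors arising from the long exact sequence for $I_1\supseteq I_2$. Realize that sequence as the long exact sequence of cohomology of the short exact sequence of \v{C}ech-type complexes (the cone construction, with all maps $A_{n,m}(C)$-linear and bihomogeneous). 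Its kernels, images and cokernels are subquotients of generalized Eulerian modules, so Proposition~\ref{exact} applies again.

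\textbf{Main obstacle.} The main obstacle is verifying that every $\TT_k$, when applied to an $A_{n,m}(C)$-module, yields an $A_{n,m}(C)$-module in a way compatible with the bigrading and with the maps of the exact sequences. In other words, the natural $D$-module structure on $H^i_I(M)$ must coincide with the one computed by \v{C}ech complexes and be independent of the generators chosen. We would handle this exactly as in \cite[Lemma 3.7]{TP2}, using $\Lambda=C$ (so hypotheses (i), (ii) of the setup hold trivially), and simply check that all the constructions respect the $\Z^2$-grading rather than only the total $\Z$-grading. Once this is in place, induction on $s$ finishes the proof. Note that Remark~\ref{Rel-bigrd-grd-genEur} cannot be used in reverse, so the bigraded Euler identities must be checked directly as above.
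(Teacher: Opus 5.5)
Your argument is correct, but it takes a different route from the paper.

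\textbf{Your route.} You re-run the building-block proof of \cite{TP1}, \cite{TP2} directly in the bigraded setting:
\begin{enumerate}[\rm(1)]
\item $R$ is bigraded Eulerian.
\item Localization preserves the property via $[\e^X_n,f^{-k}]=-kaf^{-k}$. This is exactly the identity the paper records later, in Remark~\ref{localize_MGE}.
\item \v{C}ech cohomology, cones, and kernels/images/cokernels of the maps in the long exact sequences are then handled by Proposition~\ref{exact}.
\end{enumerate}

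\textbf{The paper's route.} The paper never touches the building blocks. It first checks that $H^0_I$ of an $A_{n,m}(C)$-module is an $A_{n,m}(C)$-submodule, which is the hypothesis needed in \cite[Setup 3.1]{TP2}. It then regards $R=R^X[Y_1,\ldots,Y_m]$ as graded only in the $Y$'s. The commutative base is $R^X=C[X_1,\ldots,X_n]$ and the non-commutative coefficient ring is $\Lambda=A_n(C)=R^X\langle\partial_1,\ldots,\partial_n\rangle$. With these choices $A_m(\Lambda)=A_{n,m}(C)$, and the Euler operator of \cite{TP2} is exactly $\e^Y_m$. So \cite[Theorem 3.6]{TP2} gives $(\e^Y_m-v)^a e=0$ for every $e$ of $Y$-degree $v$. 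Swapping the roles of $X$ and $Y$ gives the $X$-condition. Since the bigraded generalized Eulerian condition is just the conjunction of these two conditions, this finishes the proof.

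\textbf{On your closing remark.} It is only half right. The total grading of Remark~\ref{Rel-bigrd-grd-genEur} indeed cannot be used in reverse. The two partial gradings can, however, and that is what makes the paper's proof only a few lines long.

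\textbf{Trade-off.} The paper's proof is short and uses \cite{TP2} as a black box. The price is checking the TP2 setup for a non-commutative $\Lambda$: freeness of $A_n(C)$ over $R^X$, $R^X$ being an $A_n(C)$-module, and the $H^0_I$ claim. Your proof needs none of that coefficient-ring machinery and is self-contained as far as the Euler identities go, but it is longer. Both arguments defer the same structural point to \cite{TP2}: the well-definedness of the $A_{n,m}(C)$-structure on $\TT(R)$ and the linearity of the maps in the long exact sequences.
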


	\begin{proof}	
		Let $I$ be a bihomogeneous ideal in $R$ and $M$ a bigraded $A_{n,m}(C)$-module.
		
		\begin{claim*}
			$H_I^0(M)$ is a bigraded $A_{n,m}(C)$-submodule of $M$. 	
		\end{claim*} 
		
		Let $y \in H_I^0(M)$. Then $I^s y=0$ for some $s \geq 1$. Take $f \in I^{s+1}$. For any $g \in R$,
		\[\left(\partial_i f\right)(g)=\partial_i(fg)=f \partial_i(g)+g \partial_i(f) =\left(f \partial_i+\partial_i(f)\right)(g)\]
		and hence $f\partial_i= \partial_i f-\partial_i(f)$. As $\partial_i(f) \in I^s$ so $f \partial_i y=0$. Thus $\partial_i y \in H_I^0(M)$ for $i=1,\ldots, n$. Similarly, we can show that  $\delta_jy \in H_I^0(M)$ for $j=1,\ldots, m$. Note that $A_{n,m}(C)$ is a free $R$-module with a set of generators 
		\begin{equation*}
		\{\partial_1^{\alpha_1} \cdots \partial_n^{\alpha_n} \delta_1^{\beta_1}\cdots \delta_m^{\beta_m} \mid \alpha_{i}, \beta_j \geq 0 \mbox{ for } i=1, \ldots, n \mbox { and } j=1, \ldots, m\}.
		\end{equation*}
		The claim follows.
		
Using the same method as in \cite[3.10]{TP2}, one can easily prove that $\mathcal{T}(R)$ is a $\Z^2$-graded $A_{n,m}(C)$-module. We set $\mathcal{T}(R):=M= \bigoplus_{(u,v)\in \Z^2}M_{(u,v)}$. 
		
Set $S^X=R^X[Y_1, \ldots, Y_m]$ and $S^Y=R^Y[X_1, \ldots, X_n]$. Clearly, $R=S^X=S^Y$. Consider $S^X$ as graded with $\deg z=0$ for all $z \in R^X$ and $\deg Y_j=1$ for $j=1,\ldots, m$. Then $\mathcal{T}$ is a graded {\it Lyubeznik functor} on ${}^*\Mod(S^X)$. In view of \cite[Setup 3.1]{TP2} and by \cite[Theorem 3.6]{TP2} it follows that $\mathcal{T}(R)=\mathcal{T}(S^X)=\bigoplus_{v \in \Z} M_{*,v}$ is a graded generalized Eulerian $\mathcal{D}_m^X:=A_m\big(A_n(C)\big)=A_m\big(R^X\langle\partial_1, \ldots, \partial_n\rangle\big)$-module. Similarly, $\mathcal{T}(R)=\bigoplus_{u \in \Z} M_{u,*}$ is graded generalized Eulerian $\mathcal{D}_n^Y:=A_n\big(A_m(C)\big)=A_n\big(R^Y\langle\delta_1, \ldots, \delta_m\rangle\big)$-module if we consider $S^Y$ as graded with $\deg z=0$ for all $z \in R^Y$ and $\deg X_i=1$ for $i=1, \ldots, n$. Given $(u_0, v_0) \in \Z^2$, note that $M_{(u_0, *)}=\bigoplus_{v \in \Z} M_{(u_0,v)}$ and $M_{*,v_0}=\bigoplus_{u \in \Z} M_{(u,v_0)}$. Thus for all $e \in M_{(u_0,v_0)}$ there exists $a,b \in \Z_{>0}$ such that $(\mathcal{E}^Y_m-v_0)^a \cdot e=0$ and $(\mathcal{E}^X_n-u_0)^b \cdot e=0$. By definition it follows that $\mathcal{T}(R)$ is a bigraded generalized Eulerian $A_{n,m}(C)$-module.
	\end{proof}

	\section{Properties of components of bigraded generalized Eulerian modules}

	The purpose of this section is to discuss some properties of components of bigraded local cohomology modules. In light of Theorem \ref{Mul-LC}, more generally we study components of bigraded generalized Eulerian modules under the following setup.

	\s {\bf Standard assumptions:}\label{setup_bigraded}
Let $C$ be any Noetherian ring containing a field $K$ of characteristic zero and $R=C[X_1, \ldots, X_n, Y_1, \ldots, Y_n]$ be a polynomial ring over $C$. Let $A_{m,n}(C)=R \langle \partial_1, \ldots, \partial_n, \delta_1, \ldots, \delta_n\rangle$ denote the bigraded $(n,m)^{th}$-Weyl algebra over $C$, where $\partial_i=\partial/\partial X_i$ and $\delta_j=\partial/\partial Y_j$. Consider both $R$ and $A_{m,n}(C)$ as standard $\Z^2$-graded with $\bideg (c)=(0,0)$ for all $c \in C$, $\bideg X_i=(1,0), \bideg Y_j=(0,1), \bideg \partial_i=-(1,0)$ and $\bideg \delta_i=-(0,1)$ for $i=1, \ldots, m$ and $j=1, \ldots, n$. Let $M= \bigoplus_{(u,v)\in \Z^2}M_{(u,v)}$ be a bigraded generalized Eulerian $A_{m,n}(C)$-module. 

\vspace{0.15cm}
We restrict our attention to vanishing, tameness, and rigidity problems. Since these properties are known for graded case (see \cite{TP2}, \cite{TPSR}), we can use Remarks \ref{Rel-bigrd-grd-genEur} and \ref{comp-grd-genEur}.

\s \label{van_tem}{\bf Vanishing property.}\\
We  prove that vanishing of almost all bigraded components of countably generated bigraded module $M$ implies vanishing of $M$. We first prove the following lemma.
\begin{lemma}\label{lem1}
Let $M=\bigoplus_{(u,v)\in \Z^2}M_{(u,v)}$ be a countably generated multigraded  $R=\bigoplus_{(u,v)\in \N^2}R_{(u,v)}$-module. Then $M_{(u,v)}$ is a countably generated $R_{(0,0)}$-module for every $(u,v)\in \Z^2$.
\end{lemma}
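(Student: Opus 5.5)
We reduce to a statement about generators. Since $M$ is countably generated as an $R$-module, we may choose a countable generating set; each generator is a finite sum of bihomogeneous components, and replacing every generator by its (finitely many) nonzero bihomogeneous components still gives a countable set. Hence we may assume $M$ is generated by a countable family $\{m_k\}_{k\in\N}$ of bihomogeneous elements, with $\bideg m_k=(a_k,b_k)\in\Z^2$.

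Fix $(u,v)\in\Z^2$. Every element $x\in M_{(u,v)}$ can be written as $x=\sum_k r_k m_k$ with finitely many nonzero $r_k\in R$. Taking the $(u,v)$-bihomogeneous component of both sides, and using that each $m_k$ is bihomogeneous, we may replace each $r_k$ by its component of bidegree $(u-a_k,v-b_k)$. This component is zero unless $(u-a_k,v-b_k)\in\N^2$. Therefore
\[
M_{(u,v)}=\sum_{k\,:\,(a_k,b_k)\le (u,v)} R_{(u-a_k,\,v-b_k)}\, m_k .
\]

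Next we observe that each $R_{(p,q)}$, for $(p,q)\in\N^2$, is a finitely generated $R_{(0,0)}$-module. In our setting $R_{(0,0)}=C$, and $R_{(p,q)}$ is free over $C$ on the finitely many monomials $X^\alpha Y^\beta$ with $|\alpha|=p$ and $|\beta|=q$. For a general Noetherian $\N^2$-graded ring the same conclusion follows from $R$ being finitely generated over $R_{(0,0)}$ by bihomogeneous elements.

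Thus $M_{(u,v)}$ is generated over $R_{(0,0)}$ by the set
\[
\{\mu\, m_k \mid k\in\N,\ (a_k,b_k)\le(u,v),\ \mu \text{ a monomial of bidegree } (u-a_k,v-b_k)\}.
\]
This is a countable union of finite sets, hence countable, which proves the lemma.

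The only slightly delicate step is the passage to bihomogeneous generators together with the component-wise comparison of coefficients. Both are routine, so no real obstacle is expected.
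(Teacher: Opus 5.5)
Your proof is correct and is essentially the paper's argument. The paper restricts a surjection $\bigoplus_{i} R(-(u_i,v_i)) \to M$ to its $(u,v)$-strand $\bigoplus_i R_{(u-u_i,v-v_i)} \to M_{(u,v)}$, which is your element-wise computation; your explicit reduction to bihomogeneous generators fills in a step the paper silently assumes when it assigns a bidegree to each generator.
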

\begin{proof}
Suppose that $\{m_1, m_2, \ldots\}$ is a countable generating set of $M$ as an $R$-module with $\bideg m_i= (u_i,v_i) \in \Z^2$. We have an exact sequence of bigraded $R$-modules
\[\bigoplus_{i=1}^{\infty} R\big(-(u_i,v_i)\big) \to M \to 0,\] which induces an exact sequence of $R_{(0,0)}$-modules
\[\bigoplus_{i=1}^{\infty} R_{(u-u_i,v-v_i)} \to M_{(u,v)} \to 0\] for all $(u,v) \in \Z^2$. Since each $R_{(u-u_i,v-v_i)}$ is a finitely generated $R_{(0,0)}$-module, it follows that $M_{(u,v)}$ is a countably generated $R_{(0,0)}$-module.
\end{proof}

\begin{theorem}[Vanishing]\label{lem2}{\rm(}With hypotheses as in \ref{setup_bigraded}{\rm)}
Further assume that $K$ is a uncountable field and $M= \bigoplus_{u,v\in \Z}M_{u,v}$ is a countably generated bigraded $R$-module. Suppose $M_{(u,v)}= 0$ for every $(u,v) \notin S$, where $S$ is the shaded region in Figure $(i)$, then $M=0$.
\end{theorem}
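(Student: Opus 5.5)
The plan is to reduce the statement to the known vanishing theorem for \emph{graded} generalized Eulerian modules (cf.\ \cite{TP2}, \cite{TPSR}). I read the shaded region $S$ of Figure $(i)$ as bounded, i.e.\ contained in some box $[-a,a]\times[-b,b]$, so that $M_{(u,v)}=0$ whenever $|u|\gg 0$ or $|v|\gg 0$. The graded result I will invoke is: if $K$ is uncountable, $N$ is a countably generated graded generalized Eulerian module over a Weyl algebra, and $N_r=0$ for all $|r|\gg 0$, then $N=0$. Its proof rests on the generalized Eulerian condition: on $N_r$ the Euler operator acts with generalized eigenvalue $r$, and this forces the degree-$r$ components to be tied to the components in degrees $r\pm 1$ through the maps $X_i$ and $\partial_i$.

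First I pass from the bigraded setting to the graded one. I consider $\operatorname{Tot}(M)$, with $\operatorname{Tot}(M)_r=\bigoplus_{u+v=r}M_{(u,v)}$. By Remark \ref{Rel-bigrd-grd-genEur} it is a graded generalized Eulerian $A_{m+n}(C)$-module. It is countably generated as an $R$-module, since the same generators serve. It also satisfies $\operatorname{Tot}(M)_r=0$ for $|r|>a+b$, because any $(u,v)$ with $u+v=r$ and $|r|>a+b$ lies outside the box. The graded vanishing theorem then gives $\operatorname{Tot}(M)=0$, and hence $M=0$.

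There is a fallback if $S$ is instead only bounded in one direction, for example a horizontal or vertical strip, so that the total-degree argument does not apply. In that case I would use Remark \ref{comp-grd-genEur}. For each fixed $u_0$, the module $M_{u_0,*}$ is a graded generalized Eulerian $A^Y_m(C)$-module, and it vanishes in degrees $|v|\gg 0$. Each such row (or column) is a direct sum of countably many components $M_{(u_0,v)}$. By Lemma \ref{lem1} each component is a countably generated $R_{(0,0)}=C$-module, so the row is countably generated over $R^Y$, and the graded theorem kills it. Doing this row by row, or column by column, gives $M=0$.

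The main obstacle is verifying that the graded vanishing theorem is available in the generality needed, namely over an arbitrary Noetherian $C$ containing an uncountable field of characteristic zero, not only a regular $C$. I expect to use the uncountability of $K$ through the following device: countably many eigenvalue conditions cannot exhaust $K$. One chooses a generic $\lambda\in K$ and uses $\mathcal{E}-\lambda$ acting invertibly on the relevant countably generated pieces, so as to push nonzero elements into degrees where $M$ vanishes. If this step is not directly citable, I would reprove it along the lines of \cite{TPSR}, with Lemma \ref{lem1} supplying the countable generation of the components that the argument requires.
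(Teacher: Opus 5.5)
\textbf{The gap: the region $S$ is not a box.} The shaded region $S$ of Figure $(i)$ is the union of the vertical strip $\{-b_1\le u\le a_1\}$ and the horizontal strip $\{-b_2\le v\le a_2\}$, a cross. So the hypothesis only says $M_{(u,v)}=0$ when $u\notin[-b_1,a_1]$ \emph{and} $v\notin[-b_2,a_2]$. Your box version is a special case of this statement, not the statement itself.

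Under the actual hypothesis your main argument fails. For any $u_1\in[-b_1,a_1]$, the point $(u_1,r-u_1)$ lies in $S$ for every $r$. Hence nothing forces $\mathrm{Tot}(M)_r=0$ for $|r|\gg 0$, and the graded vanishing theorem cannot be applied to $\mathrm{Tot}(M)$.

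Your fallback does not cover the cross as written either. For $u_0\in[-b_1,a_1]$ the whole column $M_{u_0,*}$ lies in $S$, so it is not known to vanish for $|v|\gg 0$. Likewise, for $v_0\in[-b_2,a_2]$ the row $M_{*,v_0}$ is not known to vanish for $|u|\gg0$. So neither a single row-by-row pass nor a single column-by-column pass applies.

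\textbf{The fix: run the two passes in a fixed order.} This is exactly the paper's proof.
\begin{enumerate}
\item Take $v_0\notin[-b_2,a_2]$. The row $M_{*,v_0}$ meets $S$ only where $u\in[-b_1,a_1]$. It is therefore a countably generated graded generalized Eulerian $A_n(C)$-module vanishing for $|u|\gg 0$, and $M_{*,v_0}=0$ by \cite[Corollary 5.12]{TPSR}.
\item Now $M$ is supported in the horizontal strip $\mathcal{S}'$ of Figure $(ii)$. Every column $M_{u_0,*}$ then vanishes for $|v|\gg0$, and the same result kills it.
\end{enumerate}
Your countable-generation argument, via Lemma~\ref{lem1} applied to each component and Remark~\ref{comp-grd-genEur}, works for both passes.

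The graded vanishing result you flagged as the main obstacle is already available in the needed generality: arbitrary Noetherian $C$ containing an uncountable field of characteristic zero, and countably generated modules. The paper simply cites \cite[Corollary 5.12]{TPSR}, so no reproof via generic eigenvalues of the Euler operator is required.
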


\begin{center}
	\begin{tikzpicture}[scale=0.175]
	\draw[->] (-7.5,0)--(8.5,0) node[right]{$u$};
	\draw[red, densely dashdotted] (-7.5,-5.5)--(8.5,-5.5) node[right]{$v=v_0$};
	\draw[->] (0,-7.5)--(0,8.5) node[above]{$v$};
	\draw[line width=0.25mm,blue](-1.5,-7.25)-- (-1.5,7.25)node[left]{$u=-b_1$};
	\draw[blue](2,-7.25)-- (2,7.25) node[right]{$u=a_1$};
	\draw[blue](-7.5,-1.5)-- (7.5,-1.5) node[below]{$v=-b_2$};
	\draw[blue]  (-7.5,2)-- (7.5,2) node[above]{$v=a_2$};
	\draw[fill=cyan,fill opacity=0.35,draw=none] (-1.5,7.25)--(-1.5,-7.5)-- (2,-7.5) --(2,7.25);
	\draw[fill=cyan,fill opacity=0.35,draw=none] (7.5,-1.5)--(-7.5,-1.5)--(-7.5,2)-- (7.5,2);
	\node[draw=none] at (-1.5,1) {{\small $\mathcal{S}$}};
	\node at (0,-11) {Figure $(i)$};
	\end{tikzpicture}	
	\hspace{1cm}
	\begin{tikzpicture}[scale=0.175]\label{fig2}
	\draw[->] (-7.5,0)--(8.5,0) node[right]{$u$};
	\draw[->] (0,-7.5)--(0,8.5) node[above]{$v$};
	\draw[red, densely dashdotted] (-4.5,-7.5)--(-4.5,8.5)node[above]{$u=u_0$};
	\draw[blue](-7.5,-1.5)-- (7.5,-1.5) node[below]{$v=-b_2$};
	\draw[blue]  (-7.5,2)-- (7.5,2) node[above]{$v=a_2$};
	\draw[fill=cyan,fill opacity=0.5,draw=none] (7.5,-1.5)--(-7.5,-1.5)--(-7.5,2)-- (7.5,2);
	\node[draw=none] at (-2,1) {{\small $\mathcal{S}'$}};
	\node at (0,-11) {Figure $(ii)$};
	\end{tikzpicture}
\end{center}

\begin{proof}
Set $R^X:=C[X_1, \ldots, X_n]$ and put $R^Y=C[Y_1, \ldots, Y_m]$. Consider $R=S^X:=R^X[Y_1, \ldots, Y_m]$ as standard graded with $\deg z=0$ for all $z \in R^X$ and $\deg Y_j=1$ for $j=1, \ldots,m$. Then by \cite[Lemma 5.5]{TPSR}, $\mathcal{T}(S^X)=\bigoplus_{v\in \Z} M_{\star, v}$ is a countable generated graded $S^X$-module. Similarly consider $R=S^Y:=R^Y[X_1, \ldots X_n]$ as standard graded with $\deg z=0$ for all $z \in R^Y$, $\deg X_i=1$ for $i=1 \ldots n$. Then we get that $\mathcal{T}(S^Y)=\bigoplus_{ u\in \Z} M_{u, \star}$ is a countable generated graded $S^Y$-module. So by Lemma \ref{lem1}, $M_{\star, v}=\bigoplus_{ u\in \Z} M_{(u,v)}$ (resp. $M_{u, \star}=\bigoplus_{ v\in \Z} M_{(u,v)}$) is a countably generated $R^X$-module (resp. $R^Y$-module). Moreover, from Remark \ref{comp-grd-genEur} we get $M_{\star, v}$ (resp. $M_{u, \star}$) are graded generalized Eulerian $A_n(C)$-modules (resp. $A_m(C)$-modules). Fix $v \in \mathbb{Z}$. From \cite[Corollary 5.12]{TPSR} it follows that ``if $M_{\star, v} \neq 0$ then $(M_{\star, v})_u=M_{(u,v)} \neq 0$ for infinitely many $u \gg 0$ OR $M_{(u,v)} \neq 0$ for infinitely many $u \ll 0$". Therefore, for some $a,b \in \Z$ with $a<b$, if $M_{(u,v)}=0$ for all $u \notin [a,b]$, then $M_{\star, v}=0$. The same statement holds if we interchange $u$ and $v$.
	
We now consider the shaded region $\mathcal{S}$ in Figure (i) and let $M_{(u,v)}=0$ if $(u,v) \notin \mathcal{S}$. For each fixed $v_0> a_2$ or $v_0< -b_2$, as $(M_{*,v_0})_u=M_{(u,v_0)}=0$ for all $u \gg0$ and $u \ll 0$ so it follows that $M_{*,v_0}=0$. Now we have the following situation: $M_{(u,v)}=0$ if $(u,v) \notin \mathcal{S}'$, the new shaded part in Figure $(ii)$. Fix $u_0 \in \Z$. As $(M_{u_0, *})_v=M_{(u_0,v)}=0$ for all $v \gg0$ and $v \ll 0$ so we get that $M_{u_0, *}=0$. Hence $M_{(u,v)}=0$ for all $(u,v) \in \Z^2$. The result follows.  
\end{proof}

\s{\bf Tameness property.}\\ 
We now prove the following result which shows tameness property of $M$.

\begin{theorem}[Tameness]{\rm(}With hypotheses as in \ref{setup_bigraded}{\rm)}. If $M \neq 0$, then there exists at least one region $\mathcal{V}$ as in Figure (iv) such that $M_{(u,v)} \neq 0$ for all $(u,v) \in \mathcal{V}$.
\end{theorem}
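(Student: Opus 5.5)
The plan is to reduce to the graded case via Remark~\ref{comp-grd-genEur}, exactly as in the proof of Theorem~\ref{lem2}, and then import the known tameness result for graded generalized Eulerian modules (from \cite{TP2}, \cite{TPSR}). I read the regions $\mathcal{V}$ of Figure (iv) as horizontal or vertical half-lines. If instead they are blocks $\mathcal{B}$, the final step below becomes a two-step iteration, which I also describe.

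\textbf{Step 1 (choose a nonzero slice).} Since $M \neq 0$ and $M=\bigoplus_{(u,v)} M_{(u,v)}$, there is some $(u_0,v_0) \in \Z^2$ with $M_{(u_0,v_0)} \neq 0$. Then $M_{*,v_0}=\bigoplus_{u}M_{(u,v_0)}$ is a nonzero graded $R^X$-module. By Remark~\ref{comp-grd-genEur} it is a graded generalized Eulerian $A^X_n(C)$-module, and likewise $M_{u_0,*}$ is a nonzero graded generalized Eulerian $A^Y_m(C)$-module.

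\textbf{Step 2 (graded tameness on a row).} Apply the graded tameness theorem of \cite{TP2} (in the generality of \cite{TPSR}, with $C$ an arbitrary Noetherian ring containing a field of characteristic zero) to $N:=M_{*,v_0}$. As I recall it, that theorem says: if $N\neq 0$ is graded generalized Eulerian over $A_n(C)$, then either $N_u\neq 0$ for all $u\geq 0$ or $N_u\neq 0$ for all $u\leq -n$. This rests on the rigidity $N_u\neq 0 \Leftrightarrow N_{u'}\neq 0$ for $u,u'\geq 0$ (and similarly for $u,u'\leq -n$), proved via the Koszul/de Rham homology with respect to $\underline{X}$ and $\underline{\partial}$. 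This gives the horizontal half-line $\{(u,v_0): u\geq 0\}$ or $\{(u,v_0): u\leq -n\}$ on which $M$ is nonzero. The symmetric argument applied to $M_{u_0,*}$ gives a vertical half-line in $v\geq 0$ or $v\leq -m$.

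\textbf{Step 3 (if $\mathcal{V}$ must be a block).} If Figure (iv) requires a two-dimensional region, iterate. For each $u$ on the half-line found in Step 2, $M_{u,*}\neq 0$, so by graded tameness for $A^Y_m(C)$ each column is nonzero either on all $v\geq 0$ or on all $v\leq -m$. The obstacle is uniformity: the choice of direction might a priori depend on $u$. To resolve it I would use the graded rigidity in the $v$-direction, or pass to the generalized Eulerian module $H_j(\underline{X};M)$ from the Remark after Proposition~\ref{bigrd-genEur-XY-con}, which lets one compare columns. Failing that, a pigeonhole argument on the two possible directions gives infinitely many columns pointing the same way, and rigidity along rows then fills in the corresponding block $\mathfrak{B}_{(u_0,v_0)}$ among $\mathcal{NE},\mathcal{NW^*},\mathcal{S^*W^*},\mathcal{S^*E}$. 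I expect this uniformity step to be the main difficulty; Steps 1 and 2 are direct applications of results already available.
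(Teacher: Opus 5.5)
Your proof is correct once Step~3 is taken as the actual argument, but it follows a different route from the paper's. First, Figure~(iv) is a two-dimensional quadrant; the half-lines are Figure~(iii), an intermediate stage. So Steps~1--2 alone do not suffice.

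Second, the ``uniformity'' obstacle you flag is not real, because a single column is enough. Suppose row $v_0$ is nonzero for all $u\geq 0$, and take any $u_1\geq 0$. Graded tameness makes $M_{u_1,*}$ nonzero for all $v\geq 0$ (say). Then each row $M_{*,v}$ with $v\geq 0$ has the nonzero entry $M_{(u_1,v)}$ with $u_1\geq 0$. Graded rigidity \cite[Theorem 7.1]{TPSR} then gives $M_{(u,v)}\neq 0$ for all $u\geq 0$, i.e.\ on all of $\mathcal{NE}$. The other three cases give $\mathcal{S^*E}$, $\mathcal{NW^*}$ and $\mathcal{S^*W^*}$. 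Your pigeonhole sentence already contains this. Equivalently, every column $u\geq 0$ meets the nonzero row at the same index $v_0$, so the direction of propagation is dictated by $v_0$ and is automatically uniform.

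The paper avoids tameness and uses only the point-propagation statement \cite[Corollary 5.13]{TPSR}. From a nonzero $M_{(u_0,v_0)}$, the column $M_{u_0,*}$ is nonzero for all $v\geq v_0$ if $v_0\geq -m+1$ (for all $v\leq v_0$ if $v_0\leq -m$). Each row through $(u_0,v_1)$ then propagates in the direction fixed by $u_0$ alone, so uniformity never arises.

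The trade-off is as follows. The paper's argument uses one graded input and gives a pointwise statement: every nonzero component is the corner of a nonvanishing quadrant, oriented according to its position relative to $u=-n$ and $v=-m$. Yours needs tameness plus rigidity and lands on one of the four fixed blocks with corners $(0,0)$, $(-n,0)$, $(-n,-m)$, $(0,-m)$. That loses the anchoring at the original nonzero point, but it is exactly the form that matches Theorem~\ref{bi-rigid}.
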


	\begin{center}
	\begin{tikzpicture}[scale=0.175]
	\draw[->] (-7.5,0)--(9,0) node[right]{$u$};
	\draw[->] (0,-7.5)--(0,8.5) node[above]{$v$};
	\draw[dashdotted, red] (-2,8.5)--(-2,-8.5) node[left]{$u=-n$};
	\draw[dashdotted, red] (-8.5,-3)--(8.5, -3) node[below]{$v=-m$};
	\node[red] at (3,-1) {\textbullet};
	\node[red] at (3,6.5) {\textbullet};
	\draw[-](3,8.5)--(3,-1)-- (8.5,-1);
	\draw[dotted] (3,-1)--(-7.5,-1)node[left]{$v=v_0$};
	\draw[dotted] (3,2.5)--(3,-7.5)node[right]{$u=u_0$};
	\node[blue,draw=none] at (3.5,-2) {{\small $(u_0,v_0)$}};
	\node[blue,draw=none] at (6.5,6.5) {{\small $(u_0,v_1)$}};
	\node at (0,-11) {Figure $(iii)$};
	\end{tikzpicture}
	\hspace{2cm}
	\begin{tikzpicture}[scale=0.175]
	\draw[->] (-7.5,0)--(9,0) node[right]{$u$};
	\draw[->] (0,-7.5)--(0,8.5) node[above]{$v$};
	\draw[dashdotted, red] (-2,8.5)--(-2,-8.5) node[left]{$u=-n$};
	\draw[dashdotted, red] (-8.5,-3)--(8.5, -3) node[below]{$v=-m$};
	\node[red] at (3,-1) {\textbullet};
	\node[red] at (3,6.5) {\textbullet};
	\draw[](3,8.5)--(3,-1)-- (8.5,-1);
	\draw[dotted] (8.5,6.5)--(-7.5,6.5)node[above]{$v=v_1$};
	\draw[fill=cyan,fill opacity=0.35,draw=none] (3,8.5)--(3,-1)--(8.5,-1)--(8.5,8.5)--(3,8.5);
	\node[blue,draw=none] at (6,4.5) {$\mathcal{V}$};
	\node[blue,draw=none] at (3.5,-2) {{\small $(u_0,v_0)$}};
	\node[blue,draw=none] at (6.5,7.5) {{\small $(u_0,v_1)$}};
	\node at (0,-11) {Figure $(iv)$};
	\end{tikzpicture}
\end{center}

\begin{proof}
	If $M \neq 0$ then $M_{(u_0,v_0)} \neq 0$ for some $(u_0,v_0) \in \Z^2$. Thus $M_{\star,v_0}=\bigoplus_{u \in \Z} M_{(u,v_0)} \neq 0$ and $M_{u_0, \star}=\bigoplus_{v \in \Z} M_{(u_0,v)} \neq 0$. By \cite[Corollary 5.13]{TPSR},	
	\begin{align*}
	& M_{(u_0,v)}\left\{
	\begin{array}{ll}
	\neq 0~ \forall v \geq v_0  & \mbox{if } v_0 \geq -m+1 \\
	\neq 0~ \forall v \leq v_0  & \mbox{if } v_0 \leq -m 
	\end{array}
	\right. \mbox{ and }
	& M_{(u,v_0)}\left\{
	\begin{array}{ll}
	\neq 0~ \forall u \geq u_0  & \mbox{if } u_0 \geq -n+1 \\
	\neq 0~ \forall u \leq u_0  & \mbox{if } u_0 \leq -n 
	\end{array}
	\right.
	\end{align*}
In particular, if $u_0 \geq -n+1$ and $v_0 \geq -m+1$ then we get Figure $(iii)$. For instance, pick $v_1 \geq v_0$. Since $M_{(u_0,v_1)} \neq 0$, we have $M_{\star, v_1}=\bigoplus_{u \in \Z} M_{(u,v_1)} \neq 0$. Now as $u_0 \geq -n+1$ so by \cite[Corollary 5.13]{TPSR} $(M_{\star, v_1})_{u}=M_{u, v_1} \neq 0$ for all $u \geq u_0$. Varying $v_1$ from $v_0$ to $\infty$, we get the shaded region $\mathcal{V}$ in Figure $(iv)$ such that $M_{(u,v)} \neq 0$ for all $(u,v) \in \mathcal{V}$. In the same way, using \cite[Corollary 5.13]{TPSR} we will get different regions depending on the values of $(u_0,v_0)$.
\end{proof}

\s{ \bf Rigidity property.}\\
In this subsection, we explore the rigidity property of components of a bigraded generalized Eulerian $A_{n,m}(C)$-module $M$. This property is stronger than those in Sub-section \ref{van_tem} and plays a crucial role in analyzing components of bigraded local cohomology modules. Using this property, we get some interesting information regarding bass numbers, associated primes, dimensions of supports, and injective dimensions of biigraded components in Section 6.

\begin{theorem}[Rigidity-1]\label{bi-rigid} {\rm(}With hypotheses as in \ref{setup_bigraded} and notations as in Subsection \ref{notations}{\rm )}. Let $\mathcal{R}$ be one of the regions in the set $\big\{\mathcal{NE}, \mathcal{NW^*}, \mathcal{S^*W^*}, \mathcal{S^*E}\big\}$. Then the following conditions are equivalent: 
	\begin{enumerate}[\rm(a)]
		\item There exists $(u_0,v_0)\in \mathcal{R}$ such that $M_{(u,v)} \neq 0$ for all $(u,v) \in \mathfrak{B}_{(u_0,v_0)}$.
		\item $M_{(u,v)} \neq 0$ for all $(u,v)\in \mathcal{R}$.
		\item $M_{(u_1,v_1)} \neq 0$ for some $(u_1,v_1) \in \mathcal{R}$.
	\end{enumerate}
\end{theorem}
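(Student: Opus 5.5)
Since (b) $\Rightarrow$ (a) $\Rightarrow$ (c) are trivial, the whole proof reduces to (c) $\Rightarrow$ (b). Take (b) $\Rightarrow$ (a) with $(u_0,v_0)$ the corner of $\mathcal{R}$, so that $\mathfrak{B}_{(u_0,v_0)}=\mathcal{R}$ by Remark \ref{block_fig}. For (a) $\Rightarrow$ (c), note that $\mathfrak{B}_{(u_0,v_0)}\subseteq\mathcal{R}$ whenever $(u_0,v_0)\in\mathcal{R}$, and that it contains $(u_0,v_0)$. The argument for (c) $\Rightarrow$ (b) uses the graded rigidity results for generalized Eulerian modules, applied to the rows $M_{\star,v}$ and columns $M_{u,\star}$. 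By Remark \ref{comp-grd-genEur}, $M_{\star,v}$ is a graded generalized Eulerian $A^X_n(C)$-module and $M_{u,\star}$ is a graded generalized Eulerian $A^Y_m(C)$-module.

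The graded input I would invoke is the rigidity statement for a graded generalized Eulerian $A_n(C)$-module $N$ from \cite{TP2}/\cite{TPSR} (the analogue of \cite[Corollary 5.13]{TPSR} used in the Tameness proof, in its sharp form). It says: if $N_r\neq 0$ for some $r\ge 0$ then $N_r\neq0$ for all $r\geq 0$, and if $N_r\neq 0$ for some $r\leq -n$ then $N_r\neq 0$ for all $r\le -n$. I would state it as a separate sentence before using it.

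I treat $\mathcal{R}=\mathcal{NE}$ first; the other three cases follow by replacing "$\ge 0$" with "$\le -n$" in $u$ and/or "$\le -m$" in $v$. Suppose $M_{(u_1,v_1)}\neq0$ with $u_1,v_1\ge 0$. Then the column $M_{u_1,\star}$ has a nonzero piece in degree $v_1\ge 0$. Graded rigidity for the $A^Y_m(C)$-module $M_{u_1,\star}$ then gives $M_{(u_1,v)}\ne 0$ for all $v\ge 0$. Now fix any $v\ge 0$. The row $M_{\star,v}$ is nonzero in degree $u_1\ge 0$, so rigidity for the $A^X_n(C)$-module $M_{\star,v}$ gives $M_{(u,v)}\neq0$ for all $u\ge0$. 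Hence $M_{(u,v)}\ne0$ on all of $\mathcal{NE}$. The same two-step argument (column first, then rows) handles $\mathcal{NW^*}$, $\mathcal{S^*W^*}$ and $\mathcal{S^*E}$, in each case using the half-line of the graded statement that matches the sign conditions defining $\mathcal{R}$.

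The main obstacle is making sure the graded rigidity result is available in exactly the needed form, for an arbitrary Noetherian $C$ containing a field of characteristic zero. In particular it must give nonvanishing on the entire half-line $\{r\ge0\}$ (resp. $\{r\le -n\}$) from a single nonzero degree in it. The Tameness proof quotes \cite[Corollary 5.13]{TPSR} with thresholds $-n+1$ and $-n$; these give the "$\ge u_0$" and "$\le u_0$" rays. To cover the full half-line I would apply the rigidity theorem of \cite{TPSR} (equivalently \cite[Theorem 1.7]{TP2}-type rigidity), which rests on the de Rham homology descent of Propositions \ref{bigrd-genEur-XY}–\ref{bigrd-genEur-XY-con}. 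If only the ray versions are available, I would instead argue by induction on $n$ via the Koszul homology $H_i(X_n;\,\cdot\,)$ exact sequences. Those sequences give injectivity of multiplication by $X_n$ in degrees $\ge0$ and of $\partial_n$ in degrees $\le -n$, which propagates nonvanishing downward to $0$ (resp. upward to $-n$).
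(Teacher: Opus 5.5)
Your argument is correct, but it is not the route the paper's written proof takes. It is the ``short proof'' that the Note after the statement alludes to, namely the row/column method used for Theorem \ref{bi-rigid2}.

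\textbf{The paper's route.} The paper proves (c)$\Rightarrow$(b) by induction on $n+m$.
\begin{itemize}
\item For $n=m=1$ it uses the four-term Koszul sequences for $X_1$ and $Y_1$. Together with the concentration of $H_i(X_1;M)$ in $u=0$ and of $H_i(Y_1;M)$ in $v=0$ (Proposition \ref{bigrd-genEur-XY-con}), these give $M_{(u,v)}\cong M_{(0,0)}$ on all of $\mathcal{NE}$.
\item In the inductive step it splits into cases according to whether some $H_j(X_n;M)$ or $H_j(Y_m;M)$ is nonzero somewhere in the block (resp.\ $H_j(\partial_n;M)$, $H_j(\delta_m;M)$ for the other blocks). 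It applies the induction hypothesis to that module in fewer variables and transports nonvanishing back to $M$ through the exact sequences. If all of them vanish on the block, the sequences give isomorphisms across it.
\item Only in the subcase $n=1$ or $m=1$ does it invoke \cite[Theorem 7.1 (II)]{TPSR} on a row and then on columns. That is exactly your argument.
\end{itemize}

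\textbf{Comparison.} Your version is shorter and uniform over the four blocks. It needs no case analysis and none of Propositions \ref{bigrd-genEur-XY}--\ref{bigrd-genEur-partial}. The paper's version stays inside the bigraded Koszul/de Rham formalism. In the base case and in the all-vanishing case it yields actual isomorphisms between components rather than mere nonvanishing, and the same mechanism drives the proof of Theorem \ref{bi-poly}. Since the paper's induction still calls on \cite[Theorem 7.1]{TPSR} in the $n=1$/$m=1$ subcase, it is no more self-contained than yours.

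\textbf{One correction: drop the fallback in your last paragraph.} For $n\ge 2$, multiplication by $X_n$ need not be injective in degrees $\ge 0$. The module $H_1(X_n;M)$ is only a generalized Eulerian module in $n-1$ variables and need not be concentrated in one degree. For instance, take $M=H^1_{(X_n)}(R)=R_{X_n}/R$. The class of $X_1/X_n$ is a nonzero element of $M_{(0,0)}$ killed by $X_n$, so $X_n\colon M_{(0,0)}\to M_{(1,0)}$ is not injective. Concentration of $H_i(X_n;M)$ in a single degree, and hence the resulting isomorphisms, holds only when $n=1$ (Proposition \ref{bigrd-genEur-XY-con}). The fallback is unnecessary anyway. \cite[Theorem 7.1 (I),(II)]{TPSR} gives precisely the half-line statements you need, for any Noetherian $C$ containing a field of characteristic zero, and the paper itself uses them in this generality in proving Theorem \ref{bi-rigid2}.
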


\begin{note*}
One can give a short proof of the above result using the same arguments as in Theorem \ref{bi-rigid2}. But the following relatively longer proof will help the reader to get an insight into the statements.	
\end{note*}

\begin{proof}
We fix $\mathcal{R}$ to be the region $\mathcal{NE}$. Clearly $(b) \implies (a) \implies (c)$. We  only have to prove $(c) \implies (b)$. We use induction on $r=n+m$. As $n, m \geq 1$, so $r \geq 2$. Let $r=2$ which holds only when $n=1=m$. 
	
	Now $H_1(Y_1, M)_{(u,v)}$ is nonzero only if $v=0$ and $H_1(X_1, M)_{(u,v)}$ is nonzero only if $u=0$ for $l=0,1$. We have the following exact sequences
	\begin{equation}\label{rigid0}
	\begin{split}
	(1) \quad & 0 \to H_1(Y_1, M)_{(u,v)} \to M_{(u,v-1)} \overset{\cdot Y_1}{\lrt} M_{(u,v)} \to H_0(Y_1, M)_{(u,v)} \to 0,\\
	(2) \quad & 0 \to H_1(X_1, M)_{(u,v)} \to M_{(u-1,v)} \overset{\cdot X_1}{\lrt} M_{(u,v)} \to H_0(X_1, M)_{(u,v)} \to 0.
	\end{split}
	\end{equation}
	Thus we get that $M_{(u,v)} \cong M_{(u,v-1)}$ for all $v \geq 1$, that is $M_{(u,v)} \cong M_{(u,0)} \cong M_{(u,v_1)}$ for all $(u,v) \in \Z$ with $v \geq 0$. Similarly from the second equation we get that $M_{(u,v)} \cong M_{(0,v)} \cong M_{(u_1,v)}$ for all $(u,v) \in \Z$ with $u \geq 0$. Hence for each $(u,v) \in \mathcal{NE}$,  $M_{(u,v)} \cong M_{(u_1,v_1)} \neq 0$.
	
	Now let $r>2$. For $j=0,1$, we have $H_j(Y_m, M)$ and $H_j(X_n, M)$ are bigraded generalized Eulerian $A_{n,m-1}(C)$-module and $A_{n-1,m}(C)$-module, respectively. 
	
	We separately deal with the case when either $n=1$ or $m=1$. Let $n=1$, then $H_l(X_n, M)_{(u,v)}$ is nonzero only if $u=0$. From the exact sequence 
	\begin{equation*}
	0 \to H_1(X_1, M)_{(u,v)} \to M_{(u-1,v)} \overset{\cdot X_1}{\lrt} M_{(u,v)} \to H_0(X_1, M)_{(u,v)} \to 0,
	\end{equation*}
	we get that $M_{(u,v)} \cong M_{(u-1,v)} \cong M_{(0,v)} \cong M_{(u_1,v)}$ for all $(u,v) \in \Z$ with $u \geq 0$. Recall that $M_{\star,v_1}:=\bigoplus_{u \in \Z}M_{(u,v_1)}$ is a graded generalized Eulerian $A_1(C)$-module with $(M_{\star,v_1})_{u_1}= M_{(u_1, v_1)}\neq 0$ for some $u_1 \geq 0$. Hence by \cite[Theorem 7.1 (II)]{TPSR} it follows that $(M_{\star,v_1})_{u}=M_{(u,v_1)} \neq 0$ for all $u \geq 0$. Choose $u \in (-\infty, -1]$ and fix it. Since $M_{u,\star}:=\bigoplus_{v \in \Z}M_{(u,v)}$ is a graded generalized Eulerian $A_m(C)$-module with $(M_{u,\star})_{v_1}= M_{(u, v_1)}\neq 0$ for some $v_1 \geq 0$, so again by \cite[Theorem 7.1 (II)]{TPSR} we get that $(M_{u,\star})_{v}=M_{(u,v)} \neq 0$ for all $v \geq 0$. 
	
	Now let $n>1$ and $m>1$. This allows us to use induction hypothesis on $H_1(Y_m, M)$ and $H_1(X_n, M)$ (as $n-1 \geq 1$ and $m-1 \geq 1$). We have the following exact sequences 
	\begin{equation}\label{rigid1}
	\begin{split}
	(1) \quad &0 \to H_1(Y_m, M)_{(u,v)} \to M_{(u,v-1)} \overset{\cdot Y_m}{\lrt} M_{(u,v)} \to H_0(Y_m, M)_{(u,v)} \to 0,\\
	(2) \quad & 0 \to H_1(X_n, M)_{(u,v)} \to M_{(u-1,v)} \overset{\cdot X_n}{\lrt} M_{(u,v)} \to H_0(X_n, M)_{(u,v)} \to 0.
	\end{split}
	\end{equation}
	Now consider the following three cases:
	
	\vspace{0.25cm}
	{\it Case 1:} $H_0(Y_m; M)_{(u_2,v_2)} \neq 0$ for some $(u_2,v_2) \in \mathcal{NE}$.
	
	By the induction hypothesis it follows that $H_0(Y_m; M)_{(u,v)} \neq 0$ for all $(u,v) \geq 0$. So by the exact sequence \eqref{rigid1}(1) we get that $M_{(u,v)} \neq 0$ for all $(u,v)\geq (0,0)$.
	
	\vspace{0.25cm}
	{\it Case 2:} $H_1(Y_m; M)_{(u_2,v_2)} \neq 0$ for some $(u_2,v_2) \in \mathcal{NE}$.
	
	By the induction hypothesis we get that $H_1(Y_m; M)_{(u,v)} \neq 0$ for all $(u,v) \geq 0$. So by the exact sequence \eqref{rigid1}(1) we get that $M_{(u,v)} \neq 0$ for all $(u,v)\geq (0,-1)$ and hence for all $(u,v)\geq (0,0)$.
	
	\vspace{0.25cm}
	{\it Case 3:} $H_0(X_n; M)_{(u_2,v_2)} \neq 0$ for some $(u_2,v_2) \in \mathcal{NE}$.
	
	By the induction hypothesis it follows that $H_0(Y_m; M)_{(u,v)} \neq 0$ for all $(u,v) \geq 0$. So by the exact sequence \eqref{rigid1}(2) we get that $M_{(u,v)} \neq 0$ for all $(u,v)\geq (0,0)$.
	
	\vspace{0.25cm}
	{\it Case 4:} $H_1(X_n; M)_{(u_2,v_2)} \neq 0$ for some $(u_2,v_2) \in \mathcal{NE}$.
	
	By the induction hypothesis we get that $H_1(Y_m; M)_{(u,v)} \neq 0$ for all $(u,v) \geq 0$. So by the exact sequence \eqref{rigid1}(2) we get that $M_{(u,v)} \neq 0$ for all $(u,v)\geq (-1,0)$ and hence for all $(u,v)\geq (0,0)$.
	
	\vspace{0.25cm}
	{\it Case 5:} For $j=0, 1$; $H_j(Y_m; M)_{(u_2,v_2)}= 0$ and $H_j(X_n; M)_{(u_2,v_2)}= 0$ for all $(u_2,v_2) \in \mathcal{NE}$.
	
	From \eqref{rigid1} we get that $M_{(u,v-1)} \cong M_{(u,v)} \cong M_{(u-1,v)}$ for all $(u,v)\geq (0,0)$. Therefore $M_{(u,v)} \cong M_{(u_1,v_1)} \neq 0$ for all $(u,v)\geq (0,0)$. The result follows.

	\vspace{0.15cm}
	For the region $\mathcal{NW^*}$: use the following pair of short exact sequences 
	\begin{equation}\label{rigid2}
	\begin{split}
	(1) \quad &0 \to H_1(Y_m, M)_{(u,v)} \to M_{(u,v-1)} \overset{\cdot Y_m}{\lrt} M_{(u,v)} \to H_0(Y_m, M)_{(u,v)} \to 0,\\
	(2) \quad & 0 \to H_1(\partial_n, M)_{(u,v)} \to M_{(u+1,v)} \overset{\cdot \partial_n}{\lrt} M_{(u,v)} \to H_0(\partial_n, M)_{(u,v)} \to 0.
	\end{split}
	\end{equation}
	
	\vspace{0.5cm}
	For the region $\mathcal{S^*W^*}$: use the following pair of short exact sequences 
	\begin{equation}\label{rigid3}
	\begin{split}
	(1) \quad &0 \to H_1(\delta_m, M)_{(u,v)} \to M_{(u,v+1)} \overset{\cdot \delta_m}{\lrt} M_{(u,v)} \to H_0(\delta_m, M)_{(u,v)} \to 0,\\
	(2) \quad & 0 \to H_1(\partial_n, M)_{(u,v)} \to M_{(u+1,v)} \overset{\cdot \partial_n}{\lrt} M_{(u,v)} \to H_0(\partial_n, M)_{(u,v)} \to 0.
	\end{split}
	\end{equation}
	
	\vspace{0.5cm}
	For the region $\mathcal{S^*E}$: use the following pair of short exact sequences 
	\begin{equation}\label{rigid4}
	\begin{split}
	(1) \quad &0 \to H_1(\delta_m, M)_{(u,v)} \to M_{(u,v+1)} \overset{\cdot \delta_m}{\lrt} M_{(u,v)} \to H_0(\delta_m, M)_{(u,v)} \to 0,\\
	(2) \quad & 0 \to H_1(X_n, M)_{(u,v)} \to M_{(u-1,v)} \overset{\cdot X_n}{\lrt} M_{(u,v)} \to H_0(X_n, M)_{(u,v)} \to 0.
	\end{split}
	\end{equation}
	\end{proof}

We now discuss the second part of the rigidity properties. For this, we will continue with the numbering in Theorem \ref{bi-rigid}.

\begin{theorem}[Rigidity-2]\label{bi-rigid2}{With hypothesis as in \ref{setup_bigraded} and notations as in Subsection \ref{notations}{\rm)}}. Pick a coordinate $(u_0,v_0)$ from one of the truncated regions in the set 
\[\big\{\mathrm{Trun}(\mathcal{N}), \mathrm{Trun}(\mathcal{W^*}), \mathrm{Trun}(\mathcal{S^*}), \mathrm{Trun}(\mathcal{E})\big\}.\]
Suppose that $M_{(u_0,v_0)}\neq 0$. Then $M_{(u,v)} \neq 0$ for all $(u,v)$ in the respective region in the set $\big\{\mathcal{N}, \mathcal{W^*}, \mathcal{S^*}, \mathcal{E}\big\}$, keeping the ordering same. Moreover, if $M_{(u_0,v_0)} \neq 0$ for some $-n<u_0<0$ and $-m< v_0<0$, then $M_{(u,v)}\neq 0$ for all $(u,v)\in \Z^2$.
\end{theorem}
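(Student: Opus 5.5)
The plan is to reduce everything to the one-variable (graded) rigidity theorem along rows and columns, as in the proof of the Tameness theorem. By Remark \ref{comp-grd-genEur}, for fixed $(u_0,v_0)$ the row $M_{\star,v_0}=\bigoplus_{u}M_{(u,v_0)}$ is a graded generalized Eulerian $A^X_n(C)$-module. Likewise the column $M_{u_0,\star}=\bigoplus_v M_{(u_0,v)}$ is a graded generalized Eulerian $A^Y_m(C)$-module.

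For a graded generalized Eulerian $A_n(C)$-module $N$ I will use the graded rigidity of \cite[Theorem 7.1]{TPSR}, together with \cite[Corollary 5.13]{TPSR} as already used above. It splits into three cases:
\begin{enumerate}[\rm (1)]
\item if $N_{u}\neq 0$ for some $u\geq 0$, then $N_u\neq 0$ for all $u\geq 0$;
\item if $N_u\neq 0$ for some $u\leq -n$, then $N_u\neq 0$ for all $u\leq -n$;
\item if $N_u\neq 0$ for some $-n<u<0$, then $N_u\neq 0$ for all $u\in\Z$.
\end{enumerate}
The analogous statements hold for $A_m(C)$-modules with $-m$ in place of $-n$. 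Statement (3) is the key input, and I take it from \cite[Theorem 7.1]{TPSR}. Checking that this is exactly what is proved there, in the generality of \ref{setup_bigraded} with $C$ an arbitrary Noetherian ring containing $K$, is the only real obstacle. If it is only stated there in the form ``$N_u\neq0$ for all $u$ in one of the three ranges forces all $u$'', I would recover (3) from the exact sequences $0\to H_1(X_n;N)_u\to N_{u-1}\to N_u\to H_0(X_n;N)_u\to 0$ and their $\partial_n$-analogues. Propositions \ref{bigrd-genEur-XY} and \ref{bigrd-genEur-partial} then give the inductive step, and Propositions \ref{bigrd-genEur-partial-con} and \ref{bigrd-genEur-XY-con} give the base case $n=1$, exactly as in the proof of Theorem \ref{bi-rigid}.

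With this in hand the argument is a two-step sweep.

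\emph{$\mathrm{Trun}(\mathcal N)$:} here $-n<u_0<0$ and $v_0\geq 0$. Applying (1) to the column $M_{u_0,\star}$ gives $M_{(u_0,v)}\neq0$ for all $v\ge0$. For each such $v$, the row $M_{\star,v}$ is nonzero in degree $u_0\in(-n,0)$, so (3) gives $M_{(u,v)}\neq 0$ for all $u$. Hence $M$ is nonzero on all of $\mathcal N$.

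\emph{$\mathrm{Trun}(\mathcal E)$:} here $u_0\geq0$ and $-m<v_0<0$. First apply (1) to the row $M_{\star,v_0}$ to get nonvanishing for all $u\ge 0$. Then apply (3) to each column $M_{u,\star}$ with $u\geq 0$, which fills all $v$.

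\emph{$\mathrm{Trun}(\mathcal W^*)$ and $\mathrm{Trun}(\mathcal S^*)$:} these are identical, using (2) in place of (1) for the coordinate lying in the restricted range ($u_0\le -n$, respectively $v_0\le -m$). Then use (3) in the other coordinate.

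\emph{The ``moreover'' part:} if $-n<u_0<0$ and $-m<v_0<0$, apply (3) to the row $M_{\star,v_0}$ to get $M_{(u,v_0)}\neq0$ for every $u\in\Z$. Then apply (3) to every column $M_{u,\star}$, which is nonzero in degree $v_0\in(-m,0)$. This gives $M_{(u,v)}\neq 0$ for all $(u,v)\in\Z^2$.

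This also yields the short proof of Theorem \ref{bi-rigid} alluded to in the Note, using (1) and (2) in both coordinates.
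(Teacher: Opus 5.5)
Your proposal is correct and follows the paper's proof. Both arguments use Remark \ref{comp-grd-genEur} to view rows and columns as graded generalized Eulerian modules, then do a two-step sweep: once with the half-line rigidity \cite[Theorem 7.1]{TPSR} and once with the full-line statement (3), which the paper takes directly from \cite[Theorem 7.3]{TPSR}, so your fallback derivation of (3) is unnecessary. The only difference is order: the paper applies the full-line result first and the half-line result second, while you do the reverse, which is immaterial.
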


\begin{proof} 
For the region $\mathcal{N}$, we only need to prove $(b) \implies (a)$. Recall that $M_{\star,v_0}=\bigoplus_{u \in \Z}M_{(u,v_0)}$ is a graded generalized Eulerian $A_n(C)$-module and $(M_{\star,v_0})_{u_0}=M_{(u_0,v_0)} \neq 0$ for some $-n<u_0< 0$. So by \cite[Theorem 7.3]{TPSR} it follows that $M_{(u,v_0)} \neq 0$ for all $u \in \Z$, see Figure $\mathcal{N} (b)$ (red line). Now for any fixed $u \in \Z$, $M_{u, \star}=\bigoplus_{v \in \Z}M_{(u,v)}$ is a graded generalized Eulerian $A_m(C)$-module and $(M_{u, \star})_{v_0}=M_{(u,v_0)} \neq 0$ for some $v_0 \geq 0$. By \cite[Theorem 7.1 (II)]{TPSR} we get that $(M_{u, \star})_v=M_{(u,v)} \neq 0$ for all $v \geq 0$, see Figure $\mathcal{N} (b)$ (blue line). 
	\begin{center}		
				\begin{tikzpicture}[scale=0.12]
		\draw[->] (-7.5,0)--(8.5,0) node[right]{$u$};
		\draw[->] (0,-7.5)--(0,8.5) node[above]{$v$};
		\draw[blue] (-2,3)--(-7.5,3);
		\draw[dotted] (-7.5,-3)--(8,-3) node[below]{$v=-m$};
		\draw[red] (-5,-7.5)--(-5,8);
		\draw[dotted] (-2,-7.5)--(-2,8) node[left]{$u=-n$};
		\node at (-5,-1.5){$\bullet$};
		\node at (0,-11) {\footnotesize{\textit Figure: $\mathcal{W^*}(b)$}};
		\end{tikzpicture}
		\begin{tikzpicture}[scale=0.12]
		\draw[->] (-7.5,0)--(8.5,0) node[right]{$u$};
		\draw[->] (0,-7.5)--(0,8.5) node[above]{$v$};
		\draw[dotted] (-7.5,-3)--(8,-3)node[below]{$v=-m$};
		\draw[dotted] (-2,-7.5)--(-2,8) node[left]{$u=-n$};
		\node at (-5,-1.5){$\bullet$};
		\draw[fill=cyan,fill opacity=0.35,draw=none] (-8,8)--(-2,8)--(-2,-8)--(-8,-8);
		\node at (0,-11) {\footnotesize{\textit Figure: $\mathcal{W^*}(a)$}};
		\end{tikzpicture}
		\begin{tikzpicture}[scale=0.12]
		\draw[->] (-7.5,0)--(8.5,0) node[right]{$u$};
		\draw[->] (0,-7.5)--(0,8.5) node[above]{$v$};
		\draw[red] (8,3)--(-7.5,3);
		\draw[dotted] (-7.5,-3)--(8,-3)node[below]{$v=-m$};
		\draw[blue] (5,0)--(5,8);
		\draw[dotted] (-2,-7.5)--(-2,8) node[left]{$u=-n$};
		\node at (-1,3){$\bullet$};
		\node at (0,-11) {\footnotesize{\textit Figure: $\mathcal{N} (b)$}};
		\end{tikzpicture}
		\begin{tikzpicture}[scale=0.12]
		\draw[->] (-7.5,0)--(8.5,0) node[right]{$u$};
		\draw[->] (0,-7.5)--(0,8.5) node[above]{$v$};
		\draw[dotted] (-7.5,-3)--(8,-3)node[below]{$v=-m$};
		\draw[dotted] (-2,-7.5)--(-2,8) node[left]{$u=-n$};
		\node at (-1,3){$\bullet$};
		\draw[fill=cyan,fill opacity=0.35,draw=none] (-8,8)--(8,8)--(8,0)--(-8,0);
		\node at (0,-11) {\footnotesize{\textit Figure: $\mathcal{N} (a)$}};
		\end{tikzpicture}				
	\end{center}

For the region $\mathcal{W^*}$, we only need to prove $(b) \implies (a)$. Recall that $M_{u_0, \star}=\bigoplus_{v \in \Z}M_{(u_0,v)}$ is a graded generalized Eulerian $A_m(C)$-module and $(M_{u_0, \star})_{v_0}=M_{(u_0,v_0)} \neq 0$ for some $-m<v_0< 0$. So by \cite[Theorem 7.3]{TPSR} it follows that $M_{(u_0,v)} \neq 0$ for all $v \in \Z$, see Figure $\mathcal{W^*}(b)$ (red line). Now for any fixed $v \in \Z$, $M_{\star, v}=\bigoplus_{u \in \Z}M_{(u,v)}$ is a graded generalized Eulerian $A_n(C)$-module and $(M_{\star, v})_{u_0}=M_{(u_0,v)} \neq 0$ for some $u_0 \leq -n$. By \cite[Theorem 7.1 (I)]{TPSR} we get that $(M_{\star, v})_u=M_{(u,v)} \neq 0$ for all $u \leq -n$, see Figure $\mathcal{W^*}(b)$ (blue line). 
	\begin{center}	
		\begin{tikzpicture}[scale=0.12]
		\draw[->] (-7.5,0)--(8.5,0) node[right]{$u$};
		\draw[->] (0,-7.5)--(0,8.5) node[above]{$v$};
		\draw[red] (7.5,-5)--(-7.5,-5);
		\draw[dotted] (8,-3)--(-7.5,-3)node[above]{$v=-m$};
		\draw[blue] (5,-7.5)--(5,-3);
		\draw[dotted] (-2,-7.5)--(-2,8) node[left]{$u=-n$};
		\node at (-1,-5){$\bullet$};
		\node at (0,-11) {\footnotesize{\textit Figure: $\mathcal{S^*}(b)$}};
		\end{tikzpicture}
		\begin{tikzpicture}[scale=0.12]
		\draw[->] (-7.5,0)--(8.5,0) node[right]{$u$};
		\draw[->] (0,-7.5)--(0,8.5) node[above]{$v$};
		\draw[dotted] (-7.5,-3)--(8,-3)node[below]{$v=-m$};
		\draw[dotted] (-2,-7.5)--(-2,8) node[left]{$u=-n$};
		\node at (-1,-5){$\bullet$};
		\draw[fill=cyan,fill opacity=0.35,draw=none] (-8,-3)--(8,-3)--(8,-8)--(-8,-8);
		\node at (0,-11) {\footnotesize{\textit Figure: $\mathcal{S^*}(a)$}};
		\end{tikzpicture}			
		\begin{tikzpicture}[scale=0.12]
		\draw[->] (-7.5,0)--(8.5,0) node[right]{$u$};
		\draw[->] (0,-7.5)--(0,8.5) node[above]{$v$};
		\draw[blue] (0,3)--(8,3);
		\draw[dotted] (8,-3)--(-7.5,-3)node[left]{$v=-m$};
		\draw[red] (5,-7.5)--(5,8);
		\draw[dotted] (-2,-7.5)--(-2,8) node[left]{$u=-n$};
		\node at (5,-1.5){$\bullet$};
		\node at (0,-11) {\footnotesize{\textit Figure: $\mathcal{E}(b)$}};
		\end{tikzpicture}
		\begin{tikzpicture}[scale=0.12]
		\draw[->] (-7.5,0)--(8.5,0) node[right]{$u$};
		\draw[->] (0,-7.5)--(0,8.5) node[above]{$v$};
		\draw[dotted] (8,-3)--(-7.5,-3)node[left]{$v=-m$};
		\draw[dotted] (-2,-7.5)--(-2,8) node[left]{$u=-n$};
		\node at (5,-1.5){$\bullet$};
		\draw[fill=cyan,fill opacity=0.35,draw=none] (0,8)--(8,8)--(8,-7.5)--(0,-7.5);
		\node at (0,-11) {\footnotesize{\textit Figure: $\mathcal{E}(a)$}};
		\end{tikzpicture}
	\end{center}
	Similarly as above, one can prove the assertions for the remaining regions.

\vspace{0.2cm}

\noindent
\begin{minipage}{0.445\textheight}
	
Finally, for the last assertion, we only need to prove $(ii) \implies (i)$. Recall that $M_{\star,v_0}=\bigoplus_{u \in \Z}M_{(u,v_0)}$ is a graded generalized Eulerian $A_n(C)$-module and $(M_{\star,v_0})_{u_0}=M_{(u_0,v_0)} \neq 0$ for some $-n<u_0< 0$. So by \cite[Theorem 7.3]{TPSR} it follows that $M_{(u,v_0)} \neq 0$ for all $u \in \Z$, see Figure $uv\mbox{-plane} (b)$ (red line). Now for any fixed $u \in \Z$, $M_{u, \star}=\bigoplus_{v \in \Z}M_{(u,v)}$ is a graded generalized Eulerian $A_m(C)$-module and $(M_{u, \star})_{v_0}=M_{(u,v_0)} \neq 0$ for some $-m<v_0< 0$. So again by \cite[Theorem 7.3]{TPSR} we get that $M_{(u,v)} \neq 0$ for all $v \in \Z$, see Fig. $uv\mbox{-plane} (b)$.
\end{minipage}
\begin{minipage}{0.3\textheight}
\begin{center}
		\begin{tikzpicture}[scale=0.12]
\draw[->] (-7.5,0)--(8.5,0) node[right]{$u$};
\draw[->] (0,-7.5)--(0,8.5) node[above]{$v$};
\draw[red] (8,-1.5)--(-7.5,-1.5);
\draw[dotted] (8,-3)--(-7,-3)node[left]{$v=-m$};
\draw[blue] (5,-7.5)--(5,8);
\draw[dotted] (-2,-7.5)--(-2,8) node[left]{$u=-n$};
\node at (-1,-1.5){$\bullet$};
\node at (0,-11) {\footnotesize{\textit Figure: $uv\mbox{-plane}(b)$}};
\end{tikzpicture}
\begin{tikzpicture}[scale=0.12]
\draw[->] (-7.5,0)--(8.5,0) node[below]{$u$};
\draw[->] (0,-7.5)--(0,8.5) node[above]{$v$};
\draw[dotted] (8,-3)--(-7.5,-3) node[below]{$v=-m$};
\draw[dotted] (-2,-7.5)--(-2,8) node[left]{$u=-n$};
\node at (-1,-1.5){$\bullet$};
\draw[fill=cyan,fill opacity=0.35,draw=none] (-7.5,8)--(8,8)--(8,-7.5)--(-7.5,-7.5);
\node at (0,-11) {\footnotesize{\textit Figure: $uv\mbox{-plane} (a)$}};
\end{tikzpicture}
\end{center}
\end{minipage}

\end{proof}

\s We now discuss a few examples whose nonzero components are associated the shaded regions in the $uv$-plane in the above figures. 

\vspace{0.15cm}
Let $R = K[X_1,\ldots, X_n]$ be a polynomial ring over a field $K$. Then
\[ H^{n}_{(X_1, \ldots, X_n)}(R)=E_R(K) \cong X_1^{-1}\cdots X_n^{-1}K[X_1^{-1}, \ldots, X_n^{-1}],\]
see \cite[p. 277]{BS}. Take the ideal $\mathfrak{a}=(X_1, \ldots, X_t)$ in $S:=K[X_1, \ldots, X_t]$ for some $t \leq n$. Then 
\begin{align*}
H^t_{\mathfrak{a}}(R)&=H^t_{\mathfrak{a}}(S) \otimes_{S} R\\
&\cong X_1^{-1}\cdots X_t^{-1}K[X_1^{-1}, \ldots, X_t^{-1}] \otimes_K K[X_{t+1}, \ldots, X_n]\\
&\cong X_1^{-1}\cdots X_t^{-1}K[X_1^{-1}, \ldots, X_t^{-1},X_{t+1}, \ldots, X_n].
\end{align*}

\begin{example}\label{eg_rigidity}
Let $R = K[X_1,\ldots, X_n, Y_1, \ldots, Y_m]$ be a polynomial ring over a field $K$ with $\bideg X_i=(1,0)$ and $\bideg Y_j=(0,1)$ for $i=1, \ldots, n$ and $j=1, \ldots, m$. Take the ideals $\m=(X_1,\ldots, X_n, Y_1, \ldots, Y_m)$, $\mathfrak{a}=(X_1,\ldots, X_n)$, $\mathfrak{b}=(Y_1, \ldots, Y_m)$, $\mathfrak{c}=(X_1,\ldots, X_n,Y_1)$, and $\mathfrak{d}=(X_n,Y_1, \ldots, Y_m)$ in $R$. Then one can verify the following:
	\begin{enumerate}[\rm(i)]
	\item $H^1_{(X_1)}(R)\cong X_1^{-1}K[X_1^{-1},X_2, \ldots, X_n,Y_1, \ldots, Y_m]$,
	\item $H^1_{(Y_1)}(R)\cong Y_1^{-1}K[X_1, \ldots, X_n,Y_1^{-1},Y_2, \ldots, Y_m]$,
	\item $H^2_{(X_1,Y_1)}(R)\cong (X_1^{-1}Y_1^{-1})K[X_1^{-1},X_2, \ldots, X_n,Y_1^{-1},Y_2, \ldots, Y_m]$,
	\item $H^n_{\mathfrak{a}}(R)\cong (X_1^{-1}\cdots X_n^{-1})K[X_1^{-1}, \ldots, X_n^{-1},Y_1, \ldots, Y_m]$,
	\item $H^m_{\mathfrak{b}}(R)\cong (Y_1^{-1}\cdots Y_m^{-1})K[X_1,\ldots, X_n,Y_1^{-1}\cdots Y_m^{-1}]$,
	\item $H^{n+1}_{\mathfrak{c}}(R)\cong (X_1^{-1}\cdots X_n^{-1}Y_1^{-1})K[X_1^{-1}, \ldots, X_n^{-1},Y_1^{-1},Y_2, \ldots, Y_m]$,
	\item $H^{m+1}_{\mathfrak{b}}(R)\cong (X_n^{-1}Y_1^{-1}\cdots Y_m^{-1})K[X_1,\ldots, X_{n-1},X_n^{-1},Y_1^{-1}\cdots Y_m^{-1}]$,
	\item $H^{n+m}_{\m}(R)\cong (X_1^{-1}\cdots X_n^{-1}Y_1^{-1}\cdots Y_m^{-1})K[X_1^{-1}, \ldots, X_n^{-1}, Y_1^{-1}, \ldots, Y_m^{-1}]$.
	\end{enumerate}
	For any arbitrary integers $i_1, \ldots, i_n$ and $j_1, \ldots, j_m$,
	\begin{align*}
	&\e^n_X \cdot X_1^{i_1} \cdots X_n^{i_n}= (i_1+ \cdots+i_n)X_1^{i_1} \cdots X_n^{i_n} \\
	\text{and} \quad&  \e^m_Y \cdot Y_1^{j_1} \cdots Y_m^{j_m}= (j_1+ \cdots+j_m) Y_1^{j_1} \cdots Y_m^{j_m}.
	\end{align*}
	So $H^{0}_{(0)}(R)=R, H^{n}_{\mathfrak{a}}(R), H^{n+m}_{\m}(R)$, $H^{m}_{\mathfrak{b}}(R)$, $H^2_{(X_1,Y_1)}(R)$, $H^1_{(X_1)}(R)$, $H^{n+1}_{\mathfrak{c}}(R)$, $H^{m+1}_{\mathfrak{b}}(R)$, and $H^1_{(Y_1)}(R)$ all are bigraded Eulerian $A_{n,m}(K)$-modules. Note that they are represented by the regions $\mathcal{NE}, \mathcal{NW^*}, \mathcal{S^*W^*}, \mathcal{S^*E}, uv\mbox{-plane}, \mathcal{N}, \mathcal{W^*},\mathcal{S^*}, \mathcal{E}$ respectively, especially when $n,m \geq 2$.
	
	\vspace{0.15cm}
	As $H^j_{\mathfrak{a}}(S)=0$ for all $j \neq n$ so we have $H^j_{\mathfrak{a}}(R)=0$ for all $j \neq n$. Similarly $H^j_{\mathfrak{b}}(R)=0$ for all $j \neq m$, and $H^j_{\mathfrak{m}}(R)=0$ for all $j \neq n+m$. Let $m \geq n \geq 2$. Note that $\mathfrak{a}+\mathfrak{b}=\mathfrak{m}$. Consider the following {\it Mayer-Victoris sequence} 
	\[\cdots \to H^n_{\mathfrak{m}}(R) \to H^n_{\mathfrak{a}}(R) \oplus H^n_{\mathfrak{b}}(R) \to H^n_{\mathfrak{a}\cap \mathfrak{b}}(R) \to H^{n+1}_{\mathfrak{m}}(R) \to \cdots \]
	Since $n, n+1< n+m$, we get that if $m=n$ then $H^n_{\mathfrak{a}}(R) \oplus H^n_{\mathfrak{b}}(R) \cong H^n_{\mathfrak{a}\cap \mathfrak{b}}(R)$ is represented by the combination of Figures (II) and (IV). Whereas, if $m >n$ then $H^n_{\mathfrak{a}}(R)\cong H^n_{\mathfrak{a}\cap \mathfrak{b}}(R)$ is represented by Figure (II).
\end{example}
\begin{note*}
	Take the ideal $I=(X+Y)$ in $R=K[X,Y]$. As $R=K[X,X+Y] \cong K[Z_1,Z_2]:=R'$ where $Z_1,Z_2$ are variables, so 
	\[H^1_I(R) \cong H^1_{(Z_2)}(R') \cong Z_2^{-1}K[Z_1,Z_2^{-1}] \cong (X+Y)^{-1}K[X,(X+Y)^{-1}].\] 
	Therefore Example \ref{eg_rigidity} can be rewritten for ideals like $I$ which are not monomial ideals. 
\end{note*}

\begin{remark}\label{localize_MGE}
	Let $M$ be a multigraded generalized Eulerian $A_{(n,m)}(C)$-module. Let $f$ be a bihomogeneous element in $R$ with $\bideg f=(u,v)$, and $z$ a bihomogeneous element in $M$ with $\bideg z=(u',v')$. In the same manner as \cite[Lemma 3.5]{TPJS}, one can show that
	\[(\e^X_n-u'+u)\cdot \frac{z}{f}=\frac{1}{f}(\e^X_n-u')\cdot z \quad \mbox{ and } \quad (\e^Y_m-v'+v)\cdot \frac{z}{f}=\frac{1}{f}(\e^Y_m-v')\cdot z.\]
	Then using the same line of proof of \cite[Lemma 3.6, Corollary 3.7]{TPJS}, we obtain that $M_f$ is a bigraded generalized Eulerian $A_{(n,m)}(C)$-module.
\end{remark}
\begin{example}
	Let $R=K[X_1,X_2,Y_1,Y_2]$ be a polynomial ring over a field $K$. Take the ideal $I=(X_1,X_2)$ in $R$. By Example \ref{eg_rigidity}, $H^2_I(R)=X_1^{-1}X_2^{-1} K[X_1^{-1},X_2^{-1},Y_1,Y_2]$. From \cite[Exercise 1.2.7]{BS} we further get that $H^2_I(R)_{Y_1} \cong H^2_{IR_{Y_1}}(R_{Y_1})$. Moreover, $H^2_{IR_{Y_1}}(R_{Y_1}) \cong H^2_{I}(R_{Y_1})$. Since $H^2_I(R)$ is a multigraded generalized Eulerian, holonomic $A_{(2,2)}(K)$-module (see Definition \ref{bi-holo}), so is $H^2_I(R)_{Y_1}$ by Remark \ref{localize_MGE} and \cite[Theorem 3.4.1]{BS}. Observe that
	\begin{align*}
	H^2_I(R)_{Y_1}\cong& H^2_I(R) \otimes_R R_{Y_1}\\
	=&X_1^{-1}X_2^{-1} K[X_1^{-1},X_2^{-1},Y_1,Y_2] \otimes_R \frac{K[X_1,X_2,Y_1,Y_2,W_1]}{(1-Y_1W_1)} \\
	\cong&
	X_1^{-1}X_2^{-1}K[X_1^{-1},X_2^{-1}] \otimes_K K[Y_1,Y_2]\otimes_R\frac{K[X_1,X_2,Y_1,Y_2,W_1]}{(1-Y_1W_1)}\\
	\cong& X_1^{-1}X_2^{-1}K[X_1^{-1},X_2^{-1}] \otimes_K \frac{R}{(X_1,X_2)}\otimes_R\frac{K[X_1,X_2,Y_1,Y_2,W_1]}{(1-Y_1W_1)}\\
	 \cong&X_1^{-1}X_2^{-1}K[X_1^{-1},X_2^{-1}] \otimes_K \frac{K[X_1,X_2,Y_1,Y_2,W_1]}{(X_1,X_2,1-Y_1W_1)}, \quad \mbox{ as } \frac{R}{I} \otimes_R M \cong \frac{M}{IM}\\
	\cong& X_1^{-1}X_2^{-1}K[X_1^{-1},X_2^{-1}] \otimes_K \frac{K[Y_1,Y_2,W_1]}{(1-Y_1W_1)}\\
	\cong& X_1^{-1}X_2^{-1}K[X_1^{-1},X_2^{-1}] \otimes_K K[Y_1,Y_2,W_1] \otimes_{K[Y_1,Y_2,W_1]}\frac{K[Y_1,Y_2,W_1]}{(1-Y_1W_1)}\\
	\cong& X_1^{-1}X_2^{-1} K[X_1^{-1},X_2^{-1},Y_1,Y_2,W_1] \otimes_{K[Y_1,Y_2,W_1]}\frac{K[Y_1,Y_2,W_1]}{(1-Y_1W_1)}\\
	\cong&\frac{X_1^{-1}X_2^{-1} K[X_1^{-1},X_2^{-1}, Y_1,Y_2,W_1]}{(1-Y_1W_1)}\\
	\cong& X_1^{-1}X_2^{-1} K[X_1^{-1},X_2^{-1}, Y_1,Y_2,Y_1^{-1}].
	\end{align*}
	Clearly $H^2_I(R)_{Y_1}$ is represented by the shaded region in Figure (VII) (a) in Theorem \ref{bi-rigid}. Similarly one can check that $H^2_{(Y_1,Y_2)}(R)_{X_1} \cong Y_1^{-1}Y_2^{-1} K[X_1,X_2,X_1^{-1}, Y_1^{-1},Y_2^{-1}]$ is represented by Figure (VIII) (a) in Theorem \ref{bi-rigid}. Further note that $R_{Y_1}=K[X_1,X_2,Y_1,Y_2,Y_1^{-1}]$ and $R_{X_1}=K[X_1,X_2,X_1^{-1},Y_1,Y_2]$ are represented by Figures (IX) (a) (VI) (a) in Theorem \ref{bi-rigid},  respectively.
\end{example}

\section{Polynomial expressions of the \texorpdfstring{$K$}{K}-vector space dimension of the bigraded components}
Suppose that $C$ is a {\it regular ring} containing a field $K$ of characteristic zero. Let $R=C[X_1, \ldots, X_n,$ $ Y_1, \ldots, Y_m]$ be a standard bigraded polynomial ring with $\bideg X_i=(1,0)$ and $\bideg Y_j=(0,1)$ for $i=1, \ldots, n$ and $j=1, \ldots, m$.
 
Let $B=K[[Y_1, \ldots, Y_d]]$ be a power series ring
and $A = B[X_1,\ldots,X_n]$ be a polynomial ring over $B$. Consider $A$ as graded with $\deg b = 0$ for all $b \in B$ and $\deg X_i = 1$ for $i=1,\ldots, n$. Let $D_K(B):= B\langle \partial_1,\ldots, \partial_d\rangle$ be the ring of $K$-linear differential operators on $K$, where $\partial_i=\partial/\partial X_i$ for $i=1,\ldots, n$. By $\D_n := A_n(D_K(B))$, we denote the $n^{th}$-Weyl algebra over $D_K(B)$. Consider $\D_n$ as a graded ring with $\deg a=0$ for all $a \in D_K(B)$, $\deg X_i = 1$ and $\deg \partial_i = -1$ for $i=1,\ldots, n$. Note that $A$ is a graded subring of $\D_n$. 

There is a filtration $\mathcal{B}$ of $\D_n$ such that 
the associated graded ring $\gr_{\mathcal{B}}(\D_n)$ is a polynomial ring over $C$ in $(d + 2n)$-variables, see \cite[4.5]{TP2}. From \cite[3.1.9]{BJ} we get that the weak global dimension of $\D_n$ is $d + n$. Thus a $\D_n$-module $M$ is said to be {\it holonomic} if either it is zero  or there is a $\mathcal{B}$-compatible filtration $\mathcal{F}$ of $M$ such that the associated graded module $\gr_{\mathcal{F}}(M)$ is a finitely generated $\gr_{\mathcal{B}}(\D_n)$-module (equiv. $M$ is finitely generated as a $\D_n$-module, see \cite[Theorem 4.9]{TP2}) of dimension $d + n$. We now extend this notion of holonomic modules to the bigraded setting.

\begin{definition}\label{bi-holo}
If $C=K[[Y_1, \ldots, Y_d]]$, then we say a bigraded $A_{n,m}(D_K(C))$-module $M$ is {\it holonomic} if $\mbox{Tot}(M)$ is holonomic as an $A_{n+m}(D_K(C))$-module.
\end{definition}

\begin{note*}\label{fg}
Clearly, if $\mbox{Tot}(M)$ is zero then $M$ is zero. Further, notice the ring structures of $A_{\underline{s}}(D_K(C))$ and $\mbox{Tot}(A_{n,m}(D_K(C)))$ $=A_{n+m}(D_K(C))$ are the same. Similarly, the module structures of $M$ and $\mbox{Tot}(M)$ are the same. Therefore, if $\mbox{Tot}(M)$ is a finitely generated $A_{n+m}(D_K(C))$-module, then $M$ is a finitely generated $A_{n,m}(D_K(C))$-module. 
\end{note*}

Let $\TT$ be  a bigraded Lyubeznik functor on ${}^*\Mod(R)$. From \cite[Theorem 4.2]{TP2} it immediately follows that $\TT(R)$ is a holonomic $A_{n,m}(D_K(C))$-module.

\vspace{0.15cm}
Throughout this section, $C=K$ is a field. Notice an $A_{n,m}(K)$-module $M$ is holonomic if either it is zero, or it is a finitely generated $A_{n+m}(K)$-module with dimension $n+m$. The general definition of holonomic $A_{n,m}(D_K(C))$-modules is required for the next section.

\vspace{0.15cm}
Across this section, we use the following setup.
\s {\bf Setup}.\label{setup-K-bi} Let $R=K[X_1, \ldots, X_n, Y_1, \ldots, Y_m]$ be a standard bigraded polynomial ring in $(n+m)$-variables over a filed $K$ with $\chars K=0$. Let $A_{n,m}(K)=R\langle \partial_1, \ldots, \partial_n,\delta_1, \ldots, \delta_m\rangle$ be the bigraded $(n,m)^{th}$-Weyl algebra over $K$, where $\partial_i=\partial/\partial X_i$, $\delta_j=\partial/\partial Y_j$ for all $1 \leq i \leq n$ and $1 \leq j \leq m$. Let $M=\oplus_{(u,v)\in \Z^2} M_{(u,v)}$ be a bigraded generalized Eulerian, holonomic $A_{n,m}(K)$-module.

\begin{remark}\label{comp-holo}
Let $M = \bigoplus_{(u,v)\in \Z^2}M_{(u,v)}$ be a holonomic bigraded $A_{n,0}(K)$-module. Fix $v \in \Z$. Set $M_{*,v}:= \bigoplus_{u \in \Z} M_{(u,v)}$. Clearly, $M_{*,v}$ is a bigraded $A_{n,0}(K)$-submodule of $M$. By \cite[Lemma 3.3]{TPSR}, $A_n(K)$ is left (also right) Noetherian. From the sequence $0 \to M_{*,v} \to M$ of left $A_n(K)$-modules we further get that $M_v$ is a finitely generated left $A_n(K)$-module. From \cite[proposition 5.2]{BJ} it then follows that $M_{*,v}$ is a holonomic left $A_n(K)$-module. Similar arguments hold for any holonomic bigraded $A_{0,m}(K)$-module.
\end{remark}

One central result in this section is the following.

\begin{theorem}\label{bi-finite length}
Let $M$ be a holonomic bigraded generalized Eulerian $A_{n,m}(K)$-module with $m,n \geq 1$. 
Suppose that $\dim_K M_{(a_i,b_j)}
<\infty$ for some $a_1,b_1\geq 0$ and some $a_2,b_2\leq -1$. Then $\dim_K M_{(u,v)}<\infty$ for all $(u,v)\in \Z^2$. 
\end{theorem}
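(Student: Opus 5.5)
The plan is to reduce to the graded (one-variable-grading) finite-length result, most likely \cite[Theorem 8.1]{TPSR} or its analogue in \cite{TP2}. That result says: if $N$ is a holonomic graded generalized Eulerian $A_n(K)$-module with $\dim_K N_a<\infty$ for some $a\ge 0$ and $\dim_K N_b<\infty$ for some $b\le -1$, then $\dim_K N_u<\infty$ for all $u$. I will apply it twice, first along rows and then along columns, in the same spirit as the proof of Theorem~\ref{bi-rigid2}. The hypothesis is read as follows: the four components $M_{(a_1,b_1)}, M_{(a_1,b_2)}, M_{(a_2,b_1)}, M_{(a_2,b_2)}$ are finite dimensional, where $a_1,b_1\ge 0$ and $a_2,b_2\le -1$.

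\textbf{Step 1: row and column modules satisfy the graded hypotheses.} Fix $v\in\Z$ and consider $M_{*,v}=\bigoplus_u M_{(u,v)}$. By Remark~\ref{comp-grd-genEur} it is a graded generalized Eulerian $A_n^X(K)$-module. I also need it to be holonomic as an $A_n^X(K)$-module, and this is the main obstacle: Remark~\ref{comp-holo} only handles $A_{n,0}(K)$. Here is what I would try first. $M_{*,v}$ is naturally a module over $A_n^X(K)$, which commutes with $\mathcal{E}^Y_m$. Because $M$ is generalized Eulerian, $M_{*,v}$ is the generalized $v$-eigenspace of $\mathcal{E}^Y_m$ acting on $M$. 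So $M_{*,v}$ can be realized as the kernel of a suitable power of $\mathcal{E}^Y_m-v$ on the finitely generated part, which identifies it with a de Rham type homology of $M$ along the $Y$-variables. Concretely, I would use the Koszul/de Rham homology $H_j(\underline{\delta};M)$ and the filtration argument of the Remark following Proposition~\ref{bigrd-genEur-XY-con}. Holonomicity is preserved under de Rham homology (restriction/integration in \cite{BJ}), and Propositions~\ref{bigrd-genEur-partial},~\ref{bigrd-genEur-partial-con} pin down the grading. If this route becomes too technical, the fallback is to prove finite generation of $M_{*,v}$ over $A_n^X(K)$ directly via Bernstein filtration dimension counts on $\mathrm{Tot}(M)$. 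Once finite generation is known, \cite[Proposition 5.2]{BJ} gives holonomicity, exactly as in Remark~\ref{comp-holo}. The same argument applies to the column modules $M_{u,*}$ over $A_m^Y(K)$.

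\textbf{Step 2: rows $v=b_1$ and $v=b_2$.} The row module $M_{*,b_1}$ has finite-dimensional components in degrees $a_1\ge 0$ and $a_2\le -1$. The graded result then gives $\dim_K M_{(u,b_1)}<\infty$ for all $u\in\Z$. The identical argument for $M_{*,b_2}$ gives $\dim_K M_{(u,b_2)}<\infty$ for all $u$.

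\textbf{Step 3: columns.} Now fix an arbitrary $u\in\Z$. The column module $M_{u,*}$ is holonomic graded generalized Eulerian over $A_m^Y(K)$ by Step~1. It has finite-dimensional components in degrees $b_1\ge 0$ and $b_2\le -1$ by Step~2. The graded theorem therefore gives $\dim_K M_{(u,v)}<\infty$ for every $v$. Since $u$ was arbitrary, every bigraded component of $M$ is finite dimensional, which completes the proof.
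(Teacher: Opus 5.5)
Steps 2 and 3 are fine once Step 1 is granted, but Step 1 is false. It cannot be repaired, because the statement as written fails and the counterexample breaks your argument exactly at Step 1.

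\emph{The counterexample.} Take $n=m=2$ and $M=H^2_{(X_1,X_2)}(R)_{Y_1}\cong X_1^{-1}X_2^{-1}K[X_1^{-1},X_2^{-1}]\otimes_K K[Y_1^{\pm1},Y_2]$, the module computed in the example after Remark~\ref{localize_MGE}.
\begin{itemize}
\item It is holonomic, being a localization of a holonomic module.
\item It is bigraded Eulerian, being spanned by Laurent monomials on which $\mathcal{E}^X_2,\mathcal{E}^Y_2$ act by the degrees.
\item $M_{(u,v)}=0$ for all $u\ge -1$.
\item $\dim_K M_{(u,v)}=\infty$ for all $u\le -2$, since $K[Y_1^{\pm1},Y_2]_v$ contains all $Y_1^{v-k}Y_2^{k}$, $k\ge 0$.
\end{itemize}
With $a_1=b_1=0$ and $a_2=b_2=-1$ the four hypothesized components are zero, yet $\dim_K M_{(-2,0)}=\infty$.

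\emph{Where your argument breaks.} The row you feed into the graded theorem in Step 2 is $M_{*,0}=\bigoplus_{k\ge0}\bigl(X_1^{-1}X_2^{-1}K[X_1^{-1},X_2^{-1}]\bigr)Y_1^{-k}Y_2^{k}$. This is an infinite direct sum of nonzero $A^X_2(K)$-modules, so it is not even finitely generated, let alone holonomic. None of your proposed repairs can work:
\begin{itemize}
\item $M_{*,v}$ is only a generalized eigenspace of $\mathcal{E}^Y_m$, not a de Rham homology module. De Rham homology is a direct image along the $Y$-directions and is concentrated in a single $v$-degree.
\item Finite generation alone never gives holonomicity.
\item Remark~\ref{comp-holo} works only because for $m=0$ the ambient $M$ is itself a holonomic $A_n(K)$-module. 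For $m\ge 1$ this fails: already $R=\bigoplus_{\beta\in\N^m}K[X_1,\dots,X_n]\,Y_1^{\beta_1}\cdots Y_m^{\beta_m}$ is not finitely generated over $A^X_n(K)$.
\end{itemize}

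\emph{The paper's route.} The paper never slices $M$. It inducts on $n+m$ using the one-variable Koszul homology modules $H_j(X_n;M)$ and $H_j(Y_m;M)$. These remain bigraded generalized Eulerian and holonomic over $A_{n-1,m}(K)$, resp.\ $A_{n,m-1}(K)$, because holonomicity is defined through $\mathrm{Tot}$ and is preserved by \cite[3.4.2, 3.4.4]{BJ}. Finiteness is then pushed along lines by the sequences $0\to H_1(X_n;M)_{(u,v)}\to M_{(u-1,v)}\to M_{(u,v)}\to H_0(X_n;M)_{(u,v)}\to 0$. The base case $n=m=1$ is \cite[Theorem 2.12]{TPSR24}. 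The case $m=1$ (Lemma~\ref{bi-finite length_lemma}) ends with the isomorphisms $M_{(u,v)}\cong M_{(u,0)}$ for $v\ge 0$ and $M_{(u,v)}\cong M_{(u,-1)}$ for $v\le -1$ from Proposition~\ref{bigrd-genEur-XY-con}.

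This keeps holonomicity on the total module and is the right mechanism. However, the paper's inductive step has the same hole the example exposes. Finiteness of $M_{(a_2,b)}$ only gives finiteness of $H_1(X_n;M)_{(a_2+1,b)}$, and $a_2+1=0$ when $a_2=-1$, so the induction hypothesis cannot be invoked. In the example, $H_1(X_2;M)_{(u,v)}$ is $0$ for $u\ge 0$ and infinite dimensional for $u\le -1$.

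Under the stronger hypothesis $a_2\le -n$, $b_2\le -m$, the shifted corners satisfy $a_2+1\le -(n-1)$ and $b_2+1\le -(m-1)$ at every step, and the same Koszul induction goes through. That corrected statement is what you should aim to prove, and by this induction rather than by slicing into rows and columns.
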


We need the following lemma which crucially uses \cite[Theorem 2.12]{TPSR24}.
\begin{lemma}\label{bi-finite length_lemma}
Let $M$ be a holonomic bigraded generalized Eulerian $A_{n,1}(K)$-module with $n \geq 1$. Suppose that $\dim_K M_{(a,b_1)}, \dim_K M_{(a,b_2)}<\infty$ for some $b_1 \geq 0, b_2 \leq -1$ and all $a \in \Z$. Then $\dim_K M_{(u,v)}<\infty$ for all $(u,v)\in \Z^2$. 
\end{lemma}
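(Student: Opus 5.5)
The plan is to use the one-variable Koszul complexes in the $Y$-direction, which for $m=1$ are concentrated in a single $v$-degree. By Proposition \ref{bigrd-genEur-XY} and Proposition \ref{bigrd-genEur-XY-con} (with $m=1$), the modules $H_i(Y_1;M)$ for $i=0,1$ are bigraded generalized Eulerian, and $H_i(Y_1;M)_{(u,v)}\neq 0$ only if $v=0$. For every $(u,v)\in\Z^2$ we have the exact sequence of $K$-vector spaces
\[
0 \to H_1(Y_1; M)_{(u,v)} \to M_{(u,v-1)} \overset{\cdot Y_1}{\lrt} M_{(u,v)} \to H_0(Y_1; M)_{(u,v)} \to 0,
\]
which is the $m=1$ case of \eqref{rigid1}(1). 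The first key step reads this off: for every $v\neq 0$ both outer terms vanish, so multiplication by $Y_1$ gives an isomorphism $M_{(u,v-1)}\cong M_{(u,v)}$ of $K$-vector spaces.

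The second step is to chain these isomorphisms along each column $u$ fixed. For $v\geq 0$, the isomorphisms at $v=1,2,\dots$ give $M_{(u,v)}\cong M_{(u,0)}\cong M_{(u,b_1)}$, since $b_1\geq 0$. For $v\leq -1$, the isomorphisms at $v=-1,-2,\dots$ give $M_{(u,v)}\cong M_{(u,-1)}\cong M_{(u,b_2)}$, since $b_2\leq -1$. By hypothesis $\dim_K M_{(u,b_1)}$ and $\dim_K M_{(u,b_2)}$ are finite for every $u\in\Z$, so $\dim_K M_{(u,v)}<\infty$ for all $(u,v)\in\Z^2$.

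As a cross-check, the same conclusion on the negative side can be reached with $\delta_1$. By Proposition \ref{bigrd-genEur-partial-con}, $H_i(\delta_1;M)_{(u,v)}\neq 0$ only if $v=-1$. The sequence \eqref{rigid3}(1) then gives $M_{(u,v+1)}\cong M_{(u,v)}$ for $v\leq -2$, which is consistent with the first argument.

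The only delicate point, and the step I expect to be the main obstacle, is the degree bookkeeping. I must check that the grading convention for $H_i(Y_1;M)$ in Proposition \ref{bigrd-genEur-XY-con} places its support exactly in the degree $v$ of the target $M_{(u,v)}$, as in the displayed sequence. If the convention differs by a shift, the single exceptional degree moves by one. In that case the two half-lines become $v\geq -1$ and $v\leq -2$ (or similar), and I would instead take the cut supplied by $\partial/\partial Y_1$, whose exceptional degree is $v=-1$, so that the dividing line falls between $v=-1$ and $v=0$. With either convention, the hypotheses at one $b_1\geq 0$ and one $b_2\leq -1$ suffice.

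If the chaining argument needed supplementing, for instance to identify $M_{(u,0)}$ and $M_{(u,-1)}$ with modules over $A_n(K)$ that are finite-dimensional in each degree, I would fall back on holonomicity. By Remark \ref{comp-holo}, each $M_{*,v}$ is a holonomic graded generalized Eulerian $A_n(K)$-module, and \cite[Theorem 2.12]{TPSR24} controls the finiteness of its components. I do not expect this to be necessary, since the hypothesis already gives finiteness at every $a\in\Z$ in rows $b_1$ and $b_2$.
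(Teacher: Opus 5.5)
Your proof is correct, and its core is exactly the last step of the paper's proof. The paper also finishes by applying Proposition \ref{bigrd-genEur-XY-con} to the $Y_1$-Koszul sequence \eqref{Y}, which gives $M_{(u,v)}\cong M_{(u,0)}$ for $v\geq 0$ and $M_{(u,v)}\cong M_{(u,-1)}$ for $v\leq -1$.

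Where you differ is in what precedes that step. The paper first runs an induction on $n$: the base case $n=1$ is taken from \cite[Theorem 2.12]{TPSR24}, and the inductive step uses the $X_n$-Koszul sequence together with holonomicity of $H_j(X_n;M)$. The only output of that induction is finiteness of $\dim_K M_{(u,b_1)}$ and $\dim_K M_{(u,b_2)}$ for all $u\in\Z$. As you noticed, this is already part of the hypothesis. Your direct argument therefore loses nothing, and it shows that neither holonomicity nor the external citation is needed for the lemma as stated.

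Your worry about the degree convention also resolves in your favour. Take $x\in M_{(u,v-1)}$ with $Y_1x=0$. Then $\mathcal{E}^Y_1x=Y_1\delta_1x=\delta_1Y_1x-x=-x$, so $(\mathcal{E}^Y_1-(v-1))^a x=(-v)^a x$, which forces $v=0$. For $H_0$, the expansion $(Y_1\delta_1-v)^a\in(-v)^a+Y_1A_{n,1}(K)$ shows that every element of $M_{(u,v)}$ with $v\neq 0$ lies in $Y_1M_{(u,v-1)}$. So with the convention you wrote, the cut falls exactly between $v=-1$ and $v=0$. The $\delta_1$ cross-check and the holonomicity fallback can simply be dropped.
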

\begin{proof}
We use induction on $n$. For $n=1$, the result follows from \cite[Theorem 2.12]{TPSR24}. Next, suppose that $n \geq 1$. Consider the exact sequence 
\begin{equation}\label{exact_X}
0 \to H_1(X_n, M)_{(u,v)} \to M_{(u-1, v)} \xrightarrow{\cdot X_n} M_{(u,v)} \to H_0(X_n, M)_{(u,v)} \to 0. 
\end{equation}
From Definition \ref{bi-holo} and \cite[3.4.2, 3.4.4]{BJ} we get that $H_j(X_n, M)$ are bigraded holonomic $A_{n-1,1}(k)$-module for $j=0,1$. Now $\dim_K M_{(a,b)}< \infty$ for $b=b_1, b_2$ implies that 
\[\dim_K H_1(X_n, M)_{(a+1,b)}< \infty \quad \mbox{ and } \quad \dim_K H_0(X_n, M)_{(a,b)}< \infty.\] 
So by induction hypothesis it follows that $\dim_K H_j(X_n, M)_{(u,v)}< \infty$ for all $(u,v) \in \Z^2$ and for $j=0,1$. For $u=a+1$ and $u=a$, from \eqref{exact_X} we get $\dim_K M_{(a+1,b)}< \infty$ and $\dim_K M_{(a-1,b)}< \infty$, respectively. Repeating this process we obtain that $\dim_K M_{(u,b_1)}< \infty$ and $\dim_K M_{(u,b_2)}< \infty$. consider the exact sequence
 \begin{align}
 0 \to H_1(Y_1, M)_{(u,v)} \to M_{(u, v-1)} \xrightarrow{\cdot Y_1} M_{(u,v)} \to H_0(Y_1, M)_{(u,v)} \to 0 \label{Y}
 \end{align}
 Fix $u \in \Z$. Then \eqref{Y} together with Proposition \ref{bigrd-genEur-XY-con} imply that $M_{(u,v)} \cong M_{(u,0)}$ for all $v \geq 0$ and $M_{(u,v)} \cong M_{(u,-1)}$ for all $v \leq -1$. The statement now follows from the assumptions on $b_1, b_2$.
\end{proof}

\begin{proof}[Proof of Theorem \ref{bi-finite length}] We use induction on $r=n+m$. As $m,n \geq 1$ so $r \geq 2$. If $r=2$, then $m=1=n$. Hence $\dim_k M_{(u,v)}< \infty$ by \cite[Theorem 2.12]{TPSR24}.

We now assume that $r > 2$ and the result is true for $<r$. Without loss of generality assume that $n \geq 2$. Then we have an exact sequence 
\begin{equation}\label{fin-dim}
0 \to H_1(X_n, M)_{(u,v)} \to M_{(u-1, v)} \xrightarrow{\cdot X_n} M_{(u,v)} \to H_0(X_n, M)_{(u,v)} \to 0. 
\end{equation}
From Definition \ref{bi-holo} and \cite[3.4.2, 3.4.4]{BJ} we get that $H_j(X_n, M)$ are bigraded holonomic $A_{n-1,m}(k)$-module for $j=0,1$. Now $\dim_K M_{(a,b)}< \infty$ implies that 
\[\dim_K H_1(X_n, M)_{(a+1,b)}< \infty \quad \mbox{ and } \quad \dim_K H_0(X_n, M)_{(a,b)}< \infty.\] 
Then by the induction hypothesis it follows that $\dim_K H_j(X_n, M)_{(u,v)}< \infty$ for all $(u,v) \in \Z^2$ and for $j=0,1$. Putting $u=a+1$ and $u=a$, from \eqref{fin-dim} we have $\dim_K M_{(a+1,b)}< \infty$ and $\dim_K M_{(a-1,b)}< \infty$, respectively. Repeating this process we obtain that $\dim_K M_{(a+i,b)}< \infty$ and $\dim_K M_{(a-i,b)}< \infty$ for every $i \geq 0$. Consequently, $\dim_k M_{(u,b)}< \infty$ for all $u \in \Z$. If $m \geq 2$, then one can replace $X_n$ by $Y_m$ in \eqref{fin-dim} to obtain that $\dim_K M_{(a, b+j)}< \infty$ and $\dim_K M_{(a,b+j)}< \infty$ for every $j \geq 0$. 
Next, suppose that $m=1$. Then the result follows from Lemma \ref{bi-finite length_lemma}, where we essentially needs the specific choices for $(a_i,b_j)$. Thus in any case, $\dim_K M_{(u,v)}< \infty$ for all $(u,v) \in \Z^2$.
\end{proof}	

\begin{remark}\label{rmk_fin}
The proof of Lemma \ref{bi-finite length_lemma} shows that for any $a \in \mathbb{Z}$, if $\dim_K M_{(a,b_1)} < \infty$ for some $b_1 \ge 0$, then $\dim_K M_{(a,b)}$

\noindent
\begin{minipage}{0.5\textheight}
$< \infty$ for all $b \ge 0$. Likewise, if $\dim_K M_{(a,b_2)} < \infty$ for some $b_2 \le -1$, then $\dim_K M_{(a,b)} < \infty$ for all $b \le -1$. Applying this observation in the inductive step of the proof of Theorem \ref{bi-finite length} for $r=m+n>2$ with either $m=1$ or $n=1$, we obtain the following conclusion: \emph{if $\dim_K M_{(a,b)}<\infty$ for all $(a,b)$ in one of the sectors $\big\{\mathcal{NE}, \mathcal{NW^*}\cup \mathrm{Trun}(\mathcal{N}), \mathbb{Z}^2 \backslash (\mathcal{N}\cup \mathcal{E}), \mathcal{S^*E}\cup \mathrm{Trun}(\mathcal{E})\big\}$, then $\dim_K M_{(u,v)}<\infty$ for all $(u,v)$ in that sector.}
\end{minipage}
\begin{minipage}{0.225\textheight}
\begin{center}
	\begin{tikzpicture}[scale=0.1275]
	\draw[->] (-8.5,0)--(8.5,0) node[above]{$u$};
	\draw[->] (0,-8.5)--(0,8.5) node[right]{$v$};
	\draw[dotted, red] (-8.5,-1)--(8,-1)node[right]{\footnotesize $v=-1$};
	\draw[dotted, red] (-1,8)--(-1,-8.5) node[below]{\footnotesize $u=-1$};
	\draw[fill=cyan,fill opacity=0.35,draw=none] (0,8.5)--(0,0)--(8.5,0)--(8.5,8.5)--(0,8.5);
	\draw[fill=cyan,fill opacity=0.35,draw=none] (-1,8.5)--(-1,0)--(-8.5,0)--(-8.5,8.5)--(-1,8.5);
	\draw[fill=cyan,fill opacity=0.35,draw=none] (-1,-8.5)--(-1,-1)--(-8.5,-1)--(-8.5,-8.5)--(-1,-8.5);
	\draw[fill=cyan,fill opacity=0.35,draw=none] (0,-8.5)--(0,-1)--(8.5,-1)--(8.5,-8.5)--(0,-8.5);
	\node[blue,draw=none] at (6,5.5) {\tiny $\mathcal{NE}$};
	\node[blue,draw=none] at (-6,5.5) {\tiny $\mathcal{NW^*}\cup \mathrm{Trun}(\mathcal{N})$};
	\node[blue,draw=none] at (-6,-5.5) {\tiny $\mathbb{Z}^2 \backslash (\mathcal{N}\cup \mathcal{E})$};
	\node[blue,draw=none] at (6,-5.5) {\tiny $\mathcal{S^*E}\cup \mathrm{Trun}(\mathcal{E})$};
	\end{tikzpicture}
\end{center}
\end{minipage}
\end{remark}

We need the following lemma to prove the upcoming assertions.

\begin{lemma}\label{poly-two}
Let $f: \N^2 \to \N$ be a bivariate function in $u$ and $v$. Suppose there exist polynomials $P(X,Y), Q(X,Y) \in \QQ[X,Y]$ such that $f(u+1, v)- f(u,v)=P(u,v)$ and $f(u,v+1)-f(u,v)=Q(u,v)$ for all $u,v \gg 0$. Then $f$ is of polynomial-type. 
\end{lemma}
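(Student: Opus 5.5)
To prove Lemma \ref{poly-two}, I will reduce it to the one-variable fact that a function whose first difference is eventually a polynomial is eventually a polynomial. Here ``polynomial-type'' means that there is a polynomial $F(X,Y)\in\QQ[X,Y]$ with $f(u,v)=F(u,v)$ for all $u,v\gg 0$.

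First fix $N$ so that both difference identities hold for all $u,v\geq N$. The one-variable fact I will use is standard: for every $P\in\QQ[X]$ there is a polynomial $S$ with $S(X+1)-S(X)=P(X)$. To see this, write $P$ in the binomial basis $\binom{X}{k}$ and use $\Delta\binom{X}{k+1}=\binom{X}{k}$. In two variables the same works with the coefficients in $\QQ[Y]$: write $P(X,Y)=\sum_k c_k(Y)\binom{X}{k}$ and set $S(X,Y)=\sum_k c_k(Y)\binom{X}{k+1}$. Then $S(X+1,Y)-S(X,Y)=P(X,Y)$.

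Next, telescope in $u$ from $N$. For $u,v\geq N$ this gives
\[f(u,v)=f(N,v)+\sum_{t=N}^{u-1}P(t,v)=f(N,v)+S(u,v)-S(N,v).\]
The only task left is to show that $g(v):=f(N,v)$ agrees with a polynomial in $v$ for $v\geq N$. The second hypothesis, taken at $u=N$, gives $g(v+1)-g(v)=Q(N,v)$ for all $v\geq N$, and $Q(N,v)$ is a polynomial in $v$. By the one-variable fact, choose $T\in\QQ[Y]$ with $T(Y+1)-T(Y)=Q(N,Y)$. Then $g(v)-T(v)$ is constant for $v\geq N$. Hence $g(v)=T(v)+c$, and
\[F(X,Y):=T(Y)+c+S(X,Y)-S(N,Y)\]
agrees with $f$ on $\{u,v\geq N\}$.

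The main point to check is that only the $u=N$ slice of the second hypothesis is needed. Compatibility of $P$ and $Q$ is therefore automatic, and no further condition has to be verified. I do not expect any real obstacle. The only care required is to fix a single threshold $N$ valid for both identities and to handle the bookkeeping of the rational coefficients. I would also remark that $F$ is uniquely determined, since a polynomial vanishing on a quadrant of $\N^2$ is zero.
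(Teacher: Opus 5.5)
Your proposal is correct and is the same double-telescoping argument the paper uses. The paper telescopes in $u$ down to a fixed $u_0$, then in $v$ along the slice $u=u_0$, and simply asserts that the two resulting sums are polynomial functions. Your binomial-basis construction of the antidifferences $S$ and $T$ makes that step explicit.
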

\begin{note*}
Instead, suppose that $f$ is a function $(i) (-\N) \times \N \to \N, (ii) \N \times (-\N) \to \N, (iii) (-\N) \times (-\N) \to \N$ satisfying the above conditions for all $(i) u \ll 0, v \gg 0, (ii) u \gg 0, v \ll 0$ and $(iii) u, v \ll 0$ respectively. Even then using the same line of proof,one can show that $f$ is of polynomial-type. 
\end{note*}
\begin{proof}[Proof of Lemma \ref{poly-two}]
Suppose that there are some $u_0,v_0 \geq 0$ such that $f(u+1, v)- f(u,v)=P(u,v)$ and $f(u,v+1)-f(u,v)=Q(u,v)$ for all $u \geq u_0$ and $v \geq v_0$. Then $f(u,v)=f(u-1, v)+P(u-1, v)= \cdots =f(u_0, v)+\sum_{a=u_0}^{u-1}P(a,v)= f(u_0, v-1)+Q(u_0, v-1)+\sum_{a=u_0}^{u-1}P(a,v)=\cdots= f(u_0,v_0)+\sum_{b=v_0}^{v-1}Q(u_0,b)+\sum_{a=u_0}^{u-1}P(a,v)$ for all $u \geq u_0$ and $v \geq v_0$. Note that the last two terms are polynomial functions. The result follows. 
\end{proof}
\begin{remark}
Note that $X$-degree of the polynomial $\sum_{a=u_0}^{X-1}P(a,Y)$ is $\xdeg P[X,Y]+1$  and $Y$-degree of the polynomial $\sum_{b=v_0}^{Y-1}Q(u_0,b)$ is $\ydeg Q[X,Y]+1$, see \cite[Lemma 4.1.2]{BH}. Thus 
\begin{align*}
&\xdeg f=\max\{\xdeg P[X,Y]+1, \xdeg Q[X,Y]\} \\
\mbox{and} \quad & \ydeg f=\max\{\ydeg P[X,Y], \ydeg Q[X,Y]+1\}.
\end{align*} 
\end{remark}

We also have the following result from \cite[Proposition 2.3.6]{GCN}.
\begin{proposition}
Let $f,g: \N^r \to \Z$ be functions, $\gamma_1, \ldots, \gamma_r \in \N^r$ be linearly independent vectors and $\underline{\beta} \in \N^r$, such that for all $\underline{\alpha} \in \N^r$ and for some $i=1, \ldots, r$ it holds \[f(\underline{\alpha})-f(\underline{\alpha}-\gamma_i)=g(\underline{\alpha}).\]
If $g$ is a quasi polynomial on $\underline{\beta}, \gamma_1, \ldots, \gamma_r$ of polynomial of degree $d$, then $f$ is also a quasi polynomial on $\underline{\beta}, \gamma_1, \ldots, \gamma_r$ of polynomial of degree $d+1$.
\end{proposition}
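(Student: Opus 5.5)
The plan is to telescope the difference equation along the single direction $\gamma_i$ and then sum the polynomial pieces of $g$ by a Faulhaber-type argument, as in Lemma \ref{poly-two} but in one direction only. I read the statement as in \cite{GCN}: $f$ is regarded as a function on $\Z^r$ that vanishes off $\N^r$, which is what makes $f(\underline{\alpha}-\gamma_i)$ meaningful for every $\underline{\alpha}\in\N^r$. I also take ``quasi-polynomial on $\underline{\beta},\gamma_1,\ldots,\gamma_r$'' to mean the following. Let $L=\bigoplus_j \Z\gamma_j$, which has finite index in $\Z^r$ by linear independence. On each coset $\rho+L$, and for points $\underline{\alpha}=\underline{\beta}+\rho+\sum_j t_j\gamma_j$ with all $t_j\ge 0$, the function agrees with a polynomial $P_\rho(t_1,\ldots,t_r)$.

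\textbf{Step 1 (telescoping).} Fix $\underline{\alpha}\in\N^r$. Iterating the relation gives
\[
f(\underline{\alpha})=f(\underline{\alpha}-N\gamma_i)+\sum_{k=0}^{N-1}g(\underline{\alpha}-k\gamma_i).
\]
Since $\gamma_i\in\N^r\setminus\{0\}$, the point $\underline{\alpha}-N\gamma_i$ leaves $\N^r$ for $N\gg0$, so the first term vanishes. Hence $f(\underline{\alpha})=\sum_{k\ge0}g(\underline{\alpha}-k\gamma_i)$, a finite sum. This step is the only place where a single index $i$ enters, and it replaces the need for relations in the other directions.

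\textbf{Step 2 (splitting the sum).} Write $\underline{\alpha}=\underline{\beta}+\rho+\sum_j t_j\gamma_j$ with $t_j\ge0$. Split the sum into two parts:
\begin{itemize}
\item[(a)] the indices $0\le k\le t_i$, for which $\underline{\alpha}-k\gamma_i$ stays in $\underline{\beta}+\rho+\sum_j\N\gamma_j$;
\item[(b)] the indices $k>t_i$, for which $\underline{\alpha}-k\gamma_i$ lies in $\N^r$ but outside the translated cone.
\end{itemize}
In part (a), $g(\underline{\alpha}-k\gamma_i)=P_\rho(t_1,\ldots,t_i-k,\ldots,t_r)$. By \cite[Lemma 4.1.2]{BH}, summing over $k$ yields a polynomial in $t$ whose degree in $t_i$ goes up by exactly one. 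Since $\deg_{t_i}P_\rho\le d$ and the sum is degree $\le d$ in the remaining variables, this gives total degree $d+1$.

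Part (b) equals $f(\underline{\alpha}-(t_i+1)\gamma_i)$ by Step 1 again. This is the value of $f$ on the ``$t_i=-1$'' layer, so it is independent of $t_i$ and depends only on $(t_j)_{j\ne i}$ and $\rho$.

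\textbf{Step 3 (the boundary layer).} I expect the main obstacle to be showing that the boundary term in part (b) is itself a quasi-polynomial in $(t_j)_{j\ne i}$ of degree at most $d+1$. My first attempt is to iterate Step 1 on that layer. The layer $\underline{\beta}+\rho-\gamma_i+\sum_{j\ne i}\N\gamma_j$ meets $\N^r$ only in a region where the $\gamma_i$-telescoping reaches $0$ after boundedly many steps; the bound depends on $\underline{\beta}$ and $\rho$ only, because $\gamma_i$ has a positive coordinate. Each of those boundedly many values of $g$ sits at points of the form $\underline{\beta}'+\rho'+\sum_{j\ne i}t_j\gamma_j$, where $\underline{\beta}'$ ranges over a finite set. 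Enlarging $\underline{\beta}$ if necessary, as the definition in \cite{GCN} allows, puts these points inside the region where $g$ is quasi-polynomial. Then the boundary term is a finite sum of polynomials of degree $\le d$, hence of degree $\le d+1$. If this enlargement does not work directly, the fallback is to absorb the layer by replacing $\underline{\beta}$ with $\underline{\beta}+c\gamma_i$ for a suitable constant $c$.

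\textbf{Step 4 (conclusion).} Combining parts (a) and (b) on each coset $\rho+L$ gives a polynomial of degree $d+1$. Its exact degree comes from the top $t_i$-coefficient produced in part (a), which is nonzero whenever the leading part of $P_\rho$ is. Therefore $f$ is a quasi-polynomial on $\underline{\beta},\gamma_1,\ldots,\gamma_r$ of degree $d+1$.
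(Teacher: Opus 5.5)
\textbf{There is a genuine gap in Step 3, and it cannot be closed.} The paper gives no proof of this proposition; it only quotes \cite[Proposition 2.3.6]{GCN}. So your argument has to stand on its own.

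Steps 1 and 2 are correct. Step 4 would also follow once the boundary term $f(\underline{\alpha}-(t_i+1)\gamma_i)$ is known to be a polynomial of degree at most $d+1$ in $(t_j)_{j\neq i}$, since a $t_i$-free term cannot cancel the top part produced in (a). Step 3 fails for two reasons.

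First, your claim that the $\gamma_i$-telescoping from the layer $\underline{\beta}+\rho-\gamma_i+\sum_{j\neq i}\N\gamma_j$ stops after a number of steps depending only on $\underline{\beta}$ and $\rho$ is false in general. The number of steps is $\min_p\lfloor\alpha_p/(\gamma_i)_p\rfloor+1$, taken over the coordinates $p$ with $(\gamma_i)_p>0$. This is bounded on the layer only if some such $p$ has $(\gamma_j)_p=0$ for every $j\neq i$. For instance, with $\gamma_1=(1,1)$, $\gamma_2=(0,1)$, $\underline{\beta}=0$ and $i=2$, the layer point $(t_1,t_1-1)$ needs $t_1$ steps.

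Second, and more seriously, every value of $g$ entering the boundary term is taken at a point outside the region on which $g$ is assumed to be quasi-polynomial. The hypothesis says nothing about $g$ there. Enlarging $\underline{\beta}$ goes the wrong way: it shrinks the region where $g$ is controlled, and the conclusion is required for the given $\underline{\beta}$. Replacing $\underline{\beta}$ by $\underline{\beta}+c\gamma_i$ only moves the uncontrolled layer.

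In fact, under the reading you adopt ($f$ zero off $\N^r$, a single direction $i$, $g$ controlled only on the translated cones), the statement is false. Take:
\begin{itemize}
\item $r=2$, $\gamma_1=(1,0)$, $\gamma_2=(0,1)$, so $L=\Z^2$ and a quasi-polynomial is a single polynomial on $\underline{\beta}+\N^2$;
\item $\underline{\beta}=(0,1)$ and $i=2$;
\item $g(x,y)=1$ for $y\ge 1$ and $g(x,0)=(-1)^x$;
\item $f(x,y)=\sum_{s=0}^{y}g(x,s)$ on $\N^2$, and $f=0$ elsewhere.
\end{itemize}
Then $f(\underline{\alpha})-f(\underline{\alpha}-\gamma_2)=g(\underline{\alpha})$ for all $\underline{\alpha}\in\N^2$, and $g\equiv 1$ on $\underline{\beta}+\N^2$. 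Yet $f(x,y)=(-1)^x+y$ on $\underline{\beta}+\N^2$, which is not a polynomial, and no larger base point helps.

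What your argument lacks is control of the initial data on a face. It goes through under either of these extra hypotheses:
\begin{itemize}
\item $f$ agrees with a polynomial of degree at most $d+1$ in $(t_j)_{j\neq i}$ on each face $\underline{\beta}+\rho+\sum_{j\neq i}\N\gamma_j$. Then you telescope from $t_i$ down to $0$ inside the cone.
\item As in Lemma \ref{poly-two}, there are difference relations with quasi-polynomial right-hand sides in every direction $\gamma_j$. Then you telescope from the corner $\underline{\beta}+\rho$ without ever leaving the cone.
\end{itemize}
Whatever hypothesis or convention makes the version in \cite{GCN} true is not visible in the statement as transcribed here. A proof has to state it explicitly.
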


\begin{theorem}\label{bi-poly}
Let $n,m\geq 1$. Let $M = \bigoplus_{(u,v) \in \Z^2}M_{(u,v)}$ be a bigraded holonomic generalized Eulerian $A_{n,m}(K)$-module. Then there exist polynomials $P^a_M(X,Y)\in \mathbb{Q}[X,Y]$ for $a=1,2,3,4$ such that 

\noindent
\begin{minipage}{0.636\textheight}
	\begin{enumerate}[\rm (i)]
		\item $P^1_M(u,v) = \dim_K M_{(u,v)}$  for all  $u,v \gg 0,$ if $\dim_K M_{(u_1,v_1)})$ is finite for some $(u_1,v_1)\in \mathcal{NE}$.
		\item $P^2_M(u,v) = \dim_K M_{(u,v)} $   for all  $u \ll 0,  v \gg 0,$  if $\dim_K M_{(u_1,v_1)})$ is finite for some $(u_1,v_1)\in \mathcal{NW^*}$
		\item $P^3_M(u,v) = \dim_K M_{(u,v)}$   for all  $  u,v \ll 0,$  if $\dim_K M_{(u_1,v_1)})$ is finite for some $(u_1,v_1)\in \mathcal{S^*W^*}$.
		\item $P^4_M(u,v) = \dim_K M_{(u,v)}$  for all  $  u \gg 0, v \ll 0,$  if $\dim_K M_{(u_1,v_1)})$ is finite for some $(u_1,v_1)\in \mathcal{N^*E}$.
	\end{enumerate}
\end{minipage}
\begin{minipage}{0.11\textheight}
\begin{center}
	\begin{tikzpicture}[scale=0.122]
	\draw[->] (-7.5,0)--(8.5,0) node[below]{$u$};
	\draw[->] (0,-7.5)--(0,8.5) node[above]{$v$};
	\draw[-](3,8.5)--(3,2.5)-- (8.5,2.5);
	\draw[-](-3,8.5)--(-3,2.5)-- (-8.5,2.5);
	\draw[-](-3,-8.5)--(-3,-2.5)-- (-8.5,-2.5);
	\draw[-](3,-8.5)--(3,-2.5)-- (8.5,-2.5);
	\draw[fill=cyan,fill opacity=0.35,draw=none] (3,8.5)--(3,2.5)--(8.5,2.5)--(8.5,8.5)--(3,8.5);
	\draw[fill=cyan,fill opacity=0.35,draw=none] (-3,8.5)--(-3,2.5)--(-8.5,2.5)--(-8.5,8.5)--(-3,8.5);
	\draw[fill=cyan,fill opacity=0.35,draw=none] (-3,-8.5)--(-3,-2.5)--(-8.5,-2.5)--(-8.5,-8.5)--(-3,-8.5);
	\draw[fill=cyan,fill opacity=0.35,draw=none] (3,-8.5)--(3,-2.5)--(8.5,-2.5)--(8.5,-8.5)--(3,-8.5);
	\node[blue,draw=none] at (6,5.5) {\tiny $P^1_M(u,v)$};
	\node[blue,draw=none] at (-6,5.5) {\tiny $P^2_M(u,v)$};
	\node[blue,draw=none] at (-6,-5.5) {\tiny $P^3_M(u,v)$};
	\node[blue,draw=none] at (6,-5.5) {\tiny $P^4_M(u,v)$};
	\end{tikzpicture}
\end{center}
\end{minipage}
\end{theorem}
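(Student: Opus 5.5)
I would argue by induction on $r=n+m\geq 2$, treating region (i) in detail; the other three regions are handled the same way using the sign-reversed version of Lemma \ref{poly-two} recorded in the Note after it. The recursion tool is Lemma \ref{poly-two}: it suffices to show that the first differences $\dim_K M_{(u,v)}-\dim_K M_{(u-1,v)}$ and $\dim_K M_{(u,v)}-\dim_K M_{(u,v-1)}$ agree with polynomials for $u,v\gg 0$. Before anything else I need finiteness of the components. By Remark \ref{rmk_fin}, applied to the sector $\mathcal{NE}$, the hypothesis $\dim_K M_{(u_1,v_1)}<\infty$ for one $(u_1,v_1)\in\mathcal{NE}$ gives $\dim_K M_{(u,v)}<\infty$ for all $(u,v)\in\mathcal{NE}$. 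More precisely, I would rerun the inductive step of Theorem \ref{bi-finite length} restricted to that sector. I would also check that the finiteness propagates to the relevant components of $H_j(X_n;M)$ and $H_j(Y_m;M)$, since these are subquotients of the $M_{(u,v)}$.

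For the difference equations I use the exact sequences \eqref{rigid1}. From \eqref{rigid1}(2), for $(u,v)$ deep in $\mathcal{NE}$,
\[\dim_K M_{(u,v)}-\dim_K M_{(u-1,v)}=\dim_K H_0(X_n;M)_{(u,v)}-\dim_K H_1(X_n;M)_{(u,v)}.\]
The analogous identity holds in the $v$-direction from \eqref{rigid1}(1). Whenever $n\geq 2$, the modules $H_j(X_n;M)$ are holonomic (by \cite[3.4.2, 3.4.4]{BJ}) and bigraded generalized Eulerian $A_{n-1,m}(K)$-modules by Proposition \ref{bigrd-genEur-XY}. Their components in $\mathcal{NE}$ are finite-dimensional by the previous paragraph, so the induction hypothesis makes the right-hand side a polynomial for $u,v\gg 0$.

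The base cases, where $n=1$ or $m=1$, need separate treatment. By Proposition \ref{bigrd-genEur-XY-con}, $H_j(X_1;M)_{(u,v)}=0$ for $u\neq 0$, so the $u$-difference vanishes for $u\geq 1$. Hence $\dim_K M_{(u,v)}=\dim_K M_{(0,v)}$ for $u\geq 0$, and the difference is the zero polynomial. The same holds in the $v$-direction when $m=1$. If $n=m=1$, the function is then constant on $\mathcal{NE}$. If $n=1<m$, the $v$-difference comes from $H_j(Y_m;M)$, an $A_{1,m-1}(K)$-module, which is covered by induction on $r$. So the induction should run on $r$ with $n,m\geq 1$ throughout, and the case $m-1=0$ is reduced to the $n=1$ or $m=1$ base situation.

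I expect the main obstacle to be keeping finiteness available for the auxiliary modules $H_j(X_n;M)$ and $H_j(Y_m;M)$, since the hypothesis only controls one component of $M$. The first thing I would try is to derive full finiteness on the quadrant from Remark \ref{rmk_fin}, then apply the induction to the Koszul homology modules, which inherit finiteness componentwise from $M$. The remaining regions (ii)–(iv) follow by the same argument with the corresponding sequences \eqref{rigid2}–\eqref{rigid4}. These sequences use $\partial_n$ and $\delta_m$, so in those cases I invoke Propositions \ref{bigrd-genEur-partial} and \ref{bigrd-genEur-partial-con} in place of the $X$/$Y$ versions, keeping track of the degree shifts $(-1,0)$ and $(0,-1)$.
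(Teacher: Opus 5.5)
Your proposal is correct and follows essentially the paper's route: finiteness on the relevant sector via Remark \ref{rmk_fin}, induction on $n+m$ through the one-variable Koszul sequences, and Lemma \ref{poly-two} to assemble the first differences into a polynomial. The deviations are minor. You use $X_n, Y_m$ rather than $\partial_n, \delta_m$ for $\mathcal{NE}$. You also treat $n=1$ or $m=1$ directly via Proposition \ref{bigrd-genEur-XY-con}, instead of descending to $A_{n,0}$ or $A_{0,m}$ and quoting \cite[Theorem 11.1]{TP2}. Your explicit check that the auxiliary Koszul modules inherit finiteness only on the sector is more careful than the paper's blanket assertion of finiteness for all $(u,v)$.
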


\begin{proof}
Applying Remark \ref{rmk_fin} to (i)-(iv), we obtain that $\dim_K M_{(u,v)}$ is finite for all $(u,v)$ in the respective regions.

\vspace{0.15cm}
We use induction on $r=n+m$. If $r=1$, then either $n=1,m=0$ or $n=0, m=1$. In either case (equivalent to the standard graded case), the result follows from \cite[Theorem 11.1]{TP2}.

If $r=2$, then $(i)~n=2, m=0, (ii)~n=1,m=1$, or $(iii)~n=0, m=2$. For the two cases $(i)$ and $(iii)$, the result follows from \cite[Theorem 11.1]{TP2}. Now let us assume that $m=1=n$. Consider the exact sequence 
	\begin{equation}\label{tm1a}
		0 \rt H_1(\partial_1; M)_{(u,v)} \rt M_{(u+1,v)} \overset{\cdot \partial_1}{\lrt} M_{(u,v)} \rt H_0(\partial_1; M)_{(u,v)}\rt 0.
	\end{equation}
	For $i=0,1$, as $H_i(\partial^X_1, M)_{(u,v)}=0$ for all $u \neq -1$ and any $v \in \Z$ by Proposition \ref{bigrd-genEur-partial-con} so from the exact sequence \eqref{tm1a} it follows that
	\begin{align*}
	&M_{(u,v)} \cong M_{(-1,v)} \quad \text{for all } u \leq -1 \text{ and any } v\in \Z\\
\text{and} \quad	&M_{(u,v)} \cong M_{(0,v)} \quad \hspace{0.2cm} \text{for all } u \geq 0 \text{ and any } v\in \Z.
	\end{align*}
	Now consider the exact sequence 
	\begin{equation}\label{tm1b}
		0 \rt H_1(\delta_1; M)_{(u,v)} \rt M_{(u,v+1)} \overset{\cdot \delta_1}{\lrt} M_{(u,v)} \rt H_0(\delta_1; M)_{(u,v)}\rt 0.
	\end{equation}
	For $i=0,1$, as $H_i(\delta_1, M)_{(u,v)}=0$ for all $v \neq -1$ and any $u \in \Z$ by Proposition \ref{bigrd-genEur-partial-con} so from the exact sequence \eqref{tm1b} we get 
	\begin{align*}
		& M_{(u,v)} \cong M_{(u,-1)} \quad \text{for all } v \leq -1 \text{ and any } u\in \Z\\
	\text{and} \quad	& M_{(u,v)} \cong M_{(u,0)} \quad \hspace{0.2cm} \text{for all } v \geq 0 \text{ and any } u\in \Z.
	\end{align*}
Consequently, 
\[ M_{(u,v)}\cong \begin{cases} 
      M_{(0,0)} & \mbox{if } u, v \geq 0 \\
      M_{(-1,0)} & \mbox{if } u\leq -1 \mbox{ and } v \geq 0 \\
      M_{(-1,-1)} & \mbox{if } u, v \leq -1 \\
      M_{(0,-1)} & \mbox{if } u\geq 0 \mbox{ and } v \leq -1 
   \end{cases}
\]
Take $P^1_M(u,v)= M_{(0,0)}, P^2_M(u,v)=M_{(-1,0)}, P^3_M(u,v)=M_{(-1,-1)}, P^4_M(u,v)=M_{(0,-1)}$ for the result.

Next, suppose that $r > 2$ and the result holds true for $<r$. The case when either $m=0$ or $n=0$ follows from \cite[Theorem 11.1]{TP2}. So we can assume that $m, n \geq 1$. Consider the exact sequences
	\begin{equation}
		\begin{split}
			0 \rt H_1(\partial_n; M)_{(u,v)} \rt M_{(u+1,v)} \overset{\cdot \partial_n}{\lrt} M_{(u,v)} \rt H_0(\partial_n; M)_{(u,v)}\rt 0.\\
			0 \rt H_1(\delta_m; M)_{(u,v)} \rt M_{(u,v+1)} \overset{\cdot \delta_m}{\lrt} M_{(u,v)} \rt H_0(\delta_m; M)_{(u,v)}\rt 0.
		\end{split}
	\end{equation}

	For $j=0, 1$, we have $H_j(\partial_n; M)(-1,0)$ is a bigraded generalized Eulerian $A_{n-1,m}(K)$-module and $H_j(\delta_m; M)(0,-1)$ is a bigraded generalized Eulerian $A_{n,m-1}(K)$-module. For every pair $(u,v) \in \Z^2$, since $\dim_K M_{(u,v)}< \infty$, we get that $\dim_K H_j(\partial_n, M)_{(u,v)}< \infty$ and $\dim_K H_j(\delta_m, M)_{(u,v)}< \infty$ for $j=0,1$. Therefore, by induction hypothesis 
	\begin{align*}
	\dim_K M_{(u+1, v)}-\dim_K M_{(u,v)}&=\dim_K H_1(\partial_1, M)_{(u,v)}- \dim_K H_0(\partial_1, M)_{(u,v)}\\
	&=P^1_{H^X_1}(u,v)-P^1_{H^X_0}(u,v) \mbox{ for all } u,v \gg 0,\\
 \mbox{and } \quad	\dim_K M_{(u, v+1)}-\dim_K M_{(u,v)}&=\dim_K H_1(\delta_1, M)_{(u,v)}- \dim_K H_0(\delta_1, M)_{(u,v)}\\
		&=P^1_{H^Y_1}(u,v)-P^1_{H^Y_0}(u,v) \mbox{ for all } u,v \gg 0.
	\end{align*}
Hence by Lemma \ref{poly-two}, $\dim_K M_{(u,v)}$ is given by a polynomial in two variables for all $u,v \gg 0$. Using similar arguments and induction hypothesis we further obtain that $\dim_K M_{(u,v)}$ are also given by polynomials for $u \ll 0, v \gg 0$; $u , v \ll0$ and $u \gg 0, v \ll 0$. This establishes the result.
\end{proof}

\begin{example} 
Let $T = K[X_1,\ldots, X_n, Y_1, \ldots, Y_m] = \bigoplus_{{(u,v)} \in \N^2} T_{(u,v)}$. Then it is well-known that
\begin{enumerate}[\rm(1)]
	\item For all $(u,v) \in \N^2$,
	\[
	\dim_K T_{(u,v)} = \binom{u + n -1}{n-1}\binom{v + m -1}{m-1}. 	\]
	\item Recall that 
	\[E:=E_T(K)=(X_1^{-1}\cdots X_n^{-1}Y_1^{-1}\cdots Y_m^{-1})K[X_1^{-1}, \ldots, X_n^{-1}, Y_1^{-1}, \ldots, Y_m^{-1}],\]
	where $E_T(K)$ denotes the injective hull of $K$ over $T$. Notice $E_T(K)$ has a natural bigrading induced from the ring $R$. Thus
	\begin{align*}
	\dim_K E_{(u,v)} = \binom{(n-u) + n -1}{n-1}\binom{(m-v) + m -1}{m-1} \quad \text{ for all } (u,v)\leq(-n,-m).
	\end{align*}
\end{enumerate} 
\end{example}

\begin{note*} 
For sets of arbitrary integers $\{i_1,\ldots,i_n\}$ and $\{j_1,\ldots, j_m\}$, we have 
	\begin{align*}
	&\e^X_n \cdot X_1^{i_1} \cdots X_n^{i_n}= (i_1+ \cdots+i_n)X_1^{i_1} \cdots X_n^{i_n} \\
	\text{and} \quad&  \e^Y_m \cdot Y_1^{j_1} \cdots Y_m^{j_m}= (j_m+ \cdots+j_m) Y_1^{j_1} \cdots Y_m^{j_m}.
	\end{align*}
	Consequently, both $T$ and $E(n,m)$ are bigraded Eulerian $A_{n,m}(K)$-module. Observe that their nonzero bigraded pieces can be represented by the regions $\mathcal{NE}$ and $\mathcal{S^*W^*}$ (see Notations \ref{notations}), respectively. Thus they satisfy the rigidity property stated in Theorem \ref{bi-rigid}. 
\end{note*}

We need the following lemma. 
\begin{lemma}\label{X_inv_D_act}
	Let $R=K[X_1, \ldots, X_n]$ be a polynomial ring and $A_n(K):=R \langle \partial_1, \ldots, \partial_n\rangle$ be the $n^{th}$ Weyl algebra over $K$, where $\partial_i=\partial/\partial X_i$ for $i=1, \ldots,n$. Then for $c, a_1, \ldots, a_n, b_1, \ldots, b_n \geq 0$,
	\begin{enumerate}[\rm i)]
		\item $\partial_i^c(X_1^{-a_1} \cdots X_n^{-a_n})=(-1)^c\frac{(a_i+c-1)!}{(a_i-1)!}X_i^{-a_i-c} \cdot \Big(\prod_{\substack{j=1\\j \neq i}}X_j^{-a_j}\Big)$,
		\item $\partial_i^c(X_1^{b_1} \cdots X_n^{b_n})=\frac{b_i!}{(b_i-c)!}X_i^{b_i-c} \cdot \Big(\prod_{\substack{j=1\\j \neq i}}X_j^{b_j}\Big)$.
	\end{enumerate}
\end{lemma}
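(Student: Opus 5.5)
The plan is to reduce both formulas to a one-variable computation and then induct on $c$. Since the formulas involve negative powers, I will work throughout in the localization $R_{X_1\cdots X_n}$, with $\partial_i$ acting by the usual quotient rule. This is the natural action of $A_n(K)$ on a localization of an $A_n(K)$-module, as in Remark \ref{localize_MGE}.

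\emph{Step 1: reduction to one variable.} For $j\neq i$ we have $\partial_i(X_j)=0$. By the Leibniz rule, $\partial_i$ is therefore linear over the subring $K[X_j^{\pm 1}\mid j\neq i]$. Applying this to the monomials in (i) and (ii),
\[
\partial_i^c\Big(X_i^{k}\prod_{j\neq i}X_j^{e_j}\Big)=\Big(\prod_{j\neq i}X_j^{e_j}\Big)\,\partial_i^c\big(X_i^{k}\big)
\]
for every integer $k$. So both statements reduce to computing $\partial_i^c(X_i^k)$ for $k=-a_i$ and for $k=b_i$.

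\emph{Step 2: the basic identity.} The key one-step fact is $\partial_i(X_i^k)=kX_i^{k-1}$ for all $k\in\Z$. For $k\ge 0$ this is the usual power rule. For $k<0$ it follows by differentiating $X_i^{k}\cdot X_i^{-k}=1$ with the Leibniz rule, which gives $X_i^{-k}\,\partial_i(X_i^{k})=-(-k)X_i^{-k-1}X_i^{k}$, i.e. $\partial_i(X_i^k)=kX_i^{k-1}$.

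\emph{Step 3: induction on $c$.} Iterating Step 2 gives
\[
\partial_i^c(X_i^k)=k(k-1)\cdots(k-c+1)\,X_i^{k-c}.
\]
For $k=b_i\ge 0$ the product is $b_i!/(b_i-c)!$ when $c\le b_i$, which is (ii). When $c>b_i$ the product contains the factor $0$, which matches the convention $1/(\text{negative})!=0$.

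For $k=-a_i$ the product is
\[
(-a_i)(-a_i-1)\cdots(-a_i-c+1)=(-1)^c\,a_i(a_i+1)\cdots(a_i+c-1)=(-1)^c\frac{(a_i+c-1)!}{(a_i-1)!},
\]
and the exponent becomes $-a_i-c$, which is (i). In the induction step one simply applies $\partial_i$ once more and uses Step 2, so that the factor $-(a_i+c)$ (respectively $b_i-c$) is appended to the product.

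\emph{Expected obstacle.} The only delicate point is the boundary cases, not the computation. When $a_i=0$, formula (i) contains $(a_i-1)!=(-1)!$, and when $c>b_i$, formula (ii) contains $(b_i-c)!$ with a negative argument. I would state explicitly that $a_i\ge 1$ is intended in (i), or else read $(c-1)!/(-1)!$ as $0$ for $c\ge1$ (consistent with $\partial_i^c(1)=0$), and that in (ii) the coefficient is read as $0$ when $c>b_i$. With these conventions, the falling-factorial expression from Step 3 matches the stated formulas in all cases.
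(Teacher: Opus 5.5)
Your proposal is correct and follows essentially the same route as the paper: compute $\partial_i(X_i^{-d})=-dX_i^{-d-1}$, iterate to get the falling-factorial coefficients, and pull out the factors $X_j^{\pm e_j}$ for $j\neq i$ by the Leibniz rule. The only difference is that the paper gets the one-step negative-power rule from the quotient rule, whereas you differentiate $X_i^kX_i^{-k}=1$. Your treatment of the boundary cases ($a_i=0$ in (i) and $c>b_i$ in (ii)) is a useful addition: the paper's computation tacitly assumes $c,d\ge 1$, and it handles $c>b_i$ only by the separate remark that $\partial_i^c(X_i^d)=0$.
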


\begin{proof}
For any $f,g \in R=K[X_1, \ldots, X_n]$ with $f \neq 0$, we have \begin{equation}\label{inv_rel}
\partial_i\left(\frac{g}{f^k}\right)=\frac{\partial_i(g)f-k\partial_i(f)g}{f^{k+1}}=\partial_i(g)f^{-k}-kg\partial_i(f)f^{-k-1}.
\end{equation}
Clearly $\partial_i(X_j^{-d})=\partial_i(1)X_j^{-d}-d\partial_i(X_j)X_j^{-d-1}=-dX_i^{-d-1}$ if $j=i$, and $\partial_i(X_j^{-d})=0$ if $j \neq i$. Now $\partial_i^c(X_i^{-d})=\partial_i^{c-1} \cdot \partial_i(X_i^{-d})=\partial_i^{c-1} \left(-dX_i^{-d-1}\right)=-d\partial_i^{c-2} \cdot \partial_i\left(X_i^{-d-1}\right)=(-d)(-d-1)\partial_i^{c-2}\left(X_i^{-d-2}\right)=\cdots=(-d)(-d-1) \cdots (-d-c+1)X_i^{-d-c}=(-1)^c\frac{(d+c-1)!}{(d-1)!}X_i^{-d-c}$ for any $c, d \geq 1$. Using \eqref{inv_rel}, and the fact $\partial_i(fg)=f\partial_i(g)+\partial_i(f) g$ for all $f,g \in R$ repeatably, we get that
\[\partial_i^c(X_1^{-a_1} \cdots X_n^{-a_n})= \partial_i^c(X_i^{-a_i}) \cdot \Big(\prod_{\substack{j=1\\j \neq i}}X_j^{-a_j}\Big)=(-1)^c\frac{(a_i+c-1)!}{(a_i-1)!}X_i^{-a_i-c} \cdot \Big(\prod_{\substack{j=1\\j \neq i}}X_j^{-a_j}\Big).\]
Again for any $c,d \geq 1$, we have $\partial_i^c(X_i^{d})=\partial_i^{c-1} \cdot \partial_i(X_i^{d})=\partial_i^{c-1} \left(dX_i^{d-1}\right)=d\partial_i^{c-2} \cdot \partial_i\left(X_i^{d-1}\right)=d(d-1)\partial_i^{c-2}\left(X_i^{d-2}\right)=\cdots=d(d-1) \cdots (d-c+1)X_i^{d-c}=\frac{d!}{(d-c)!}X_i^{d-c}$ if $c \leq d$, else $\partial_i^c(X_i^{d})=0$. Therefore,
\[\partial_i^c(X_1^{b_1} \cdots X_n^{b_n})= \partial_i^c(X_i^{b_i}) \cdot \Big(\prod_{\substack{j=1\\j \neq i}}X_j^{b_j}\Big)=\frac{b_i!}{(b_i-c)!}X_i^{b_i-c} \cdot \Big(\prod_{\substack{j=1\\j \neq i}}X_j^{b_j}\Big).\]
\end{proof}

We now recall the following well-known result. 

\begin{lemma}\label{quo_poly_rel}
Take two subsets $W \subseteq \{1, \ldots, n\}$ and $V \subseteq \{1, \ldots, m\}$.  Consider the left ideal $\mathfrak{b}$ in $\D:=A_{(n, m)}(K)$ generated by $\{X_i, Y_j \mid i \in W, j \in V\}$ and $\{\partial_{i'}, \delta_{j'} \mid i' \notin W, j' \notin V\}$. Then 	
\[\frac{\D}{\D \mathfrak{b}}=K\big\{X_{i'}^{a_{i'}}Y_{j'}^{b_{j'}}\partial_i^{c_i}\delta_j^{d_j} \mid \mbox{ for some }i \in W, j \in V,i' \notin W, j' \notin V\big\}\]
and the map
\[\varphi: \frac{\mathcal{D}}{\mathcal{D}\mathfrak{b}}\to  F:=\left(\prod_{i \in W} X_i^{-1}\right)\left(\prod_{j \in V} Y_j^{-1}\right)K\left[X_{i'}, Y_{j'},X_i^{-1}, Y_j^{-1} \mid i \in W, j \in V, i' \notin W, j' \notin V\right]\]
define by 
\[X_{i'}^{a_{i'}}Y_{j'}^{b_{j'}}\partial_i^{c_i}\delta_j^{d_j} \mapsto (-1)^{c_i+d_j}c_i!\cdot d_j!\cdot X_{i'}^{a_{i'}}Y_{j'}^{b_{j'}}X_i^{-c_i-1}Y_j^{-d_j-1}\]
is an isomorphism of 
left $\D$-modules, where the action of $\D$ on $F$ is defined using Lemma \ref{X_inv_D_act}.
\end{lemma}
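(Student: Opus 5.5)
The plan is to reduce everything to one variable at a time, using the fact that the Weyl algebra is a tensor product of one-variable Weyl algebras. Write $e:=\prod_{i\in W}X_i^{-1}\prod_{j\in V}Y_j^{-1}\in F$. Here $F$ is understood with the $\D$-action of Lemma \ref{X_inv_D_act}. For the $W$- and $V$-variables this means the local cohomology structure: $F\cong\bigotimes_{i\in W}H^1_{(X_i)}(K[X_i])\otimes\bigotimes_{j\in V}H^1_{(Y_j)}(K[Y_j])\otimes K[X_{i'},Y_{j'}]$, so any monomial with a nonnegative exponent in some $X_i$ ($i\in W$) or $Y_j$ ($j\in V$) is zero. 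I will define $\varphi$ intrinsically by $\varphi(P+\D\mathfrak{b})=P\cdot e$ and then check three things: it is well defined, it agrees with the stated formula on the proposed basis, and it is bijective. $\D$-linearity is then automatic.

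\emph{Well-definedness.} It suffices to check that each generator of $\mathfrak{b}$ kills $e$. For $i\in W$ we have $X_i\cdot X_i^{-1}=1=0$ in $H^1_{(X_i)}(K[X_i])$, and the same holds for $Y_j$ with $j\in V$. For $i'\notin W$ we have $\partial_{i'}e=0$ by Lemma \ref{X_inv_D_act}, since $e$ does not involve $X_{i'}$, and likewise $\delta_{j'}e=0$ for $j'\notin V$. Hence $\D\mathfrak{b}\subseteq\operatorname{ann}_{\D}(e)$, and $\varphi$ is a well-defined homomorphism of left $\D$-modules.

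\emph{Basis of $\D/\D\mathfrak{b}$.} Write $\D=\bigotimes_{k}A_1(K)$ as a tensor product over $K$ of one-variable Weyl algebras, one for each variable. The left ideal $\D\mathfrak{b}$ is then the sum of the extensions of the one-generator left ideals $A_1X_i$ ($i\in W$), $A_1Y_j$ ($j\in V$), $A_1\partial_{i'}$ ($i'\notin W$) and $A_1\delta_{j'}$ ($j'\notin V$). Consequently
\[\D/\D\mathfrak{b}\cong\bigotimes_{i\in W}A_1/A_1X_i\otimes\bigotimes_{j\in V}A_1/A_1Y_j\otimes\bigotimes_{i'\notin W}A_1/A_1\partial_{i'}\otimes\bigotimes_{j'\notin V}A_1/A_1\delta_{j'}.\]
In one variable I use the PBW basis in the ordering suited to the generator:
\begin{itemize}
\item for $A_1/A_1\partial$, use the ordering $X^a\partial^c$; the left ideal $A_1\partial$ is spanned by the monomials with $c\ge1$, so the quotient has basis $\{X^a\}$;
\item for $A_1/A_1X$, use the ordering $\partial^cX^a$; the left ideal $A_1X$ is spanned by the monomials with $a\ge1$, so the quotient has basis $\{\partial^c\}$.
\end{itemize}
Tensoring these bases gives exactly the claimed $K$-basis $\{X_{i'}^{a_{i'}}Y_{j'}^{b_{j'}}\partial_i^{c_i}\delta_j^{d_j}\}$.

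\emph{Formula and bijectivity.} All the variables commute with one another's derivations, so Lemma \ref{X_inv_D_act} applied factor by factor gives
\[\varphi\bigl(X_{i'}^{a}Y_{j'}^{b}\partial_i^{c}\delta_j^{d}\bigr)=X_{i'}^{a}Y_{j'}^{b}\,\partial_i^{c}(X_i^{-1})\,\delta_j^{d}(Y_j^{-1})\cdots=(-1)^{c+d}c!\,d!\,X_{i'}^{a}Y_{j'}^{b}X_i^{-c-1}Y_j^{-d-1}\cdots,\]
which is the stated formula. The monomials $X_{i'}^aY_{j'}^bX_i^{-c-1}Y_j^{-d-1}$ (with the remaining factors) form a $K$-basis of $F$. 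The coefficients $(-1)^{c+d}c!\,d!$ are nonzero because $\chars K=0$. Thus $\varphi$ sends a basis bijectively onto nonzero scalar multiples of a basis, and is therefore an isomorphism.

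The main obstacle is the basis step. One must justify that the tensor decomposition is compatible with the left ideals, and that in each one-variable factor the chosen normal ordering makes the left ideal exactly the span of the monomials with positive exponent in its generator. I would prove the latter directly from $\partial X=X\partial+1$. The one other delicate point is fixing the correct $\D$-module structure on $F$, namely the quotient (local cohomology) structure, so that $X_iX_i^{-1}=0$ in $F$.
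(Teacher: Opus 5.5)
Your proof is correct and complete. The paper gives no proof of this lemma; it is introduced as a well-known result and simply stated. So there is no argument of the authors to compare with, and your proposal supplies the natural one.

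Two points in your write-up do real work that the statement leaves implicit.

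First, you fix the $\D$-module structure on $F$ as the quotient (local cohomology) structure, in which a monomial with nonnegative exponent in some $X_i$, $i\in W$, or $Y_j$, $j\in V$, is zero. This reading is forced. Inside the Laurent polynomial ring, $F$ is not stable under multiplication by $X_i$ for $i\in W$. Moreover, $\varphi$ could not be well defined there, because $X_i\in\mathfrak{b}$ while $X_i\cdot e\neq 0$. Lemma \ref{X_inv_D_act} only prescribes how the derivations act.

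Second, defining $\varphi$ intrinsically by $P+\D\mathfrak{b}\mapsto P\cdot e$ makes $\D$-linearity automatic. The statement defines $\varphi$ only on a $K$-basis, from which linearity would otherwise have to be checked by hand. The tensor factorization $\D/\D\mathfrak{b}\cong\bigotimes_k A_1/A_1g_k$ then reduces the basis claim to the two one-variable PBW computations, which you carry out correctly. Characteristic zero is used exactly where it is needed, to guarantee that the coefficients $c!\,d!$ are nonzero.
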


\begin{remark}\label{rmk_quo_poly_rel}
Note that in Lemma \ref{quo_poly_rel}, $\varphi$ is $\D$-linear, $\overline{1}$ in $\frac{\D}{\D \mathfrak{b}}$ maps to $\left(\prod_{i \in W} X_i^{-1}\right) \left(\prod_{j \in V} Y_j^{-1}\right)$ and $\bideg \left(\prod_{i \in W} X_i^{-1}\right)$ $ \cdot \left(\prod_{j \in V} Y_j^{-1}\right)=(|W|, |V|)$. So there exists a bigraded isomorphism
\[\frac{\mathcal{D}}{\mathcal{D}\mathfrak{b}}\left((-|W|, -|V|)\right) \overset{\varphi}{\cong} \left(\prod_{i \in W} X_i^{-1}\right)\left(\prod_{j \in V} Y_j^{-1}\right)K\left[X_i^{-1}, Y_j^{-1}, X_{i'}, Y_{j'} \mid i \in W, j \in V, i' \notin W, j' \notin V\right]\]
of left $A_{(n,m)}(K)$-modules. 
\end{remark}

\begin{proposition} \label{msudan}
Let $n,m \geq 2$. Let $M = \bigoplus_{(u,v) \in \Z^2}M_{(u,v)}$ be a bigraded generalized Eulerian holonomic $A_{n,m}(K)$-module. Suppose 
$\dim_K M_{(0,0)} $ is finite and that for each of the regions $\mathrm{Trun}(\mathcal{N}), \mathrm{Trun}(\mathcal{W}^*)$ {\rm(or,} $ \mathrm{Trun}(\mathcal{S}^*)${\rm)} and $\mathrm{Trun}(\mathcal{E})$, there exists $(u_1,v_1)$ with $M_{(u_1,v_1)}=0$, but $M \neq 0$. Then $\dim_K M_{(u,v)} $ is finite and non-zero in some of the four regions $\mathcal{NE}$,  $\mathcal{NW^*}$, $\mathcal{S^*W^*}$ and $\mathcal{S^*E}$. Moreover, $\dim_K M_{(u,v)}$ in these regions is given in the following table:

\begin{minipage}{0.55\textheight}	
	\vspace{-0.2cm}
	\begin{table}[H]
		\centering
		\begin{tabular}{|l|l|}
			\hline
			\textbf{Blocks}  &   $\dim_K M_{(u,v)}$   \\
			\hline
			$\mathcal{NE} $ &   $\dim_K M_{(0,0)} \cdot \binom{u + n -1}{n-1}\binom{v + m -1}{m-1}$  \\
			\hline
			$\mathcal{NW^*}$ &  $\dim_K M_{(-n,0)} \cdot \binom{(n-u) + n -1}{n-1}\binom{v + m -1}{m-1}$  \\
			\hline  
			$\mathcal{S^*W^*}$   &   $\dim_K M_{(-n,-m)} \cdot \binom{(n-u) + n -1}{n-1}\binom{(m-v) + m -1}{m-1} $ \\
			\hline    
			$\mathcal{S^*E}$  &    $\dim_K M_{(0,-m)} \cdot \binom{u + n -1}{n-1}\binom{(m-v) + m -1}{m-1}$\\
			\hline
		\end{tabular}
	\end{table}
\end{minipage}
\begin{minipage}{0.125\textheight}
\begin{center}
	\begin{tikzpicture}[scale=0.15]
	\draw[->] (-7.5,0)--(7.5,0) node[right]{$u$};
	\draw[->] (0,-7.5)--(0,7.5) node[above]{$v$};
	\draw[dashdotted, red] (-1.5,7.5)--(-1.5,-7.5);
	\draw[dashdotted, red] (-7.5,-2)--(7.5, -2);
	\draw[fill=cyan,fill opacity=0.35,draw=none] (-1.5,-7.5)--(0,-7.5)--(0,7.5)--(-1.5,7.5);
	\draw[fill=cyan,fill opacity=0.35,draw=none] (7.5,-2)--(7.5,0)--(-7.5,0)--(-7.5,-2)--(7.5,-2);
	\node[blue,draw=none] at (2,-3.5) {\tiny $(0,-m)$};
	\node[red] at (0,-2){\tiny$\bullet$};
	\node[blue,draw=none] at (2,1) {\tiny $(0,0)$};
	\node[red] at (0,0){\tiny$\bullet$};
	\node[blue,draw=none] at (-4,1) {\tiny $(-n,0)$};
	\node[red] at (-1.5,0){\tiny$\bullet$};
	\node[blue,draw=none] at (-4,-3.5) {\tiny $(-n,-m)$};
	\node[red] at (-1.5,-2){\tiny$\bullet$};
	\end{tikzpicture}
\end{center}
\end{minipage}

\vspace{0.15cm}
\noindent
and $M_{(u,v)} = 0$ otherwise.

\end{proposition}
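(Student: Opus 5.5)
First, I will use the rigidity results to cut $M$ down to the four blocks. Theorem \ref{bi-rigid2} says that if $M_{(u_0,v_0)}\neq 0$ for some $(u_0,v_0)$ in a truncated region, then $M$ is nonzero on the whole corresponding half-plane, and that half-plane contains the whole truncated region. This contradicts the hypothesis that each truncated region contains a zero component. Hence $M$ vanishes on $\mathrm{Trun}(\mathcal{N})$, $\mathrm{Trun}(\mathcal{W^*})$ (or $\mathrm{Trun}(\mathcal{S^*})$), and $\mathrm{Trun}(\mathcal{E})$. The last assertion of Theorem \ref{bi-rigid2} likewise kills the central box $-n<u<0,\ -m<v<0$. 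I also need the remaining truncated region to vanish. For this I would apply the one-variable rigidity \cite[Theorem 7.3]{TPSR} to $M_{u,\star}$ with $-n<u<0$: a nonzero component in the strip forces $M_{(u,v)}\neq0$ for all $v$, which contradicts vanishing on $\mathrm{Trun}(\mathcal{N})$. The outcome is that $M_{(u,v)}=0$ outside $\mathcal{NE}\cup\mathcal{NW^*}\cup\mathcal{S^*W^*}\cup\mathcal{S^*E}$. By Theorem \ref{bi-rigid}, on each block $M$ is either identically zero or everywhere nonzero. Since $M\neq0$, at least one block is nonzero.

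Next, finiteness. I will combine $\dim_K M_{(0,0)}<\infty$ with the zero components $M_{(-1,-1)}=0$ and $M_{(-1,0)}=0$, using that $n,m\ge2$ puts these in the vanishing regions. This supplies finite-dimensional pieces at $(a_1,b_1)$ with $a_1,b_1\ge0$ and at $(a_2,b_2)$ with $a_2,b_2\le-1$, so Theorem \ref{bi-finite length} gives $\dim_K M_{(u,v)}<\infty$ for all $(u,v)$.

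Then I identify the structure. The plan is to show that $M$ is a direct sum of shifted copies of the four modules $F$ of Remark \ref{rmk_quo_poly_rel}, corresponding to $W,V\in\{\emptyset,\text{all}\}$. Take $\mathcal{NE}$ as the model. Set $N=M_{(0,0)}$ and consider $N':=\{z\in N:\partial_iz=0,\ \delta_jz=0\ \forall i,j\}$. Since $M_{(-1,0)}=M_{(0,-1)}=0$, every $z\in M_{(0,0)}$ is killed by all $\partial_i$ and $\delta_j$, so $N'=N$. Choose a $K$-basis $z_1,\dots,z_s$ of $M_{(0,0)}$, with $s=\dim_K M_{(0,0)}$. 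Each $z_k$ gives a map $\D/\D(\partial,\delta)\cong R\to M$, and $R$ is simple as a $\D$-module, so these maps are injective. Their sum $R^s\to M$ is injective because it is injective in degree $(0,0)$: the kernel is a submodule of $R^s$, and any nonzero submodule of $R^s$ is a sum of copies of $R$, hence nonzero in degree $(0,0)$. The analogous construction at $(-n,0)$, $(-n,-m)$ and $(0,-m)$ uses elements annihilated by the appropriate mixture of $X_i,\partial_i,Y_j,\delta_j$, obtained from the vanishing of neighbouring degrees, and produces copies of the corresponding $F$. Summing gives an injective map $\Phi\colon\bigoplus F^{s_k}\to M$.

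For surjectivity I would study $\coker\Phi$. It is again bigraded generalized Eulerian and holonomic by Proposition \ref{exact} and closure of holonomicity under quotients. It vanishes at the four corners and on all the vanishing regions. By Theorem \ref{bi-rigid} a block that is zero at its corner is zero on the whole block, so $\coker\Phi=0$. The dimension table then follows from the explicit monomial bases of $F$: the count in the $X$-part is $\binom{u+n-1}{n-1}$ or $\binom{(n-u)+n-1}{n-1}$, and similarly in the $Y$-part, multiplied by the multiplicity at the corner. The main obstacle is showing that corner elements are killed by the required operators, for example $X_i$ and $\delta_j$ at $(-n,0)$. Degree considerations only give $X_iz\in M_{(-n+1,0)}$, and this lies in $\mathrm{Trun}(\mathcal{N})$ only because $n\ge2$ and the truncated region vanishes; this is exactly where the hypotheses $n,m\ge2$ are used.
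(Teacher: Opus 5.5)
The genuine gap is the step where you derive the vanishing of the fourth truncated region from the other three. A point of $\mathrm{Trun}(\mathcal{S^*})$ has $-n<u<0$ but $v\le -m$. So \cite[Theorem 7.3]{TPSR} does not apply to the $A_m(K)$-module $M_{u,\star}$, because it needs the nonzero index to lie in $(-m,0)$. Only \cite[Theorem 7.1(I)]{TPSR} applies there, and it gives $M_{(u,v)}\neq 0$ for $v\le -m$, which never reaches $\mathrm{Trun}(\mathcal{N})$. Applying Theorem 7.3 to $M_{\star,v}$ instead only makes the row through that point nonzero, and that row lies in $\mathcal{S^*}$, so nothing is contradicted.

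In fact the claim is false. For $n,m\ge 2$ take $M=H^{m+1}_{(X_n,Y_1,\dots,Y_m)}(R)\cong X_n^{-1}Y_1^{-1}\cdots Y_m^{-1}K[X_1,\dots,X_{n-1},X_n^{-1},Y_1^{-1},\dots,Y_m^{-1}]$ (cf.\ Example \ref{eg_rigidity}). Its basis monomials $X_1^{a_1}\cdots X_{n-1}^{a_{n-1}}X_n^{-b}Y_1^{-c_1}\cdots Y_m^{-c_m}$, with $a_i\ge 0$ and $b,c_j\ge 1$, all have $v\le -m$. For each $(u,v)$ with $v\le -m$ there are infinitely many of them, since $b$ and $\sum a_i$ can grow together. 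So $M$ is holonomic and bigraded generalized Eulerian, $M_{(0,0)}=0$, and $M$ vanishes on $\mathrm{Trun}(\mathcal{N})$, $\mathrm{Trun}(\mathcal{W^*})$ and $\mathrm{Trun}(\mathcal{E})$. Yet $M_{(u,v)}$ is nonzero and infinite-dimensional for every $v\le -m$, in particular on all of $\mathrm{Trun}(\mathcal{S^*})$. The symmetric module $H^{n+1}_{(X_1,\dots,X_n,Y_m)}(R)$ defeats the other reading of the ``or''.

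So the proposition holds only if all four truncated regions contain a zero component. This is what the paper tacitly uses: its proof simply asserts that the whole shaded cross vanishes. The issue is not cosmetic for your argument. At the corners $(-n,-m)$ and $(0,-m)$ you need $X_iz\in M_{(-n+1,-m)}=0$ and $\partial_iz\in M_{(-1,-m)}=0$, which is exactly the vanishing of $\mathrm{Trun}(\mathcal{S^*})$.

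Under that corrected hypothesis your structure argument is sound. It is in fact more careful than the paper's, which handles one block at a time and tacitly assumes $M$ vanishes outside it. Your single map $\Phi$ over all four blocks, with $\coker\Phi$ killed by Theorem \ref{bi-rigid}, covers mixed supports. Three adjustments are needed.

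First, do not rely on Theorem \ref{bi-finite length} for finiteness. The module above is zero at $(0,0)$, $(-1,0)$, $(-1,-1)$ and $(0,-1)$, so it meets that theorem's hypotheses while having infinite-dimensional components; the theorem cannot be invoked as stated.

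Second, get finiteness from your own construction instead. Your injectivity argument works verbatim for an arbitrary, possibly infinite, basis of a corner piece. So $M$ contains a direct sum of $s_k$ copies of the corresponding simple module $F$, where $s_k$ is the dimension of that corner piece. Holonomic modules have finite length, so every $s_k$ is finite, and the resulting isomorphism with $\bigoplus_k F^{s_k}$ gives finite-dimensional components.

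Third, counting monomials $X_1^{-a_1}\cdots X_n^{-a_n}$ with $a_i\ge 1$ of degree $u\le -n$ gives $\binom{-u-1}{n-1}$. The table's $\binom{(n-u)+n-1}{n-1}$, and its $Y$-analogue, is a misprint for $\binom{(-n-u)+n-1}{n-1}$ and does not literally follow from the bases of $F$.
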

\begin{proof}
	From Theorem \ref{bi-finite length} we have $\dim_K M_{(u,v)} < \infty$ for all $(u,v) \in \Z^2$. Besides, by Theorem \ref{bi-rigid}, we get $M_{(u,v)} = 0$ for all $(u,v)$ in the shaded region in the above figure. As $M\neq 0$ so $M_{(u,v)} \neq 0$ in some of 
$\mathcal{NE}$,  $\mathcal{NW^*}$, $\mathcal{S^*W^*}$ and $\mathcal{S^*E}$.

	For convenience, set $\D = A_{n,m}(K)$ and consider $T = K[X_1,\ldots, X_n, Y_1, \ldots, Y_m]$ with its unique maximal homogeneous  ideal $\n = (X_1,\ldots, X_n, Y_1, \ldots, Y_m)$. Further let $E = \D/\D\n$ denote the injective hull of $K = T/\n$ as a $T$-module.
	
	For the block $\mathcal{NE}$, Let $p = \dim_k M_{(0,0)}$. Suppose that $\{u_1,\ldots, u_p\}$ is a $K$-basis of $M_{(0,0)}$. We consider the map $\psi \colon \D^p \rt M$ which sends $e_i$ to $u_i$ for $i=1, \ldots, p$. Set $\mathfrak{b}:=\D\underline{\partial}+\D\underline{\delta}$, where $\underline{\partial}:=\partial_1,\ldots,\partial_n$ and $\underline{\delta}:=\delta_1, \ldots, \delta_m$. Notice
	$\psi (\mathfrak{b}) = 0$, as $M_{(-1,0)}=0=M_{(0,-1)}$. So $\psi$ factors through a map $\ov{\psi} \colon T^p \rt M$, as $T\cong \D/\mathfrak{b}$. Thus we have an exact sequence 
	\[
	0 \rt K \rt T^p \xrightarrow{\ov{\psi}} M \rt C \rt 0 \]
	of bigraded generalized Eulerian holonomic $\D$-modules, $\text{where} \ K = \ker \ov{\psi} \ \text{and} \ C = \coker \ov{\psi}$. Note that by construction, $K_{(u,v)} = C_{(u,v)} = 0$ for  all $(u,v) \in \Z^2 \backslash [0,\infty) \times [0,\infty)$, as $T$ and $M$ have this property. Again by the construction of the map, $(T_{(0,0)})^p \cong M_{(0,0)}$. So $K_{(0,0)} = 0= C_{(0,0)}$. From Theorem \ref{bi-rigid}(I) it follows that $K_{(u,v)} =0= C_{(u,v)}$ for all $(u,v) \geq (0,0)$, that is, for all $(u,v) \in [0,\infty) \times [0,\infty)$. Thus $K = C = 0$ and hence $M\cong T^p$. The result follows.
	
	\vspace{0.15cm}
	One can obtain the above result for the blocks $\mathcal{NW^*}, \mathcal{S^*W^*}$ and $\mathcal{S^*E}$ by considering the ideal $\mathfrak{b}$ in $\D$ to be $(X_1, \ldots,X_n, \delta_1,\ldots,\delta_m)$, $(X_1, \ldots, X_n, Y_1, \ldots, Y_m)$ and $(\partial_1, \ldots, \partial_n, Y_1, \ldots, Y_m),$ respectively (see Lemma \ref{quo_poly_rel} and Remark \ref{rmk_quo_poly_rel}) and using the same line of proof as above. This establishes the assertions.
\end{proof}

\begin{remark}\label{fin_iso}
The above result shows that if the nonzero components of a bigraded generalized Eulerian holonomic $A_{n,m}(K)$-module $M$ is represented by the blocks $\mathcal{NE}, \mathcal{NW^*}, \mathcal{S^*W^*}, \mathcal{S^*E}$ in Theorem \ref{bi-rigid} respectively, then $M \cong (\D/\mathfrak{b})^p$, where $\mathfrak{b}$ denotes the ideal $(X_1, \ldots,X_n,Y_1, \ldots, Y_m)$, $(X_1, \ldots,X_n, \delta_1,\ldots,\delta_m)$, $(X_1, \ldots, X_n, Y_1, \ldots, Y_m)$, $(\partial_1, \ldots, \partial_n, Y_1, \ldots, Y_m)$ in $\D$, and `$p$' is the $K$-vector space dimension of the pieces $M_{(0,0)}, M_{(-n,0)},M_{(-n,-m)}, M_{(0,-m)}$, respectively. 
\end{remark}

\section{Application to local cohmology supported on bihomogeneous ideals}
In this section, we restrict our attention to local cohomology modules. Our goal is to generalize the results proven by second author in \cite{TP2}. Throughout this section, we assume that $C$ is a regular ring. Our proofs use similar steps as in \cite{TP2} under the following framework.

\s\label{sa_application}{\bf Setup}: Let $C$ be a regular ring containing a field $K$ of characteristic zero and let $R=C[X_1, \ldots, X_n, Y_1, \ldots, Y_m]$ be a polynomial ring over $C$ with $\bideg c=(0,0)$ for all $c \in C$, $\bideg X_i=(1,0)$ and $\deg Y_j=(0,1)$ for $i=1, \ldots, n$ and $j=1, \ldots, m$. Let $\mathcal{T}$ be a bigraded Lyubeznik functor on ${}^*\Mod(R)$. Set $M:=\TT(R)=\bigoplus_{(u,v)\in \Z^2}M_{(u,v)}$.
	
\s{\bf Growth of Bass numbers}.\\ 
We will need the following Lemma from \cite[1.4]{Lyu1}.
\begin{lemma}\label{lyu-lemma}
Let $B$ be a Noetherian ring and $N$ be a (need not be finitely generated) $B$-module. Let $P$ be a prime ideal in $B$. If $(H^j_P(N))_P$ is injective for all $j \geq 0$ then
\[\mu_j(P,N) = \mu_0(P,H^j_P(N)) \quad \mbox{ for all } j \geq 0.\]
\end{lemma}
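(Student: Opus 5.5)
The statement to prove is Lyubeznik's Lemma \ref{lyu-lemma}: if $(H^j_P(N))_P$ is injective for all $j$, then $\mu_j(P,N)=\mu_0(P,H^j_P(N))$ for all $j\ge 0$. The plan is to localize at $P$ and then compute Bass numbers through the functor $\Gamma_{PB_P}$.

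\emph{Reduction to the local case.} Bass numbers are defined by $\mu_j(P,N)=\dim_{k(P)}\operatorname{Ext}^j_{B_P}(k(P),N_P)$, so they only see $N_P$ over $B_P$. Local cohomology commutes with flat base change, so $H^j_P(N)_P\cong H^j_{PB_P}(N_P)$. We may therefore assume $B$ is local with maximal ideal $P$, and write $k=B/P$.

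\emph{Key step.} Take a minimal injective resolution $0\to N\to E^0\to E^1\to\cdots$ over the local ring $B$. Each $E^j$ is a direct sum of modules $E(B/Q)$, and $\Gamma_P$ keeps exactly the summands with $Q=P$. Hence $\Gamma_P(E^j)\cong E(k)^{\mu_j(P,N)}$, and the complex $\Gamma_P(E^\bullet)$ computes $H^\bullet_P(N)$. Moreover $\operatorname{Hom}_B(k,E^\bullet)=\operatorname{Hom}_B(k,\Gamma_P(E^\bullet))$. By minimality this complex has zero differentials, so its $j$-th term is $k^{\mu_j(P,N)}$.

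\emph{Splitting the complex.} Write $I^\bullet=\Gamma_P(E^\bullet)$, a complex of injectives, and put $Z^j=\ker(I^j\to I^{j+1})$ and $B^j=\operatorname{im}(I^{j-1}\to I^j)$. We show by induction on $j$ that $Z^j$ and $B^{j+1}$ are injective, so that the sequences $0\to Z^j\to I^j\to B^{j+1}\to0$ and $0\to B^j\to Z^j\to H^j\to 0$ split.
\begin{itemize}
\item Base case: $Z^0=H^0_P(N)$ is injective by hypothesis. So $I^0\cong Z^0\oplus B^1$, and $B^1$ is injective as a direct summand of an injective.
\item Inductive step: if $B^j$ is injective, the sequence $0\to B^j\to Z^j\to H^j_P(N)\to0$ has injective outer terms, since $H^j_P(N)$ is injective by hypothesis. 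So it splits and $Z^j$ is injective. Then $I^j=Z^j\oplus B^{j+1}$, so $B^{j+1}$ is injective.
\end{itemize}
Consequently $I^\bullet$ decomposes as $\bigoplus_j H^j_P(N)[-j]$ plus a split-exact complex of injectives. Applying $\operatorname{Hom}(k,-)$ to the split-exact part gives an exact complex. Applying it to the full complex gives zero differentials, as shown above.

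\emph{Extracting the counts.} The decomposition gives $I^j\cong B^j\oplus H^j_P(N)\oplus B^{j+1}$. Applying $\operatorname{Hom}(k,-)$ and comparing dimensions yields
\[\mu_j(P,N)=\mu_0(P,H^j_P(N))+b_j+b_{j+1},\qquad b_j:=\dim_k\operatorname{Hom}(k,B^j).\]
The main obstacle is showing $b_j=0$. For this, use that the induced differential $\operatorname{Hom}(k,I^{j-1})\to\operatorname{Hom}(k,I^j)$ is zero. Under the splitting this map restricts to the identity of $\operatorname{Hom}(k,B^j)$, so $\operatorname{Hom}(k,B^j)=0$.

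Since $B^j$ is $P$-torsion, $\operatorname{Hom}(k,B^j)=0$ forces $B^j=0$: a nonzero $P$-torsion module has a nonzero element killed by $P$, hence a nonzero socle. Therefore $\mu_j(P,N)=\mu_0(P,H^j_P(N))$, as required.
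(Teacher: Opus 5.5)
Your proof is correct, apart from standard facts you use without comment: $\Gamma_P$ commutes with arbitrary direct sums, and $\Gamma_P$ kills $E(B/Q)$ for $Q\subsetneq P$. The paper gives no proof of this lemma; it is quoted from \cite[1.4]{Lyu1}. There the argument is, in essence, a spectral-sequence collapse. After localizing at $P$, write $\operatorname{Hom}_B(k,-)=\operatorname{Hom}_B(k,-)\circ\Gamma_P$. Since $\Gamma_P$ preserves injectives, there is a Grothendieck spectral sequence $E_2^{p,q}=\operatorname{Ext}^p_B(k,H^q_P(N))\Rightarrow\operatorname{Ext}^{p+q}_B(k,N)$. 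Injectivity of every $H^q_P(N)$ kills all columns with $p>0$, which gives $\operatorname{Ext}^q_B(k,N)\cong\operatorname{Hom}_B(k,H^q_P(N))$ at once. Your inductive splitting of $\Gamma_P(E^\bullet)$ is an elementary, hands-on version of that collapse.

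Each route buys something different. First, the equality itself does not need minimality. Once you have $\Gamma_P(E^\bullet)\cong\bigoplus_j H^j_P(N)[-j]\oplus(\text{split exact})$, taking cohomology after applying $\operatorname{Hom}_B(k,-)$ already gives the result for any injective resolution, exactly as in the spectral-sequence proof. Second, your minimality step proves more than the statement. It shows $B^j=0$ for all $j$, so $\Gamma_P(E^\bullet)$ has zero differentials and $H^j_P(N)_P\cong E(k(P))^{(\mu_j(P,N))}$. This is precisely the combination the paper relies on in the proof of Theorem \ref{bi-bass}, where $\mu_j(P,M_{(u,v)})$ is read off as the multiplicity $\alpha_{(u,v)}$ in $L_{(u,v)}\cong E_B(K)^{\alpha_{(u,v)}}$.

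One small imprecision: the induced differential does not literally restrict to ``the identity of $\operatorname{Hom}_B(k,B^j)$''. Let $C^{j-1}$ be your chosen complement of $Z^{j-1}$ in $I^{j-1}$. The differential maps the summand $\operatorname{Hom}_B(k,C^{j-1})$ isomorphically onto $\operatorname{Hom}_B(k,B^j)$. Your conclusion $\operatorname{Hom}_B(k,B^j)=0$ is unaffected.
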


\begin{theorem}[With hypotheses as in \ref{sa_application}]\label{comp-inj}
Let $P$ be a prime ideal in $C$. Set $N:=M_{(u,v)}$ for some $(u,v) \in \Z^2$. Then $H_P^i(N)_P$ is injective for all $i \geq 0$.	
\end{theorem}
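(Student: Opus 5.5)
The plan is to reduce to the graded statement already proved in \cite{TP2}, by collapsing one of the two gradings. Consider $R=S^X=R^X[Y_1,\ldots,Y_m]$ with $R^X=C[X_1,\ldots,X_n]$, graded by $\deg z=0$ for $z\in R^X$ and $\deg Y_j=1$. As in the proof of Theorem \ref{Mul-LC}, $\TT$ is then a graded Lyubeznik functor on ${}^*\Mod(S^X)$. Its value $\TT(R)=\bigoplus_{v}M_{*,v}$ has graded pieces $M_{*,v}=\bigoplus_u M_{(u,v)}$, and these are modules over the base ring $R^X$. Since $C$ is regular containing $K$ of characteristic zero, $R^X$ is also regular containing $K$. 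So the base-ring hypothesis of \cite{TP2} holds with $C$ replaced by $R^X$.

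The first step is to apply the graded result of \cite{TP2} on Bass numbers and injectivity of components. That result says: for a regular base ring $B$ and a graded Lyubeznik functor on $B[Y_1,\ldots,Y_m]$, each component $N'=\TT(B[\underline{Y}])_v$ has $H^i_Q(N')_Q$ injective for every prime $Q$ of $B$ and every $i$. Applied with $B=R^X$, this shows $H^i_Q(M_{*,v})_Q$ is an injective $R^X_Q$-module for every prime $Q\subset R^X$.

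The second step is to pass from $M_{*,v}$ over $R^X$ to $M_{(u,v)}$ over $C$. By Remark \ref{comp-grd-genEur}, $M_{*,v}$ is a graded generalized Eulerian $A_n(C)$-module with respect to the $X$-grading. More precisely, $M_{*,v}=\TT'(R^X)$ for a graded Lyubeznik functor $\TT'$ on ${}^*\Mod(R^X)$, obtained by composing $\TT$ with taking the $v$-th $Y$-component; this composite is exact and commutes with local cohomology supported on $Y$-homogeneous ideals. Here $R^X$ is standard graded in the $X_i$ over $C$. Applying the same theorem of \cite{TP2} again, now with base $C$, gives $H^i_P((M_{*,v})_u)_P=H^i_P(M_{(u,v)})_P$ injective for every prime $P$ of $C$.

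An alternative is to argue directly in the style of \cite{TP2}, treating the two gradings simultaneously. One would pass to the completion $\widehat{C_P}\cong L[[Z_1,\ldots,Z_d]]$ with coefficient field $L\supseteq K$, using that $\TT$ commutes with the flat base change $C\to\widehat{C_P}$. Then $H^i_P(M)$ becomes a bigraded holonomic module over $A_{n,m}(D_L(\widehat{C_P}))$. Its bigraded components are $D_L(\widehat{C_P})$-modules supported at the maximal ideal, hence direct sums of copies of $E(\widehat{C_P}/P\widehat{C_P})$ by Lyubeznik's structure theorem for such $D$-modules.

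The main obstacle is justifying that the componentwise functor $\TT'$ (respectively the bigraded component functor) is indeed a Lyubeznik functor in the sense required by \cite{TP2}. In particular, one must check that $M_{(u,v)}$ is a $D_L(\widehat{C_P})$-submodule after completion, i.e.\ that the $D$-module structure of $H^i_P(M)$ preserves bidegrees. For this I would use that derivations of $\widehat{C_P}$ have bidegree $(0,0)$, together with the bigraded analogue of the argument in the proof of Theorem \ref{Mul-LC}.
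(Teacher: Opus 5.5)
Your main route has a genuine gap at Step 2. You need $\TT'\colon N\mapsto \TT(N\otimes_{R^X}R)_{*,v}$ to be a graded Lyubeznik functor on ${}^*\Mod(R^X)$, and nothing shows that it is. $\TT$ is assembled from functors $H^i_I$ with $I$ a bihomogeneous ideal of $R$, typically not extended from $R^X$ (e.g.\ $I=(X_1Y_1+X_2Y_2)$). Extracting the $v$-th $Y$-component does not rewrite $\TT$ as a composite of local cohomology at homogeneous ideals of $R^X$, localizations, and the permitted kernels and cokernels. So the theorem of \cite{TP2} cannot simply be cited a second time.

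Knowing that $M_{*,v}$ is generalized Eulerian over $A_n(C)$ (Remark \ref{comp-grd-genEur}) does not help either. The injectivity statement needs an action of differential operators of the completed base ring, and $A_n(C)$ contains none.

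Step 1, although a valid use of \cite{TP2}, is a detour. It concerns primes $Q$ of $R^X$ and the inhomogeneous localization at $Q$. Already for $Q=PR^X$ this inverts $X_1$ and loses the individual pieces $H^i_P(M_{(u,v)})_P$, and you never use it.

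Your ``alternative'' is exactly the paper's proof, and it is correct:
\begin{enumerate}[(1)]
\item $H^j_{PR}\circ\TT$ is a bigraded Lyubeznik functor whose $(u,v)$-piece is $H^j_P(N)$.
\item After base change to $B=\widehat{C_P}\cong k(P)[[Z_1,\dots,Z_g]]$, the module $L=H^j_{PS}(\TT(S))$ is a bigraded $A_{n,m}(\Lambda)$-module with $\Lambda=D_{k(P)}(B)$ in bidegree $(0,0)$ (Theorem \ref{Mul-LC} with \cite[Theorem 4.2]{TP2}).
\item Hence $L_{(u,v)}$ is a $\Lambda$-module supported at the maximal ideal, so $L_{(u,v)}\cong E_B(k(P))^{(\alpha)}$ by \cite[Theorem 2.4]{Lyu1}.
\end{enumerate}
The one step you leave implicit is the descent. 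Since $H^j_P(N)_P$ is $PC_P$-torsion, $L_{(u,v)}=H^j_P(N)\otimes_C B\cong H^j_P(N)_P$. Moreover $E_B(k(P))\cong E_{C_P}(k(P))\cong E_C(C/P)$, so $H^j_P(N)_P$ is injective.

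If you want an honest reduction to \cite{TP2}, collapse to the \emph{total} grading instead of one of the two gradings. $\TT$ is a graded Lyubeznik functor for $\deg X_i=\deg Y_j=1$, and the degree-$r$ piece of $\TT(R)$ is $\bigoplus_{u+v=r}M_{(u,v)}$. Since $H^i_P(-)_P$ commutes with direct sums, $H^i_P(M_{(u,v)})_P$ is a direct summand of the injective module $H^i_P(\mathrm{Tot}(M)_r)_P$, granting the graded statement of \cite{TP2} that you invoke. A direct summand of an injective module is injective, so you are done. This is shorter than the paper's argument. The paper's direct route, however, also yields the description $L_{(u,v)}\cong E_B(k(P))^{\alpha_{(u,v)}}$, which is reused in Theorem \ref{bi-bass}.
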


\begin{proof}
Note that $H^i_{PR} \circ \mathcal{T}$ is a bigraded Lyubeznik functor and \[H^j_{PR}(\mathcal{T}(R))_{(u,v)}=H^j_{P}(\mathcal{T}(R)_{(u,v)})=H^j_{P}(N)\] for each $j \geq 0$ and all $(u,v) \in \Z^2$. Clearly, either $H^j_{P}(N)_P = 0$ or $P$ is a minimal prime of $H^j_{P}(N)$. Let $H^j_{P}(N)_P \neq 0$. Set $B=\widehat{C_P}$ and put $S=B[X_1, \ldots, X_n, Y_1, \ldots, Y_m]$. As all actions are on degree zero elements so by a similar argument as in \cite[2.6]{TP2} for the graded case we get that $G(-)=B \otimes_C H^j_{PR} \circ \mathcal{T}(-)=S \otimes_R H^j_{PR} \circ \mathcal{T}(-)=H^j_{PS} \circ \mathcal{T}(-)$ is a bigraded Lyubeznik functor on $B \otimes_C R= \widehat{C_P} \otimes_{C_P} C_P \otimes_C C[X_1, \ldots, X_n, Y_1, \ldots, Y_m]=S$. Since $C$ is a regular ring, by {\it Cohen-structure theorem} we have $B=K[[Z_1, \ldots, Z_g]]$, where $K=k(P)$, the residue field of $C_P$ and $g=\hgt P$. Let $\Lambda$ be the $K$-linear differential operators on $B$ and $A_{n,m}(\Lambda)$ be the standard bigraded $(n,m)^{th}$-Weyl algebra over $\Lambda$. Then by Theorem \ref{Mul-LC} and \cite[Theorem 4.2]{TP2} it follows that $L:=G(S)=S \otimes_R H^j_{PR}(\mathcal{T}(R)) =H^j_{PS} \circ \mathcal{T}(S)$ is a bigraded generalized Eulerian $A_{n,m}(\Lambda)$-module. Hence from \cite[2.12]{TP2} we get $L_{(u,v)}=H^j_{P}(N)_P \neq 0$. As $P$ is a minimal prime of $H^j_{P}(N)$ so $L_{(u,v)}$ is supported only at $\n=(Z_1, \ldots, Z_g)$, the unique maximal ideal of $B$. Since $\Lambda$ is contained in the degree zero component of $A_{n,m}(\Lambda)$, we get $L_{(u,v)}$ is also a $\Lambda$-module. Thus by \cite[Theorem 2.4]{Lyu1}, $L_{(u,v)} \cong E_B(K)^{\alpha}$, where $E_B(K)$ is the injective hull of $K$ as a $B$-module and $\alpha$ is some ordinal (possibly infinite). Since $E_B(K) \cong E_{C_P}(k(P)) \cong E_C(C/P)$ as $A$-modules, it follows that $H^j_{P}(N)_P$ is an injective $C$-module.
\end{proof}

\begin{remark}\label{rmk_Bass}
From Theorem \ref{comp-inj} and Lemma \ref{lyu-lemma} it follows that $\mu_j(P,M_{(u,v)}) = \mu_0(P,H^j_P\big(M_{(u,v)}\big)$ for all $j \geq 0$ and all $(u,v)\in \Z^2$.
\end{remark}

We need the following results.
\begin{proposition}\label{Koszul-local}
Let $E$ be a bigraded generalized Eulerian holonomic  $A_{n,m}(\Lambda)$-module. Then $H_j(Y_g, E)$ are bigraded generalized Eulerian holonomic $A_{n,m}(\Lambda')$-modules for $j = 0, 1$, where $\Lambda'=C' \langle \delta_1, \ldots, \delta_{g-1}\rangle$ with $C'=K[[Z_1, \ldots, Z_{g-1}]]$. 
\end{proposition}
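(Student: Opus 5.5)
Here $Y_g$ should be read as $Z_g$, the last variable of the coefficient ring $B=K[[Z_1,\ldots,Z_g]]$, and $\Lambda=B\langle\delta_1,\ldots,\delta_g\rangle$ with $\delta_i=\partial/\partial Z_i$. The plan is to treat the two requirements separately: bigraded generalized Eulerianity is essentially formal, and holonomicity reduces to the graded statement in \cite{TP2}. The basic observation is that $Z_g$ has bidegree $(0,0)$ and commutes with every $X_i,Y_j,\partial_i,\delta_j$ of $A_{n,m}(\Lambda)$, as well as with $\delta_1,\ldots,\delta_{g-1}$ and $Z_1,\ldots,Z_{g-1}$. Multiplication by $Z_g$ is therefore a bihomogeneous $A_{n,m}(\Lambda')$-linear endomorphism of $E$ of degree $(0,0)$. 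Consequently $H_1(Z_g,E)=\ker(Z_g)$ and $H_0(Z_g,E)=E/Z_gE$ are bigraded $A_{n,m}(\Lambda')$-modules, with $(H_j)_{(u,v)}$ equal to the kernel or cokernel of $Z_g$ on $E_{(u,v)}$.

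\textbf{Step 1 (generalized Eulerian).} The Euler operators $\mathcal{E}^X_n,\mathcal{E}^Y_m$ lie in $A_{n,m}(K)\subseteq A_{n,m}(\Lambda')$ and commute with $Z_g$. For $e\in \ker(Z_g)_{(u,v)}$, the element $e$ is bihomogeneous of degree $(u,v)$ in $E$, so $(\mathcal{E}^X_n-u)^a e=0$ and $(\mathcal{E}^Y_m-v)^a e=0$ hold in $E$, hence in the submodule $\ker(Z_g)$. For a class $\overline{e}\in (E/Z_gE)_{(u,v)}$, choose a bihomogeneous lift $e$; the same identities pass to the quotient. This is exactly the argument of Proposition \ref{bigrd-genEur-XY}, and is in fact simpler, since no degree shift or Euler-operator correction occurs: $Z_g$ has degree $(0,0)$ and does not interact with $\mathcal{E}^X_n,\mathcal{E}^Y_m$.

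\textbf{Step 2 (holonomic).} By Definition \ref{bi-holo}, $E$ is holonomic if and only if $\mathrm{Tot}(E)$ is a holonomic $A_{n+m}(\Lambda)$-module. Forming $\mathrm{Tot}$ commutes with taking kernel and cokernel of the degree-$(0,0)$ map $Z_g$, so $\mathrm{Tot}(H_j(Z_g,E))=H_j(Z_g,\mathrm{Tot}(E))$. We then invoke the graded result in \cite{TP2} that the Koszul homology of a holonomic $A_{N}(\Lambda)$-module with respect to the last power-series variable is a holonomic $A_N(\Lambda')$-module; the relevant statement is the one used in its Section 4 and Theorem 4.2, which goes back to \cite[3.4.2, 3.4.4]{BJ}. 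Applied with $N=n+m$, this finishes the proof.

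\textbf{Main obstacle.} The delicate point is Step 2, namely confirming that the dimension count behaves correctly. The weak global dimension drops from $g+N$ to $g-1+N$, and one needs $\dim \operatorname{gr}(H_j)\le g-1+N$ together with finite generation over $A_N(\Lambda')$. If the exact statement in \cite{TP2} is not available in the required form, the first thing to try is the following. Filter $E$ by a good filtration; then $H_1(Z_g,E)$ and $H_0(Z_g,E)$ have good filtrations whose associated graded modules are subquotients of $\ker$ and $\coker$ of $Z_g$ on $\operatorname{gr}E$. Bernstein's inequality shows these have dimension exactly $g-1+N$, or that the modules vanish. Finite generation then follows because holonomic modules have finite length, so the images of the finitely many generators suffice.
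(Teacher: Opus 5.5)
Your proof is correct and is essentially the paper's. The paper obtains generalized Eulerianity by applying Proposition~\ref{exact} to the kernel and cokernel of the $A_{n,m}(\Lambda')$-linear map $E\xrightarrow{Y_g}E$, which is your Step~1 (with $Y_g$ rightly read as the last power-series variable), and obtains holonomicity by citing \cite[Theorem 4.25]{TP2}; that theorem is the precise graded result you invoke in Step~2 after passing to $\mathrm{Tot}$ via Definition~\ref{bi-holo}.

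Rely on that citation rather than on your fallback sketch, which would not work as written. Bernstein's inequality only bounds dimension from below. The kernel and cokernel of $Z_g$ on $\operatorname{gr}E$ can be too large to be finitely generated over $\operatorname{gr}A_N(\Lambda')$: already for $E=E_B(K)$ one has $\operatorname{gr}E\cong K[\xi_1,\ldots,\xi_g]$ with $Z_g$ acting as zero. Finally, finite generation over the smaller ring $A_N(\Lambda')$ cannot be deduced from generators over $A_N(\Lambda)$, nor from finite length without already knowing holonomicity.
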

\begin{proof}
Since $H_1(Y_g, E)=\{e \in E \mid Y_ge=0\},~H_0(Y_g, E)=E/Y_gE$ and the map $E \xrightarrow{Y_g} E$ is $A_{n,m}(\Lambda')$-linear, from Proposition \ref{exact} it follows that $H_j(Y_g, E)$ are bigraded generalized Eulerian $A_{n,m}(\Lambda')$-modules for $j = 0, 1$. Moreover, by \cite[Theorem 4.25]{TP2}, $H_j(Y_g, E)$ are holonomic $A_{n,m}(\Lambda')$-modules for $j = 0, 1$.
\end{proof}
Using induction to the above theorem we get the following statement. 
\begin{corollary}\label{full-koszul-hom}
Let $E$ be a bigraded generalized Eulerian holonomic  $A_{n,m}(\Lambda)$-module. Then for every $\nu \geq 0$, the Koszul homology modules $H_\nu(Y_1,\ldots, Y_g; E)$ are bigraded generalized Eulerian holonomic $A_{n,m}(K)$-modules.
\end{corollary}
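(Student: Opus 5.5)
The plan is to induct on $g$, the number of power series variables in the coefficient ring, using Proposition \ref{Koszul-local} as the inductive step. Here $C=K[[Z_1,\ldots,Z_g]]$, $\Lambda=C\langle \delta_1,\ldots,\delta_g\rangle$ is the ring of $K$-linear differential operators on $C$, and the elements $Y_1,\ldots,Y_g$ in the statement are the power series variables $Z_1,\ldots,Z_g$. These have bidegree $(0,0)$ and lie in the degree zero part $\Lambda$, so they are distinct from the polynomial variables of $R$. For $g=0$ there is nothing to prove: $\Lambda=K$, the Koszul complex on the empty sequence has $H_0=E$ and $H_\nu=0$ for $\nu>0$, and $E$ is already a bigraded generalized Eulerian holonomic $A_{n,m}(K)$-module.

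For the inductive step, write $\Lambda_k=K[[Z_1,\ldots,Z_k]]\langle\delta_1,\ldots,\delta_k\rangle$. The key tool is the standard long exact sequence relating Koszul homology on $g$ elements to Koszul homology on the first $g-1$ elements followed by the last one:
\[0 \to H_0\big(Z_g; H_\nu(Z_1,\ldots,Z_{g-1};E)\big)\to H_\nu(Z_1,\ldots,Z_g;E)\to H_1\big(Z_g; H_{\nu-1}(Z_1,\ldots,Z_{g-1};E)\big)\to 0.\]
It is more convenient to reorder so that $Z_g$ is killed first, which is legitimate because Koszul homology does not depend on the order of the sequence. By Proposition \ref{Koszul-local}, the modules $E_j:=H_j(Z_g;E)$, $j=0,1$, are bigraded generalized Eulerian holonomic $A_{n,m}(\Lambda_{g-1})$-modules. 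Since the $Z_i$ commute, we have a short exact sequence
\[0\to H_0\big(Z_1,\ldots,Z_{g-1};E_\nu\big)\to H_\nu(Z_1,\ldots,Z_g;E)\to H_1\big(Z_1,\ldots,Z_{g-1};E_{\nu-1}\big)\to 0.\]
This is the version with the roles swapped: it comes from viewing the Koszul complex on $Z_1,\ldots,Z_g$ as the mapping cone of $Z_g$ on the Koszul complex on $Z_1,\ldots,Z_{g-1}$, or equivalently by iterating the one-variable sequence.

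Applying the induction hypothesis to $E_0$ and $E_1$ over $\Lambda_{g-1}$ shows that the two outer terms are bigraded generalized Eulerian holonomic $A_{n,m}(K)$-modules. Proposition \ref{exact} then gives the generalized Eulerian property of the middle term. All maps in the sequence are bihomogeneous and $A_{n,m}(K)$-linear, because the $Z_i$ have degree $(0,0)$ and commute with $X_i,Y_j,\partial_i,\delta_j$.

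The main obstacle is holonomicity of the middle term, since Proposition \ref{exact} addresses only the Eulerian property. The first approach would be to use that holonomic $A_{n+m}(K)$-modules are closed under extensions, passing to $\mathrm{Tot}$ as in Definition \ref{bi-holo} (see \cite{BJ}). If that is not directly available in the $\Lambda$ setting, the fallback is to avoid the extension altogether. One would identify $H_\nu(Z_1,\ldots,Z_g;E)$ with the homology of the Koszul complex of the single element $Z_1$ acting on the previous stage, and apply \cite[Theorem 4.25]{TP2} one variable at a time. Each stage is then a kernel or cokernel of a linear map between holonomic modules over the smaller ring, and the class of holonomic modules is closed under kernels and cokernels by that result.
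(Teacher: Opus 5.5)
Your plan (induct on $g$, removing one power-series variable at a time via Proposition \ref{Koszul-local}) is what the paper intends; its proof is the single sentence ``using induction''. However, the short exact sequence your inductive step rests on is false once $g\ge 3$. The mapping-cone description of $K(Z_1,\ldots,Z_g;E)$ gives the sequence with the \emph{single} element on the outside:
\[
0\to H_0\big(Z_g;H_\nu(Z';E)\big)\to H_\nu(Z;E)\to H_1\big(Z_g;H_{\nu-1}(Z';E)\big)\to 0,\qquad Z'=Z_1,\ldots,Z_{g-1}.
\]
It has only two outer terms because Koszul homology on one element lives in degrees $0,1$. You cannot swap the roles, since $H_p(Z';-)$ can be nonzero for every $0\le p\le g-1$. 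Concretely, take $g=3$, $B=K[[Z_1,Z_2,Z_3]]$ and
\[
E=E_B(K)\otimes_K K[X_1,\ldots,X_n,Y_1,\ldots,Y_m]\cong H^3_{(Z)}\big(B[X_1,\ldots,X_n,Y_1,\ldots,Y_m]\big),
\]
a bigraded Eulerian holonomic $A_{n,m}(\Lambda)$-module. Then $H_3(Z_1,Z_2,Z_3;E)=(0:_E(Z))\cong K[X_1,\ldots,X_n,Y_1,\ldots,Y_m]\neq 0$. But $E_2=E_3=0$, so your sequence at $\nu=3$ would force $H_3(Z;E)=0$. This is exactly the case $\nu=g$ the paper needs in Theorem \ref{bi-bass}. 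Your fallback has the same defect: $H_\nu(Z;E)$ is not iterated one-variable Koszul homology, only filtered by such pieces. So extensions cannot be avoided, though they are harmless, since holonomic $A_{n+m}(K)$-modules are closed under extensions.

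To repair the step while still killing $Z_g$ first, filter $K(Z_g;E)=[E\xrightarrow{Z_g}E]$ by its subcomplex $E_1$ placed in degree $1$. The quotient $[E/E_1\hookrightarrow E]$ is quasi-isomorphic to $E_0$. Tensoring with the Koszul complex on $Z'$, a bounded complex of free modules, gives a short exact sequence of complexes and hence a long exact sequence of bihomogeneous $A_{n,m}(K)$-linear maps:
\[
\cdots\to H_{\nu+1}(Z';E_0)\to H_{\nu-1}(Z';E_1)\to H_\nu(Z;E)\to H_\nu(Z';E_0)\to H_{\nu-2}(Z';E_1)\to\cdots
\]
The connecting maps need not vanish. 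By induction, every term except $H_\nu(Z;E)$ is a generalized Eulerian holonomic $A_{n,m}(K)$-module. So $H_\nu(Z;E)$ sits in a short exact sequence $0\to Q\to H_\nu(Z;E)\to S\to 0$, with $Q$ a quotient of $H_{\nu-1}(Z';E_1)$ and $S$ a submodule of $H_\nu(Z';E_0)$. Proposition \ref{exact}, together with closure of holonomic $A_{n+m}(K)$-modules under subquotients and extensions (applied to $\mathrm{Tot}$), then finishes the proof.

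Alternatively, keep the correct mapping-cone sequence, but strengthen the induction. The stronger claim is that Koszul homology with respect to any $k$ of the $Z_i$ is a generalized Eulerian holonomic module over $A_{n,m}$ of the differential-operator ring in the remaining $g-k$ variables. Without this, you cannot apply Proposition \ref{Koszul-local} to $H_\nu(Z';E)$. That module is a priori only a module over $A_{n,m}\big(K[[Z_g]]\langle\partial_{Z_g}\rangle\big)$ and is not yet known to be holonomic there.
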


Next we deduce \cite[Theorem 9.2]{TP2} for the bigraded case.

\begin{theorem}[With hypotheses as in \ref{sa_application} and notations as in Subsection \ref{notations}]	\label{bi-bass} Let $P$ be a prime ideal in $C$. Fix $j \geq 0$. EXACTLY one of the following holds:
	\begin{enumerate}[\rm(i)]
		\item
		$\mu_j(P, M_{(u,v)})$ is infinite for all $(u,v) \in \Z^2$.
		\item $\mu_j(P,M_{(u,v)})$ is finite for all $(u,v)$ in a proper subset of $\big\{\mathcal{NE}, \mathcal{NW^*}\cup \mathrm{Trun}(\mathcal{N}), \mathbb{Z}^2 \backslash (\mathcal{N}\cup \mathcal{E}), \mathcal{S^*E}\cup \mathrm{Trun}(\mathcal{E})\big\}$ and infinite otherwise.
		
		\item
		$\mu_j(P, M_{(u,v)})$ is finite for all $(u,v) \in \Z^2$. In this case EXACTLY one of the following holds:
		\begin{enumerate}[\rm (a)]
	\item $\mu_j(P, M_{(u,v)}) = 0$ for all $(u,v) \in \Z^2$.
	\item $\mu_j(P, M_{(u,v)}) \neq 0$ for all $(u,v)$ in the union of some regions in 
$\big\{\mathcal{NE}, \mathcal{NW^*}, \mathcal{S^*W^*}, \mathcal{S^*E}, \mathcal{C}, \mathcal{N}, \mathcal{W^*}, \mathcal{S^*}, \mathcal{E}\big\}$ and $\mu_j(P, M_{(u,v)}) = 0$ for all $(u,v)$ in the complementary region.	
		\end{enumerate}
	\end{enumerate}
\end{theorem}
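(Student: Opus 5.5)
The plan is to follow the proof of \cite[Theorem 9.2]{TP2}, replacing the graded inputs by their bigraded analogues proved above. First, I reduce Bass numbers to Koszul homology. Fix $P$ and $j$, and apply Remark \ref{rmk_Bass}, so that $\mu_j(P,M_{(u,v)})=\mu_0\big(P,H^j_P(M_{(u,v)})\big)$ for all $(u,v)$. As in the proof of Theorem \ref{comp-inj}, pass to $B=\widehat{C_P}=K[[Z_1,\ldots,Z_g]]$ with $K=k(P)$ and $g=\hgt P$. Then consider
\[
L:=H^j_{PS}\circ\TT(S),\qquad S=B[X_1,\ldots,X_n,Y_1,\ldots,Y_m].
\]
This is a bigraded generalized Eulerian holonomic $A_{n,m}(\Lambda)$-module, with $L_{(u,v)}=H^j_P(M_{(u,v)})_P\otimes\widehat{\ }\;\cong E_B(K)^{\alpha(u,v)}$. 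Hence
\[
\mu_j(P,M_{(u,v)})=\alpha(u,v)=\dim_K \operatorname{Hom}_B(K,L_{(u,v)}).
\]
Since $L_{(u,v)}$ is $\n$-torsion, this socle equals $H_g(Z_1,\ldots,Z_g;L)_{(u,v)}$, the top Koszul homology with respect to the power-series variables. By Corollary \ref{full-koszul-hom} (applied with the $Z$'s in place of the $Y_g$'s there), $V:=H_g(\underline Z;L)$ is a bigraded generalized Eulerian holonomic $A_{n,m}(K)$-module, and $\mu_j(P,M_{(u,v)})=\dim_K V_{(u,v)}$ for every $(u,v)$.

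Next comes the trichotomy on finiteness, which uses Theorem \ref{bi-finite length} and Remark \ref{rmk_fin} applied to $V$. If $\dim_K V_{(u,v)}$ is finite at some point of each of the four sectors $\mathcal{NE}$, $\mathcal{NW^*}\cup\mathrm{Trun}(\mathcal N)$, $\Z^2\setminus(\mathcal N\cup\mathcal E)$ and $\mathcal{S^*E}\cup\mathrm{Trun}(\mathcal E)$, then in particular it is finite at some $(a_1,b_1)\ge(0,0)$ and some $(a_2,b_2)\le(-1,-1)$. Theorem \ref{bi-finite length} then gives finiteness everywhere, which is case (iii). Otherwise Remark \ref{rmk_fin} shows that finiteness at one point of a sector propagates to the whole sector, so the finite locus is a union of sectors. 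If that union is empty we are in case (i); if it is a nonempty proper union we are in case (ii). These cases are mutually exclusive by construction. The obstacle here is that Remark \ref{rmk_fin} is only sketched, and the sectors must exactly match the sets in which the proof of Lemma \ref{bi-finite length_lemma} propagates finiteness (the boundaries $u=-1$ and $v=-1$). I would recheck this via the isomorphisms $V_{(u,v)}\cong V_{(u,0)}$ for $v\ge0$ and $V_{(u,v)}\cong V_{(u,-1)}$ for $v\le-1$, obtained from Proposition \ref{bigrd-genEur-XY-con} after cutting down by Koszul homology on the other variables.

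Finally, in case (iii), I describe the vanishing pattern using $\mu_j(P,M_{(u,v)})\neq0$ if and only if $V_{(u,v)}\neq0$. If $V=0$ we are in (a). Otherwise I apply the rigidity results to $V$. If $V_{(u_0,v_0)}\neq0$ with $-n<u_0<0$ and $-m<v_0<0$, then Theorem \ref{bi-rigid2} gives the region $\mathcal C$. A nonzero component in a truncated region yields the full $\mathcal N$, $\mathcal W^*$, $\mathcal S^*$ or $\mathcal E$, again by Theorem \ref{bi-rigid2}. A nonzero component in one of the four blocks yields the whole block by Theorem \ref{bi-rigid}. Since the nine regions cover $\Z^2$, the nonvanishing locus is a union of some of these regions, and its complement is the vanishing locus, which is (b). The main difficulty in this step is making sure that the union is really "closed": each point of the nonvanishing set must lie in a listed region that is entirely nonvanishing. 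The rigidity theorems give exactly this, since each nonzero point generates its full region.
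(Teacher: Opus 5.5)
\textbf{There is a genuine gap, in the finiteness step, and it cannot be repaired because alternative (ii) is false as stated for $n,m\ge 2$.}

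Your reduction to $V=H_g(Z_1,\ldots,Z_g;L)$ with $\mu_j(P,M_{(u,v)})=\dim_K V_{(u,v)}$ is the same as the paper's proof and is fine. So is your treatment of case (iii) via Theorems \ref{bi-rigid} and \ref{bi-rigid2}. The gap is the claim that finiteness at one point of a sector propagates through the whole sector (Remark \ref{rmk_fin}). This is false as soon as $n,m\ge 2$. The isomorphisms $V_{(u,v)}\cong V_{(u,-1)}$ for $v\le -1$ that you propose to check it with come from Proposition \ref{bigrd-genEur-XY-con}, and that needs $m=1$. For $m\ge 2$, Koszul homology in the remaining $Y$'s gives such isomorphisms only for those homology modules, not for $V$ itself.

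Concretely, take $C=K$, $n=m=2$, $P=0$, $j=0$ and
$M=H^3_{(X_2,Y_1,Y_2)}(R)\cong X_2^{-1}Y_1^{-1}Y_2^{-1}K[X_1,X_2^{-1},Y_1^{-1},Y_2^{-1}]$ (Example \ref{eg_rigidity}). Then $V=M$ and $\mu_0(P,M_{(u,v)})=\dim_K M_{(u,v)}$. The monomials $X_1^bX_2^{-a}Y_1^{-c_1}Y_2^{-c_2}$, with $b\ge 0$ and $a,c_1,c_2\ge 1$, have bidegree $(b-a,-c_1-c_2)$. Hence:
\begin{itemize}
\item $M_{(u,v)}=0$ for $v\ge -1$;
\item $\dim_K M_{(u,v)}=\infty$ for every $u$ and every $v\le -2$.
\end{itemize}
So $(0,-1)$ and $(0,-2)$ lie in the same sector $\mathcal{S^*E}\cup\mathrm{Trun}(\mathcal{E})$, one with finite and one with infinite value. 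The finite locus $\{v\ge -1\}$ is not a union of the four sectors, so none of (i), (ii), (iii) holds.

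The paper's own proof has the same defect. It cites only Theorem \ref{bi-finite length}, and this $M$ violates that theorem too: $M_{(0,0)},M_{(0,-1)},M_{(-1,0)},M_{(-1,-1)}$ all vanish, yet $M_{(0,-2)}$ is infinite-dimensional. Its induction breaks at the $Y_m$-step. Finiteness of $M_{(a,-1)}$ only gives finiteness of $H_1(Y_m;M)_{(a,0)}$, a point in the upper half-plane. Here $H_1(Y_2;M)_{(a,-1)}$ is infinite-dimensional, so the induction hypothesis is never met in the lower half-plane. The isomorphisms you want exist only when $m=1$ (symmetrically $n=1$), where one direction really is constant on each side of the $-1/0$ boundary. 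For $n,m\ge 2$ the finiteness statement would need a finer partition, for instance one separating the strips $-m<v<0$ and $-n<u<0$ from the blocks, and a new argument.
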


\begin{proof}
By Remark \ref{rmk_Bass}, $\mu_j(P,M_{(u,v)})=\mu_0(P,H_P^j(M_{(u,v)}))$ $\mbox{for all } (u,v) \in \Z^2$. Then from the proof of Theorem \ref{comp-inj} we get that $\mu_j(P,M_{(u,v)})=\alpha_{(u,v)}$, as $H^j_{P}(M_{(u,v)})_P=L_{(u,v)} \cong E_B(k)^{\alpha_{(u,v)}}$, where $B=\widehat{C_P}$. By Corollary \ref{full-koszul-hom}, $V=H_g(Y_1,\ldots, Y_g; L)$ is a bigraded generalized Eulerian holonomic $A_{n,m}(K)$-module with 
\begin{align*}
V_{(u,v)}&=H_g({\bf Y}; L)_{(u,v)}=H_g({\bf Y}; L_{(u,v)})=H_g({\bf Y}; E_B(K)^{\alpha_{(u,v)}})=H_g({\bf Y}; E_B(K))^{\alpha_{(u,v)}}=K^{\alpha_{(u,v)}},
\end{align*}
where ${\bf Y}:=Y_1,\ldots, Y_g$ and the last equality holds from \cite[2.11]{TP2}. Notice $\dim_K V_{(u,v)}= \alpha_{(u,v)}=\mu_j(P,M_{(u,v)})$. Since $V$ is a bigraded generalized Eulerian, holonomic $A_{n,m}(K)$-module, finitesness as in (ii) and (iii) follows from Theorem \ref{bi-finite length}. Let $\mu_j(P,M_{(u_0,v_0)})=\alpha_{(u_0, v_0)}\neq 0$ for some $(u_0,v_0) \in \Z^2$. Then $V_{(u_0,v_0)}=K^{\alpha_{(u_0, v_0)}} \neq 0$. The result follows by applying Theorems \ref{bi-rigid} and \ref{bi-rigid2} to $V$.
\end{proof}

\begin{theorem}[With hypotheses as in \ref{sa_application}]
Let $n,m\geq 1$. Let $P$ be a prime ideal in $C$. Fix $j \geq 0$. Then there exist polynomials $P^a_M(X,Y)\in \mathbb{Q}[X,Y]$ for $a=1,2,3,4$ such that 
	\begin{enumerate}[\rm(i)]
		\item $P^1_M(u,v) = \mu_j(P,M_{(u,v)})$  for all  $u,v \gg 0,$ if $\mu_j(P,M_{(u_1,v_1)})$ is finite for some $(u_1,v_1)\in \mathcal{NE}$.
		\item $P^2_M(u,v) = \mu_j(P,M_{(u,v)}) $  for all  $u \ll 0,  v \gg 0$, if $\mu_j(P,M_{(u_1,v_1)})$ is finite for some $(u_1,v_1)\in \mathcal{NW^*}$.
		\item $P^3_M(u,v) = \mu_j(P,M_{(u,v)}) $   for all  $  u,v \ll 0$, if $\mu_j(P,M_{(u_1,v_1)})$ is finite for some $(u_1,v_1)\in \mathcal{S^*W^*}$.
		\item $P^4_M(u,v) = \mu_j(P,M_{(u,v)})$  for all  $  u \gg 0, v \ll 0$, if $\mu_j(P,M_{(u_1,v_1)})$ is finite for some $(u_1,v_1)\in  \mathcal{S^*E}$.
	\end{enumerate}
\end{theorem}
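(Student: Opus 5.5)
The plan is to reduce the statement about Bass numbers to the polynomial statement for dimensions of components, Theorem \ref{bi-poly}. This is the same transfer already used in the proof of Theorem \ref{bi-bass}.

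First, by Remark \ref{rmk_Bass} and the proof of Theorem \ref{comp-inj}, we have
\[\mu_j(P,M_{(u,v)})=\mu_0\big(P,H^j_P(M_{(u,v)})\big)=\alpha_{(u,v)},\]
where $H^j_P(M_{(u,v)})_P=L_{(u,v)}\cong E_B(K)^{\alpha_{(u,v)}}$. Here $B=\widehat{C_P}=K[[Z_1,\ldots,Z_g]]$ with $K=k(P)$ and $g=\hgt P$, and $L=H^j_{PS}\circ\TT(S)$ is a bigraded generalized Eulerian holonomic $A_{n,m}(\Lambda)$-module; this uses Theorem \ref{Mul-LC} together with \cite[Theorem 4.2]{TP2}.

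Next, set $V=H_g(Z_1,\ldots,Z_g;L)$. By Corollary \ref{full-koszul-hom}, $V$ is a bigraded generalized Eulerian holonomic $A_{n,m}(K)$-module. Koszul homology commutes with taking bigraded components, since the $Z_i$ have bidegree $(0,0)$. Combined with \cite[2.11]{TP2}, this gives
\[V_{(u,v)}\cong H_g(\underline{Z};E_B(K))^{\alpha_{(u,v)}}\cong K^{\alpha_{(u,v)}},\]
and hence
\[\dim_K V_{(u,v)}=\mu_j(P,M_{(u,v)})\quad\text{for every }(u,v)\in\Z^2,\]
with both sides finite or infinite simultaneously.

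With this identification, each hypothesis (i)--(iv) becomes the corresponding hypothesis of Theorem \ref{bi-poly} for $V$. For instance, finiteness of $\mu_j(P,M_{(u_1,v_1)})$ for some $(u_1,v_1)\in\mathcal{NE}$ means $\dim_K V_{(u_1,v_1)}<\infty$. Theorem \ref{bi-poly} (whose proof first invokes Remark \ref{rmk_fin} to spread finiteness over the relevant sector) then produces polynomials $P^a_V(X,Y)\in\QQ[X,Y]$ agreeing with $\dim_K V_{(u,v)}$ in the asymptotic quadrants $u,v\gg0$; $u\ll0,\ v\gg0$; $u,v\ll0$; and $u\gg0,\ v\ll0$. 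Setting $P^a_M:=P^a_V$ gives the statement. Note that the fourth region in Theorem \ref{bi-poly} is written $\mathcal{N^*E}$, which I read as $\mathcal{S^*E}$, matching (iv) here.

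The main obstacle is justifying the identification $V_{(u,v)}\cong K^{\alpha_{(u,v)}}$ in a form compatible with the bigraded $A_{n,m}(K)$-structure, rather than only degreewise. This requires two checks:
\begin{enumerate}[\rm(1)]
\item The Koszul complex on $Z_1,\ldots,Z_g$ is a complex of bigraded modules over the Weyl algebra in the $X,Y$ variables, because the $Z_i$ commute with $X_i,Y_j,\partial_i,\delta_j$.
\item Each $L_{(u,v)}$ is supported only at the maximal ideal of $B$, even when $H^j_P(M_{(u,v)})_P=0$, in which case $\alpha_{(u,v)}=0$ and the formula still holds.
\end{enumerate}
Both points are handled exactly as in the proofs of Theorems \ref{comp-inj} and \ref{bi-bass}. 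I would cite those arguments rather than repeat them.
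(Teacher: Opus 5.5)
Your proposal is correct and is precisely the paper's argument. The proof of Theorem \ref{bi-bass} identifies $\mu_j(P,M_{(u,v)})=\alpha_{(u,v)}=\dim_K V_{(u,v)}$, where $V$ is the top Koszul homology of $L$ on a regular system of parameters of $\widehat{C_P}$, and applying Theorem \ref{bi-poly} to the bigraded generalized Eulerian holonomic $A_{n,m}(K)$-module $V$ then gives the four polynomials. Your naming of these parameters as $Z_1,\ldots,Z_g$ is the consistent choice, since the paper's $Y_1,\ldots,Y_d$ clashes with the bigraded variables, and reading $\mathcal{N^*E}$ as $\mathcal{S^*E}$ in Theorem \ref{bi-poly} is also correct.
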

\begin{proof}
From the proof of Theorem \ref{bi-bass}, we have $\mu_j(P,M_{(u,v)})= \alpha_{(u,v)}=\dim_K V_{(u,v)}$, where $V=H_d(Y_1,\ldots, Y_d; L)$ is a bigraded generalized Eulerian holonomic $A_{n,m}(k)$-module. The result now follows from Theorem \ref{bi-poly}.
\end{proof}

\s{\bf Associated primes}.

 \begin{theorem}[With hypotheses as in \ref{sa_application}] \label{bi-ass} Further assume that either $C$ is a regular local ring or a smooth affine algebra over a field $K$ of characteristic zero. Then $\bigcup_{(u,v) \in \Z^2} \Ass_C M_{(u,v)}   $ is a finite set.

Moreover, if we pick $(u,v)$ from one of the blocks $\{\mathcal{NE}, \mathcal{NW^*}, \mathcal{S^*E}, \mathcal{S^*W^*}\}$, then $\Ass_C M_{(u,v)}=\Ass_C M_{(u_0,v_0)}$, where $(u_0,v_0) \in \Z^2$ denotes the corners of the respective blocks.
\end{theorem}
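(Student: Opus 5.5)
The plan follows the graded proof in \cite[Theorem 1.2/Section 10]{TP2}, replacing the graded rigidity and finiteness statements by their bigraded versions.

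\textbf{Finiteness of the union.} First pass to the total grading. By Remark \ref{Rel-bigrd-grd-genEur}, $\mathrm{Tot}(M)$ is graded with $\mathrm{Tot}(M)_r=\bigoplus_{u+v=r}M_{(u,v)}$. Moreover, $\TT$ regarded on ${}^*\Mod(R)$ with the standard $\Z$-grading $\deg X_i=\deg Y_j=1$ is a graded Lyubeznik functor, since bihomogeneous ideals are homogeneous for the total grading. Hence \cite[Theorem 1.2]{TP2} shows that $\bigcup_{r\in\Z}\Ass_C \mathrm{Tot}(M)_r$ is finite. Since $M_{(u,v)}$ is a $C$-direct summand of $\mathrm{Tot}(M)_{u+v}$, we have $\Ass_C M_{(u,v)}\subseteq \Ass_C \mathrm{Tot}(M)_{u+v}$, which gives the first assertion. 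If one prefers not to quote the total-degree version, the same argument as in \cite{TP2} applies directly: the Lyubeznik finiteness of $\Ass_R \TT(R)$, for $C$ regular local or smooth affine, together with the observation that each $P\in\Ass_C M_{(u,v)}$ is a contraction of a bihomogeneous associated prime of $M$ over $R$ (use the bigraded annihilator of a bihomogeneous element).

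\textbf{Constancy on blocks.} Fix $P\in\Spec C$. The key point is that $P\in\Ass_C M_{(u,v)}$ iff $\mu_0(P,M_{(u,v)})\neq 0$. Now apply Theorem \ref{bi-bass} with $j=0$: the set of $(u,v)$ with $\mu_0(P,M_{(u,v)})\neq 0$ is governed by the rigidity theorems for the module $V=H_g(Y_1,\dots,Y_g;L)$ constructed in its proof. In the infinite case, however, one should not use $\dim_K V_{(u,v)}$ but only whether $V_{(u,v)}\neq 0$. This is legitimate because $V_{(u,v)}=K^{\alpha_{(u,v)}}$ holds regardless of finiteness, so that $\mu_0(P,M_{(u,v)})\ne 0$ iff $V_{(u,v)}\neq 0$.

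Since $V$ is a bigraded generalized Eulerian $A_{n,m}(K)$-module, Theorem \ref{bi-rigid} applies to it. If $\mathcal{R}$ is one of $\mathcal{NE},\mathcal{NW^*},\mathcal{S^*W^*},\mathcal{S^*E}$, then $V_{(u,v)}\neq0$ for some $(u,v)\in\mathcal{R}$ iff $V_{(u,v)}\ne0$ for all $(u,v)\in\mathcal R$, and in particular iff $V_{(u_0,v_0)}\neq 0$ at the corner. Thus for $(u,v)\in\mathcal R$ we have $P\in\Ass_C M_{(u,v)}$ iff $P\in\Ass_C M_{(u_0,v_0)}$, which is the second assertion.

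\textbf{Main obstacle.} The delicate step is justifying that $\mu_0(P,M_{(u,v)})\neq0$ iff $V_{(u,v)}\neq0$ without any finiteness hypothesis. This rests on the identification $H^0_P(M_{(u,v)})_P=L_{(u,v)}\cong E_B(k)^{\alpha}$ from Theorem \ref{comp-inj}, together with $H_g(\mathbf Y;E_B(k))=k$, where $g=\hgt P$. I would check that these isomorphisms respect the bigrading, so that $V$ really is a bigraded generalized Eulerian module via Corollary \ref{full-koszul-hom}. Holonomicity is not needed for the rigidity step.
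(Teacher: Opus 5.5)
Your proposal is correct. For the constancy of $\Ass_C M_{(u,v)}$ on the four blocks you follow the paper: $P\in\Ass_C M_{(u,v)}$ iff $\mu_0(P,M_{(u,v)})\neq 0$, and rigidity controls where this happens. The paper simply quotes Theorem~\ref{bi-bass} with $j=0$. You instead use the module $V$ from its proof, with $V_{(u,v)}=K^{\alpha_{(u,v)}}$, and apply Theorem~\ref{bi-rigid} to $V$ directly. This is slightly more careful than the paper. In case (ii), Theorem~\ref{bi-bass} says nothing about vanishing on the sectors where the Bass numbers are finite, whereas rigidity of $V$ needs neither finiteness nor holonomicity.

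For the finiteness of $\bigcup_{(u,v)}\Ass_C M_{(u,v)}$ your route is genuinely different. The paper only says that the proof of \cite[Theorem 12.4(1)]{TP2} carries over, i.e.\ the graded argument is re-run for the bigraded module; your secondary sketch is of this kind. Your main argument is different:
\begin{itemize}
\item a bigraded Lyubeznik functor is also a graded Lyubeznik functor for the total grading $\deg X_i=\deg Y_j=1$;
\item so the graded theorem of \cite{TP2} applies to $\mathrm{Tot}(M)$ as a black box;
\item since $M_{(u,v)}$ is a $C$-direct summand of $\mathrm{Tot}(M)_{u+v}$, one gets $\bigcup_{(u,v)}\Ass_C M_{(u,v)}=\bigcup_r\Ass_C\mathrm{Tot}(M)_r$.
\end{itemize}
This needs no new work and no bigraded input at all. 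The paper's route stays inside the bigraded category but gains nothing extra here.

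If you keep the secondary sketch, it needs two small repairs. First, the annihilator of a bihomogeneous element need not be prime, so use $\Ass_C N=\{Q\cap C : Q\in\Ass_R N\}$ instead. Second, when $C$ is regular local, $R$ is neither local nor smooth affine, so Lyubeznik's theorem only bounds the associated primes contained in a fixed maximal ideal. You must add that the associated primes of the bigraded module $M$ are bihomogeneous, hence contained in $\mathfrak{m}_C R+(X_1,\dots,X_n,Y_1,\dots,Y_m)$.
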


\begin{proof}
The first statement follows using the same line of proof as in \cite[Theorem 12.4 (1)]{TP2}.

\vspace{0.1cm}
Next, let 
\[
\bigcup_{(u,v) \in \Z^2} \Ass_C M_{(u,v)}  = \{ P_1, \ldots, P_l \}.
\]

\begin{minipage}{0.5\textheight}
Let $P = P_i$ for some $i$. Then by Theorem \ref{bi-bass},
\begin{enumerate}[(i)]
	\item $\mu_0(P, M_{(0,0)}) > 0$ if and only if $\mu_0(P, M_{(u,v)}) > 0$ for all $(u,v) \in \mathcal{NE}$. 
	\item $\mu_0(P, M_{(-n,0)}) > 0$ if and only if $\mu_0(P, M_{(u,v)}) > 0$ for all $(u,v) \in \mathcal{NW^*}$. 
	\item $\mu_0(P, M_{(-n,-m)}) > 0$ if and only if $\mu_0(P, M_{(u,v)}) > 0$ for all $(u,v) \in \mathcal{S^*W^*}$. 
	\item $\mu_0(P, M_{(0,-m)}) > 0$ if and only if $\mu_0(P, M_{(u,v)}) > 0$ for all $(u,v) \in  \mathcal{S^*E}$. 
\end{enumerate}  
	\end{minipage}
\begin{minipage}{0.175\textheight}
\begin{center}
	\begin{tikzpicture}[scale=0.15]
	\draw[->] (-8.5,0)--(8.5,0) node[right]{$u$};
	\draw[->] (0,-8.5)--(0,8.5) node[above]{$v$};
	\draw[dashdotted, red] (-2,8.5)--(-2,-8.5);
	\draw[dashdotted, red] (-8.5,-3)--(8.5, -3);
	\draw[fill=cyan,fill opacity=0.35,draw=none] (0,8.5)--(0,0)--(8.5,0)--(8.5,8.5)--(0,8.5);
	\draw[fill=cyan,fill opacity=0.35,draw=none] (-2,8.5)--(-2,0)--(-8.5,0)--(-8.5,8.5)--(-2,8.5);
	\draw[fill=cyan,fill opacity=0.35,draw=none] (-2,-8.5)--(-2,-3)--(-8.5,-3)--(-8.5,-8.5)--(-2,-8.5);
	\draw[fill=cyan,fill opacity=0.35,draw=none] (0,-8.5)--(0,-3)--(8.5,-3)--(8.5,-8.5)--(0,-8.5);
	\node[blue,draw=none] at (3,-5) {\tiny $(0,-m)$};
	\node[red] at (0,-3){\tiny$\bullet$};
	\node[blue,draw=none] at (2.5,1.5) {\tiny $(0,0)$};
	\node[red] at (0,0){\tiny$\bullet$};
	\node[blue,draw=none] at (-5,1.5) {\tiny $(-n,0)$};
	\node[red] at (-2,0){\tiny$\bullet$};
	\node[blue,draw=none] at (-5.5,-5) {\tiny $(-n,-m)$};
	\node[red] at (-2,-3){\tiny$\bullet$};
	\end{tikzpicture}
\end{center}
\end{minipage}

\vspace{0.2cm}
This establishes the result.	
\end{proof}

\s{\bf Dimensions of supports and injective dimensions}.

\begin{lemma}[With hypotheses as in \ref{sa_application}]\label{injdim-dim} Then for all $(u,v) \in \Z^2$,
	\[
	\injdim M_{(u,v)} \leq \dim M_{(u,v)}.
	\]
\end{lemma}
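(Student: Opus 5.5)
We follow the graded argument of \cite[Theorem 13.1]{TP2} essentially verbatim, the only input being that the Bass numbers of a component $N:=M_{(u,v)}$ are controlled by Remark \ref{rmk_Bass}. Both sides are computed with $N$ regarded as a $C$-module. If $\dim N=\infty$ there is nothing to prove, so assume $\dim N=d<\infty$. Since $C$ is Noetherian, $\injdim_C N$ is the supremum of those $j$ for which $\mu_j(P,N)\neq 0$ for some prime $P$ of $C$. It therefore suffices to show that $\mu_j(P,N)=0$ for every prime $P$ and every $j>d$.

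Fix a prime $P$ of $C$. By Remark \ref{rmk_Bass}, which rests on Theorem \ref{comp-inj} and Lemma \ref{lyu-lemma}, we have
\[\mu_j(P,N)=\mu_0\big(P,H^j_P(N)\big)\quad\text{for all } j\geq 0.\]
We separate two cases. First suppose $P\notin \operatorname{Supp} N$. Then $N_P=0$, and since $\mu_j(P,N)=\dim_{k(P)}\operatorname{Ext}^j_{C_P}(k(P),N_P)$, all Bass numbers at $P$ vanish. Next suppose $P\in\operatorname{Supp}N$. Then $\mu_0(P,H^j_P(N))$ depends only on the localization $H^j_P(N)_P=H^j_{PC_P}(N_P)$. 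Grothendieck's vanishing theorem holds for arbitrary, not necessarily finitely generated, modules over the Noetherian local ring $C_P$. Hence $H^j_{PC_P}(N_P)=0$ whenever $j>\dim N_P$. Finally, $\dim N_P\leq \hgt P$ and $\dim N_P\le \dim N=d$, using that $\operatorname{Supp}N_P$ corresponds to the primes of $\operatorname{Supp}N$ contained in $P$. Consequently $\mu_j(P,N)=0$ for all $j>d$, and therefore $\injdim N\leq d$.

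The only delicate point is the vanishing theorem for non-finitely generated modules, together with the meaning of $\dim N$. We interpret $\dim N$ as $\dim \operatorname{Supp}_C N$. Grothendieck vanishing for an arbitrary module $N_P$ follows by writing $N_P$ as the direct limit of its finitely generated submodules. Each such submodule has support contained in $\operatorname{Supp}N_P$, and local cohomology commutes with direct limits. The genuinely non-formal ingredient is the injectivity of $H^j_P(N)_P$, which makes Lemma \ref{lyu-lemma} applicable. This is exactly Theorem \ref{comp-inj}, and it is where regularity of $C$ and the bigraded Lyubeznik functor structure enter. Since that result is already established, the remaining argument is routine.
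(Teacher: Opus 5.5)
Your proof is correct and follows the paper's argument: both use Remark \ref{rmk_Bass} to write $\mu_j(P,N)=\mu_0(P,H^j_P(N))$, and then kill $H^j_P(N)$ for $j>\dim N$ by Grothendieck's vanishing theorem, which holds for arbitrary modules over a Noetherian ring. Your case split on $\operatorname{Supp}N$ and the passage to $N_P$ are harmless but unnecessary, since the paper applies the vanishing directly to $H^j_P(N)$ over $C$ (cf.\ \cite[6.1.2]{BS}).
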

\begin{proof}
	Let $P$ be a prime ideal in $C$. Then from Remark \ref{rmk_Bass} we have
	\[
	\mu_j(P, M_{(u,v)}) = \mu_0(P, H^j_P(M_{(u,v)})).
	\]
	By Grothendieck's vanishing theorem, $H^j_P(M_{(u,v)}) = 0$ for all $j > \dim M_{(u,v)}$, see \cite[6.1.2]{BS}.  So $\mu_j(P, M_{(u,v)}) = 0$ for every $j > \dim M_{(u,v)}$. The statement follows.
\end{proof}

\begin{theorem}[With hypotheses as in \ref{sa_application} and notations as in Subsection \ref{notations}]\label{injdim-and-dim-gen} Pick a region $\mathcal{R}$ from the set \[\{\mathcal{NE}, \mathcal{NW^*}, \mathcal{S^*W^*}, \mathcal{S^*E}, \mathrm{Trun}(\mathcal{N}), \mathrm{Trun}(\mathcal{W^*}), \mathrm{Trun}(\mathcal{S^*}), \mathrm{Trun}(\mathcal{E})\}.\] Then $\injdim M_{(u,v)}$ and $\dim M_{(u,v)}$ remain unchanged for all $(u,v)\in \mathcal{R}$. In particular, if $\mathcal{R}$ is one of the blocks $\big\{\mathcal{NE}, \mathcal{NW^*}, \mathcal{S^*W^*}, \mathcal{S^*E}\big\}$, then 
\[\injdim M_{(u,v)}=\injdim M_{(u_0,v_0)} \quad  \mbox{ and } \quad \dim M_{(u,v)}=\dim M_{(u_0,v_0)}\] for all $(u,v)\in \mathcal{R}$, where $(u_0,v_0)$ denotes the corner of the respective block.
\end{theorem}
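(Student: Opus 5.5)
We reduce both invariants to the vanishing/nonvanishing pattern of Bass numbers and local cohomology, and then invoke the rigidity statements. Since $C$ is regular and each $M_{(u,v)}$ is a $C$-module, $\injdim M_{(u,v)} = \sup\{ j \mid \mu_j(P, M_{(u,v)}) \neq 0 \text{ for some prime } P\}$. The dimension can be read off as $\dim M_{(u,v)} = \max\{\dim C/P \mid P \in \operatorname{Supp} M_{(u,v)}\}$, and $P\in\operatorname{Supp} M_{(u,v)}$ if and only if $\mu_0(Q,M_{(u,v)})\neq 0$ for some prime $Q \supseteq P$. Equivalently, the support is the closure of the set of primes with $\mu_0(P, M_{(u,v)}) \neq 0$, since every prime in the support contains an associated prime of some finitely generated submodule.

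It therefore suffices to show, for each fixed prime $P$ of $C$ and each $j \geq 0$, that the condition ``$\mu_j(P, M_{(u,v)}) \neq 0$'' is constant as $(u,v)$ ranges over $\mathcal{R}$. For this we reuse the module $V = H_g(Y_1, \ldots, Y_g; L)$ constructed in the proof of Theorem \ref{bi-bass}. It is a bigraded generalized Eulerian $A_{n,m}(K)$-module with $V_{(u,v)} \cong K^{\mu_j(P, M_{(u,v)})}$, so that $\mu_j(P,M_{(u,v)}) \neq 0$ exactly when $V_{(u,v)} \neq 0$. This holds whether or not the Bass number is finite, since $\alpha_{(u,v)}$ may be an infinite cardinal.

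We now apply rigidity to $V$. If $\mathcal{R}$ is one of the blocks $\mathcal{NE},\mathcal{NW^*},\mathcal{S^*W^*},\mathcal{S^*E}$, then Theorem \ref{bi-rigid} shows that $V_{(u_1,v_1)}\neq0$ for a single point of $\mathcal{R}$ forces $V_{(u,v)}\neq0$ on all of $\mathcal{R}$. Hence nonvanishing is constant on $\mathcal{R}$, and in particular it agrees with nonvanishing at the corner $(u_0,v_0)$. If $\mathcal{R}$ is a truncated region such as $\mathrm{Trun}(\mathcal{N})$, Theorem \ref{bi-rigid2} shows that $V_{(u_0,v_0)}\neq0$ at one point of $\mathrm{Trun}(\mathcal{N})$ forces nonvanishing on all of $\mathcal{N}\supseteq \mathrm{Trun}(\mathcal{N})$. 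So again the pattern is constant on $\mathcal{R}$, and the same argument handles the other three truncated regions.

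Since the zero/nonzero pattern of $\mu_j(P,-)$ is constant on $\mathcal{R}$ for all $P$ and all $j$, both the supremum defining $\injdim$ and the set of primes $P$ with $\mu_0(P,-) \neq 0$ are independent of $(u,v)\in\mathcal{R}$. Therefore $\injdim M_{(u,v)}$ and $\dim M_{(u,v)}$ are constant on $\mathcal{R}$, and on a block they equal their values at the corner.

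The main obstacle is making sure that Theorem \ref{bi-rigid} and Theorem \ref{bi-rigid2} really apply to $V$ for every prime $P$ and every $j$. This requires $V$ to be generalized Eulerian, which Corollary \ref{full-koszul-hom} supplies, and it requires the correspondence between $V_{(u,v)}$ and $\mu_j$ to be bigraded. That correspondence is exactly the computation already carried out in the proof of Theorem \ref{bi-bass}, together with Remark \ref{rmk_Bass}.
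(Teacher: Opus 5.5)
Your proof is correct. For the injective dimension it is the paper's argument. The paper cites Theorem \ref{bi-bass}, whose proof is exactly rigidity (Theorems \ref{bi-rigid} and \ref{bi-rigid2}) applied to the module $V$, so that the zero/nonzero pattern of $\mu_j(P,M_{(u,v)})$ is constant on $\mathcal{R}$.

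For the dimension you take a different route. The paper localizes: since $\TT(-)_P$ is again a bigraded Lyubeznik functor over $C_P[X_1,\ldots,X_n,Y_1,\ldots,Y_m]$, the module $M_P$ is bigraded generalized Eulerian. Theorem \ref{bi-rigid} applied to $M_P$ then shows directly that the condition $(M_{(u,v)})_P\neq 0$ is constant on the block, so $\operatorname{Supp}_C M_{(u,v)}$ is constant. You instead take $j=0$ in the Bass-number pattern to get that $\Ass_C M_{(u,v)}$ is constant on $\mathcal{R}$. You then recover the support as $\bigcup_{Q\in\Ass_C M_{(u,v)}}V(Q)$, which is valid for arbitrary modules over a Noetherian ring.

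The two routes buy different things:
\begin{itemize}
\item The paper's route is lighter for this half. It uses only rigidity of $M_P$, not the completion, Cohen structure theorem and Koszul homology needed to build $V$, so it does not even require $C$ to be regular.
\item Your route yields more. You get constancy of the associated primes on every region in the list, including the truncated ones. The paper proves this only for the four blocks, and separately, in Theorem \ref{bi-ass}.
\item Your explicit treatment of the truncated regions via Theorem \ref{bi-rigid2} is more complete than the paper's write-up. The paper works out only $\mathcal{NE}$ and attributes the first assertion to Lemma \ref{injdim-dim}, an inequality that cannot by itself give constancy.
\end{itemize}

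One slip: in your first description of the support the containment is reversed. It should read: $P\in\operatorname{Supp}M_{(u,v)}$ if and only if $\mu_0(Q,M_{(u,v)})\neq 0$ for some prime $Q\subseteq P$. Your next sentence states this correctly, and the argument is unaffected.
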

\begin{proof}
The first assertion follows from Lemma \ref{injdim-dim}.

\vspace{0.1cm}	
	Next, let $P$ be a prime ideal in $C$.  Let $(u,v)\in \mathcal{NE}$.
	
	\vspace{0.1cm}
	Fix $j \geq 0$. From Theorem \ref{bi-bass} we get that $\mu_j(P, M_{(u,v)})> 0$ if and only if $\mu_j(P, M_{(0,0)})> 0$. Thus 
	\[\injdim M_{(u,v)}=\injdim M_{(0,0)} \quad \mbox{ for all } (u,v) \in \mathcal{NE}.\]
	
Further, observe that $\FF(-)_P$ is a bigraded Lyubeznik functor on ${}^*\Mod(C_P[X_1,\ldots, X_n, Y_1, \ldots, Y_m])$. By Theorem \ref{bi-rigid}, $(M_{(0,0)})_P \neq 0$ if and only if $(M_{(u,v)})_P \neq 0$. Hence 
\[\dim M_{(u,v)}=\dim M_{(0,0)} \quad \mbox{ for all } (u,v) \in \mathcal{NE}.\]
	
	\vspace{0.1cm}
	The respective statements for the other regions can be obtained using same arguments as above.
\end{proof}	

\begin{theorem}[With hypotheses as in \ref{sa_application} and notations as in Subsection \ref{notations}]
Let $n,m \geq 2$. Pick $(u,v) \in \Z^2$ such that $-n<u<0$ and $-m< v<0$. Then 
\begin{enumerate}[\rm(i)]
\item $\injdim M_{(u_1,v_1)} \leq \min \{ \injdim M_{(0,0)}, \injdim M_{(-n,0)}, \injdim M_{(-n,-m)}, \injdim M_{(0,-m)} \},$
\item $\dim M_{(u_1,v_1)} \leq \min \{ \dim M_{(0,0)}, \dim M_{(-n,0)}, \dim M_{(-n,-m)}, \dim M_{(0,-m)} \}$.
\end{enumerate}

Furthermore, suppose that $\mathcal{R}$ is one of the truncated regions $\{\mathrm{Trun}(\mathcal{N}), \mathrm{Trun}(\mathcal{W^*}), \mathrm{Trun}(\mathcal{S^*}), \mathrm{Trun}(\mathcal{E})\}$. Then 
\[\injdim M_{(u,v)} \leq \min \{ \injdim M_{(u_1,v_1)}, \injdim M_{(u_2,v_2)}\} \quad \mbox{ and } \quad \dim M_{(u,v)} \leq \min \{ \dim M_{(u_1,v_1)}, \dim M_{(u_2,v_2)}\}\]	
for all $(u,v)\in \mathcal{R}$, where $\{(u_1,v_1), (u_2,v_2)\}$ denotes the pair of corners of the respective regions.
\end{theorem}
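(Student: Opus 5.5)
The plan is to reduce both inequalities to statements about Bass numbers and supports, and then transport nonvanishing with Theorems \ref{bi-rigid2} and \ref{bi-bass}. (I read the point $(u,v)$ in the hypothesis as $(u_1,v_1)$.) Recall from Lemma \ref{injdim-dim} and Remark \ref{rmk_Bass} that for every prime $P$ of $C$ and every $j$,
\[
\mu_j(P,M_{(u,v)}) \;=\; \dim_K V^{(j,P)}_{(u,v)},
\]
where $V^{(j,P)}$ is the bigraded generalized Eulerian holonomic $A_{n,m}(K)$-module built in the proof of Theorem \ref{bi-bass}. Hence
\[
\injdim M_{(u,v)}=\sup\{\,j : \mu_j(P,M_{(u,v)})\neq 0 \text{ for some } P\,\}.
\]
For Krull dimension we use the description
\[
\dim M_{(u,v)}=\sup\{\,\dim C/P : (M_{(u,v)})_P\neq 0\,\}.
\]
Localizing at $P$ yields a bigraded Lyubeznik functor $\TT(-)_P$ on $C_P[X,Y]$. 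By Theorem \ref{Mul-LC}, $M_P$ is again bigraded generalized Eulerian, with $(M_P)_{(u,v)}=(M_{(u,v)})_P$.

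Both claims therefore follow from one implication, applied to two families of bigraded generalized Eulerian modules $N$: the family $V^{(j,P)}$ and the family $M_P$. The implication is:
\[
N_{(u_1,v_1)}\neq 0 \text{ with } -n<u_1<0,\ -m<v_1<0 \quad\Longrightarrow\quad N_{(a,b)}\neq 0 \text{ for all } (a,b)\in\Z^2 .
\]
This is exactly the last assertion of Theorem \ref{bi-rigid2}.

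For part (i), fix $j$ and $P$ with $\mu_j(P,M_{(u_1,v_1)})\neq 0$. Then $V^{(j,P)}_{(u_1,v_1)}\neq 0$, so $V^{(j,P)}$ is nonzero at all four corners $(0,0),(-n,0),(-n,-m),(0,-m)$. Thus $\mu_j(P,-)\neq 0$ at each corner as well. Taking the supremum over $j$ gives $\injdim M_{(u_1,v_1)}\le \injdim$ at each corner, hence $\le$ the minimum. One caution: Bass numbers may be infinite, but Theorem \ref{comp-inj} already expresses them as the cardinal $\alpha_{(u,v)}$ with $L_{(u,v)}\cong E_B(K)^{\alpha_{(u,v)}}$. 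So I would apply rigidity directly to $L$, or to its Koszul homology $V$, without needing finiteness; Theorem \ref{bi-rigid2} uses only nonvanishing.

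Part (ii) is identical, with the family $M_P$ in place of $V^{(j,P)}$: if $(M_{(u_1,v_1)})_P\neq 0$, then $(M_{(a,b)})_P\neq 0$ at every corner.

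For the truncated regions, I take $\mathrm{Trun}(\mathcal{N})$; the others are symmetric. Pick $(u,v)$ with $-n<u<0$ and $v\ge 0$. The first part of Theorem \ref{bi-rigid2} says: if $N_{(u,v)}\ne 0$, then $N$ is nonzero on all of $\mathcal{N}$, in particular at the two corners $(0,0)$ and $(-n,0)$. Applying this to $V^{(j,P)}$ and to $M_P$ as above gives both inequalities. For $\mathrm{Trun}(\mathcal{W^*})$ one uses the $\mathcal{W^*}$ case, which reaches the corners $(-n,0)$ and $(-n,-m)$, and similarly for the remaining two regions.

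The main obstacle is justifying that the rigidity statements apply to the correct module. This requires two things:
\begin{enumerate}[\rm(1)]
\item $V^{(j,P)}$, or $L$ itself, is bigraded generalized Eulerian over the appropriate Weyl algebra, which was established in the proof of Theorem \ref{bi-bass};
\item $M_P$ is bigraded generalized Eulerian over $A_{n,m}(C_P)$, which holds because localization at a prime of the degree-zero ring $C$ commutes with taking graded pieces.
\end{enumerate}
I would spell out (2) via Theorem \ref{Mul-LC} applied to the functor $\TT(-)_P$.
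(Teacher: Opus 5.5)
Your proposal is correct and follows essentially the paper's proof. The paper gets (i) from Theorem \ref{bi-bass}, which is itself just rigidity applied to the Koszul homology module $V$, as you do directly. It gets (ii) by applying Theorem \ref{bi-rigid2} to the localized module $\TT(R)_P$, and handles the truncated regions through the first part of Theorem \ref{bi-rigid2} exactly as you describe. One small point: for $\mathrm{Trun}(\mathcal{S^*})$ and $\mathrm{Trun}(\mathcal{E})$, the corner the paper lists as $(-m,0)$ must be read as $(0,-m)$, which is what your argument implicitly uses.
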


\begin{proof}
From Theorem \ref{bi-bass} we get that if $\mu_j(P, M_{(u_1, v_1)})> 0$ then all the Bass numbers $\mu_j(P, M_{(0,0)})$, $\mu_j(P, M_{(-n,0)})$, $\mu_j(P, M_{(-n,-m)}), \mu_j(P, M_{(0,-m)})> 0$. Thus (i) follows.
	
	\vspace{0.1cm}
As $\FF(-)_P$ is a bigraded Lyubeznik functor on ${}^*\Mod(C_P[X_1,\ldots, X_n, Y_1, \ldots, Y_m])$ so by Theorem \ref{bi-rigid2}, $(M_{(u_1,v_1)})_P \neq 0$  implies $(M_{(0,0)})_P$, $(M_{(-n,0)})_P,$  $(M_{(-n,-m)})_P$ and $(M_{(0,-m)})_P$ all are nonzero. Thus (ii) follows.
	
	\vspace{0.1cm}
One can obtain the assertions for the truncated regions using similar arguments as above.
\end{proof}

\s {\bf Infinite generation}.\\
In this subsection, we give a sufficient condition for infinite generation of components of $H^i_I(R)$. 
\begin{theorem}[With hypothesis as in \ref{sa_application}]
Further assume that $C$ is a domain. Let $I$ be a bihomogeneous ideal in $R$ such that $I \cap C \neq \phi$. Set $M:=H^i_I(R)=\bigoplus_{(u,v)\in \Z^2}M_{(u,v)}$. If $M_{(u,v)} \neq 0$ for some $(u,v)\in \Z^2$, then $M_{(u,v)}$ is not finitely generated as a $C$-module.
\end{theorem}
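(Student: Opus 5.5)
Since $I\cap C\neq\emptyset$ (read: $I\cap C\neq 0$), fix a nonzero element $a\in I\cap C$. I will show that $a$ acts nilpotently on each element of $M_{(u,v)}$. Then I will combine this with the fact that $C$ is a domain to rule out finite generation.

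First, every element of $H^i_I(R)$ is annihilated by a power of $I$, hence by a power of $a$. Since $a$ has bidegree $(0,0)$, multiplication by $a$ preserves each component $M_{(u,v)}$. So $M_{(u,v)}$ is an $a$-torsion $C$-module: every $z\in M_{(u,v)}$ satisfies $a^k z=0$ for some $k$. If $M_{(u,v)}$ were finitely generated over $C$, taking the maximum exponent over a finite generating set would give some $N$ with $a^N M_{(u,v)}=0$. Then $\operatorname{Supp}_C M_{(u,v)}\subseteq V(a)$, and every associated prime $P$ of $M_{(u,v)}$ contains $a\neq 0$. Since $C$ is a domain, $\hgt P\geq 1$.

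Next, I use the structure from Theorem \ref{comp-inj} and Remark \ref{rmk_Bass}. Choose $P\in\Ass_C M_{(u,v)}$, which is possible because $M_{(u,v)}\neq 0$. Then $\mu_0(P,M_{(u,v)})>0$, and $H^0_P(M_{(u,v)})_P$ is injective and nonzero. By the proof of Theorem \ref{comp-inj}, it is isomorphic to $E_{C_P}(k(P))^{\alpha}$ with $\alpha\geq 1$. If $M_{(u,v)}$ were finitely generated, then $H^0_P(M_{(u,v)})_P$ would be a finitely generated $C_P$-module. It would then be a nonzero finitely generated injective module over the local ring $C_P$, so $E_{C_P}(k(P))$ would be finitely generated. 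That forces $\dim C_P=0$, i.e. $\hgt P=0$. But $C$ is a domain, so this means $P=0$, contradicting $a\in P$. Hence $M_{(u,v)}$ is not finitely generated.

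The delicate point is the input from Theorem \ref{comp-inj}: that $H^0_P(N)_P$ is injective, indeed a direct sum of copies of $E(k(P))$, for $N=M_{(u,v)}$. This is where regularity of $C$ and the Lyubeznik-functor structure are used. I also need the standard fact that $E_{A}(k)$ is finitely generated over a Noetherian local ring $A$ only if $A$ is Artinian. I would cite this via Matlis duality: a finitely generated $E$ has finite length, so $\widehat{A}\cong\operatorname{Hom}(E,E)$ has finite length. I would also double-check that localization and $H^0_P$ preserve finite generation, which is clear.

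Alternatively, I can bypass Theorem \ref{comp-inj}. Localizing at a minimal prime $P$ of $\operatorname{Supp}M_{(u,v)}$ gives a nonzero finite-length module whose $H^0_P$ is injective, and the same contradiction follows. I expect the main obstacle to be justifying this injectivity cleanly.
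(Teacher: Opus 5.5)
Your proof is correct, but it reaches the contradiction by a different route than the paper.

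\textbf{The paper's route.} It first reduces to $C$ local. It then combines three facts:
\begin{itemize}
\item Lemma \ref{injdim-dim}, $\injdim M_{(u,v)}\le\dim M_{(u,v)}$, which packages all the identities $\mu_j(P,M_{(u,v)})=\mu_0(P,H^j_P(M_{(u,v)}))$ together with Grothendieck vanishing;
\item Bass's theorem, $\injdim M_{(u,v)}=\depth C$ for finitely generated modules of finite injective dimension;
\item the inequality $\dim\le\injdim$.
\end{itemize}
Together these give $\dim M_{(u,v)}=\dim C$. The paper then asserts that $M_{(u,v)}$ is maximal Cohen--Macaulay, and contradicts $I$-torsion by saying that $c^t$ is a nonzerodivisor on it.

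\textbf{Your route.} You localize directly at an associated prime $P$, so no preliminary reduction is needed. You use only the case $j=0$ of Theorem \ref{comp-inj}. You add the elementary fact that a nonzero finitely generated injective module over a Noetherian local ring forces the ring to be Artinian, which you justify correctly via Matlis duality. This makes $P$ a minimal prime of $C$, so $P=0$, contradicting $a\in P$.

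\textbf{Comparison.} Your argument is shorter and does not need Bass's theorem. It also sidesteps the paper's step from $\dim M_{(u,v)}=\dim C$ to ``maximal Cohen--Macaulay,'' which does not follow from the dimension equality alone. What the paper actually needs at that point is only that $0\in\operatorname{Supp}_C M_{(u,v)}$, and that is incompatible with $c^N M_{(u,v)}=0$. Your minimal-prime conclusion delivers exactly this, directly. In return, the paper's chain reuses Lemma \ref{injdim-dim} and gives a by-product that does not use $I\cap C\neq 0$: a nonzero finitely generated component over a local $C$ must satisfy $\dim M_{(u,v)}=\dim C$. The analogous by-product of your argument is that every associated prime of a finitely generated component is a minimal prime of $C$.

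One small correction: your closing ``alternative'' does not actually bypass Theorem \ref{comp-inj}. The injectivity of $H^0_P(M_{(u,v)})_P$ that it invokes is precisely the content of that theorem. Since the paper proves it, citing it is legitimate and is not a real obstacle.
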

\begin{proof}
Let $P$ be a prime ideal in $C$. Notice $R_P=C_P[X_1, \ldots, X_n, Y_1, \ldots, Y_m]$. Moreover, 
\[(M_{(u,v)})_P= H_I^i(R)_{(u,v)} \otimes _C C_P= H_I^i(R \otimes_C C_P)_{(u,v)}= H_I^i(R_P)_{(u,v)}.\] 
for every $(u,v)\in \Z^2$. Clearly, if $H_I^i(R_P)_{(u,v)}$ is not a finitely generated $C_P$-module, then $M_{(u,v)}$ is also not a finitely generated $C$-module. Therefore, it is enough to prove the result under the assumption that $C$ is a local ring.
	
If possible, suppose that $M_{(u,v)}$ is finitely generated as a $C$-module. Then $\depth M_{(u,v)} \leq \dim M_{(u,v)}$. From Theorem \ref{injdim-dim} we have $\injdim M_{(u,v)} \leq \dim M_{(u,v)}$. So by \cite[Theorem 3.1.17]{BH}, $\dim M_{(u,v)} \leq \injdim M_{(u,v)}= \depth C=\dim C$. Together we get that $\dim M_{(u,v)} \leq \dim C= \injdim M_{(u,v)} \leq \dim M_{(u,v)}$, that is, $\dim M_{(u,v)}= \dim C$. Thus $M_{(u,v)}$ is a maximal Cohen Macaulay $C$-module. Let $0 \neq c \in I \cap C$. Since $C$ is a domain, $c^t$ is $B$-regular and hence $M_{(u,v)}$-regular for all $t \geq 1$. But as $M_{(u,v)}$ is $I$-torsion so $(0:_{M_{(u,v)}} c^t) \neq 0$ for some $t$, leading to a contradiction.
\end{proof}

\s {\bf Hilbert series}.\\
We now present a Terai-type formula (see \cite{NT}, \cite[p. 19]{JAM_LC}) for the bigraded Hilbert series of $H^i_I(R)$.

\begin{theorem}[With hypothesis as in \ref{sa_application}]\label{bi_Hilbert} Further assume that $C=K$ is a field. Let $M_{(u,v)}=0$ for all $(u,v)\in \Z^2$ such that either $-n<u<0$ or $-m<v<0$, but $M \neq 0$. Then the bigraded Hilbert series of $M$ is given by
\begin{align*}
H_M(t_1,t_2)=\frac{d_1}{(1-t_1)^n(1-t_2)^m}+\frac{d_2t_1^{-n}}{(1-t_1^{-1})^n(1-t_2)^m}+\frac{d_3t_1^{-n}t_2^{-m}}{(1-t_1^{-1})^n(1-t_2^{-1})^m}+\frac{d_4t_2^{-m}}{(1-t_1)^n(1-t_2)^m},
\end{align*}
where $d_1=\dim_K M_{(0,0)}$, $d_2=\dim_K M_{(-n,0)}$, $d_3=\dim_K M_{(-n,-m)}$, and $d_4=\dim_K M_{(0,-m)}$.
\end{theorem}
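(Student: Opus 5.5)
The plan is to reduce the statement to Proposition \ref{msudan} and then sum four explicit generating functions. Put $M=\TT(R)$ with $C=K$. By Theorem \ref{Mul-LC} and the remarks on holonomicity before Setup \ref{setup-K-bi}, $M$ is a bigraded generalized Eulerian holonomic $A_{n,m}(K)$-module.

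The hypothesis says $M_{(u,v)}=0$ whenever $-n<u<0$ or $-m<v<0$. If $n=1$ or $m=1$ this strip condition is vacuous, so I will first assume $n,m\ge 2$ and remark afterwards that the degenerate cases follow from the same computation. Under $n,m\geq 2$, each truncated region $\mathrm{Trun}(\mathcal{N}),\mathrm{Trun}(\mathcal{W^*}),\mathrm{Trun}(\mathcal{S^*}),\mathrm{Trun}(\mathcal{E})$ meets the strip, so it contains a point where $M$ vanishes. Since $M\neq 0$, the only remaining hypothesis of Proposition \ref{msudan} is $\dim_K M_{(0,0)}<\infty$.

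For this finiteness I would pass to $\mathrm{Tot}(M)$. It is the value of the graded Lyubeznik functor $\mathrm{Tot}\circ\TT$ on the standard graded polynomial ring over the field $K$. By \cite{TP2} (finite length of components over a field, or equivalently holonomicity together with \cite[Theorem 2.12]{TPSR24}-type arguments), its graded pieces $\mathrm{Tot}(M)_r$ are finite dimensional. Since $M_{(u,v)}\subseteq \mathrm{Tot}(M)_{u+v}$, every $M_{(u,v)}$ is finite dimensional. Alternatively, once finiteness is known at one point, Theorem \ref{bi-finite length} or Remark \ref{rmk_fin} propagates it to all $(u,v)$. This step is where I expect the real difficulty, because the vanishing strips alone do not supply the mixed-sign finiteness points that Theorem \ref{bi-finite length} requires.

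With all hypotheses verified, Proposition \ref{msudan}, together with Remark \ref{fin_iso} in its block-by-block form, shows that $M_{(u,v)}=0$ outside the four blocks. Inside each block, $\dim_K M_{(u,v)}$ equals $d_i$ times the dimension of the corresponding piece of $\D/\D\mathfrak{b}$. By Lemma \ref{quo_poly_rel} and Remark \ref{rmk_quo_poly_rel}, these dimensions are
\[
\binom{u+n-1}{n-1} \quad \text{for } u\ge 0, \qquad \binom{-u-1}{n-1} \quad \text{for } u\le -n,
\]
and similarly in $v$. I will read the exponents off the explicit monomial bases rather than from the printed table.

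The Hilbert series then splits as a sum of four products of one-variable series. I will use the identities
\[
\sum_{u\ge0}\binom{u+n-1}{n-1}t_1^u=\frac{1}{(1-t_1)^n}, \qquad \sum_{u\le -n}\binom{-u-1}{n-1}t_1^{u}=\frac{t_1^{-n}}{(1-t_1^{-1})^n},
\]
and the analogous ones in $t_2$ with $m$. Multiplying each product by the corner dimension $d_1,\dots,d_4$ gives the four terms. For the $\mathcal{S^*E}$ block the $t_2$-factor is $t_2^{-m}/(1-t_2^{-1})^m$, which I will state explicitly; the displayed formula appears to have dropped the inverse there.
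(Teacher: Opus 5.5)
Your reduction is the paper's own: its proof of Theorem~\ref{bi_Hilbert} is just an appeal to Proposition~\ref{msudan}. Your summation of the four one-variable series is correct. You are also right that the $d_4$ term should read $d_4t_2^{-m}/\bigl((1-t_1)^n(1-t_2^{-1})^m\bigr)$, and that the dimensions on the negative sides are $\binom{-u-1}{n-1}$ and $\binom{-v-1}{m-1}$, not what the table prints.

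The gap is exactly the step you flagged, the finiteness of $\dim_K M_{(0,0)}$, and your justification for it is false. In general, neither the graded pieces of $\mathrm{Tot}(M)$ nor the bigraded pieces of $\TT(R)$ are finite dimensional. Take $n\ge 2$ and $M=H^1_{(X_1)}(R)=X_1^{-1}K[X_1^{-1},X_2,\dots,X_n,Y_1,\dots,Y_m]$. The elements $X_1^{-a}X_2^{a}$, $a\ge 1$, are linearly independent in $M_{(0,0)}\subseteq \mathrm{Tot}(M)_0$. This is precisely why Theorem~\ref{bi-finite length} carries finiteness hypotheses.

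Your fallback is circular. The strips give zero components along the lines $u=-1$ and $v=-1$. That yields finiteness points in the three sign-sectors other than $\mathcal{NE}$, the mixed-sign ones included. It gives nothing with $u,v\ge 0$, and Remark~\ref{rmk_fin} only propagates finiteness within a sector once it is known at one point there.

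The missing idea is to read $M_{(0,0)}$ off a Koszul homology module of the holonomic module $M$. Since $n,m\ge 2$, the hypothesis gives $M_{(-1,0)}=0=M_{(0,-1)}$. With $\mathfrak{n}=(X_1,\dots,X_n,Y_1,\dots,Y_m)$ this gives
\[
(\mathfrak{n}M)_{(0,0)}=\sum_{i} X_iM_{(-1,0)}+\sum_{j} Y_jM_{(0,-1)}=0,
\]
hence $M_{(0,0)}=(M/\mathfrak{n}M)_{(0,0)}$. Apply \cite[3.4.2, 3.4.4]{BJ} repeatedly, exactly as in the proof of Lemma~\ref{bi-finite length_lemma}. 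Then $M/\mathfrak{n}M$ is a holonomic module over $A_{0,0}(K)=K$, i.e.\ a finite-dimensional vector space. So $\dim_K M_{(0,0)}<\infty$, and Proposition~\ref{msudan} applies.

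The paper's one-line proof does not verify this hypothesis either, so this step is needed in any case.

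Finally, the cases $n=1$ or $m=1$ do not follow ``from the same computation''. Proposition~\ref{msudan} assumes $n,m\ge 2$, and its map $A_{n,m}(K)^p\to M$ factors through $K[X_1,\dots,Y_m]^p$ only because $M_{(-1,0)}=M_{(0,-1)}=0$. In those cases you would instead need the isomorphisms coming from Propositions~\ref{bigrd-genEur-XY-con} and \ref{bigrd-genEur-partial-con}, as in the $r=2$ step of the proof of Theorem~\ref{bi-poly}.
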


\begin{note*}
	
\noindent	
\begin{minipage}[t]{0.425\textheight}
Clearly, $M_{(u,v)}=0$ for all $(u,v)$ lies outside the shaded part in Figure $(ix)$. Note that we are considering all those modules $M$ whose nonzero components are represented by combination of shaded parts in Figures (I), (II), (III) and (IV) in Theorem \ref{bi-rigid}.
\end{minipage}
\begin{minipage}[c][7.75ex][t]{0.25\textheight} 
\begin{center}
	\begin{tikzpicture}[scale=0.12]
	\draw[->] (-8.5,0)--(9,0) node[right]{$u$};
	\draw[->] (0,-8.5)--(0,9) node[above]{$v$};
	\draw[densely dashdotted, red] (-2,8.5)--(-2,-8.5);
	\draw[densely dashdotted, red] (-8.5,-2)--(8.5, -2);
	\draw[fill=cyan,fill opacity=0.35,draw=none] (0,8.5)--(0,0)--(8.5,0)--(8.5,8.5)--(0,8.5);
	\draw[fill=cyan,fill opacity=0.35,draw=none] (-2,8.5)--(-2,0)--(-8.5,0)--(-8.5,8.5)--(-2,8.5);
	\draw[fill=cyan,fill opacity=0.35,draw=none] (-2,-8.5)--(-2,-2)--(-8.5,-2)--(-8.5,-8.5)--(-2,-8.5);
	\draw[fill=cyan,fill opacity=0.35,draw=none] (0,-8.5)--(0,-2)--(8.5,-2)--(8.5,-8.5)--(0,-8.5);
	\node at (0,-2){\textcolor{red}{$\bullet$}};
	\node at (0,0){\textcolor{red}{$\bullet$}};
	\node at (-2,0){\textcolor{red}{$\bullet$}};
	\node at (-2,-2){\textcolor{red}{$\bullet$}};
	\node[blue,draw=none] at (3,-3) {\tiny $(0,-m)$};
	\node[blue,draw=none] at (2.5,1) {\tiny $(0,0)$};
	\node[blue,draw=none] at (-5,1) {\tiny $(-n,0)$};
	\node[blue,draw=none] at (-5.5,-3) {\tiny $(-n,-m)$};
	\end{tikzpicture}	
\end{center}
\end{minipage}
\end{note*}

\begin{proof}
The result follows from Proposition \eqref{msudan}.
\end{proof}

We now discuss some examples.

\begin{example}
Let $R=K[X_1,X_2,Y_1,Y_2]$ be a bigraded polynomial ring over a field $K$ of characteristic zero with $\bideg X_i=(1,0)$ and $\bideg Y_j=(0,1)$ for $1 \leq i,j \leq 2$. Take the ideal $I=(X_1Y_1,X_1Y_2,X_2Y_2,X_2Y_2)$ in $R$. As 
\begin{align*}
&(X_2Y_1)^2=(X_1Y_2+X_2Y_1)X_2Y_1-X_1Y_1X_2Y_2,\\
 \mbox{ and } \quad &(X_1Y_2)^2=(X_1Y_2+X_2Y_1)X_1Y_2-X_1Y_1X_2Y_2,
\end{align*}
so we have $\sqrt{(X_1Y_1,X_1Y_2+X_2Y_1,X_2Y_2)}=\sqrt{I}$.
Notice $\sqrt{I}=\sqrt{(X_1,X_2) \cap (Y_1,Y_2)}=(X_1,X_2) \cap (Y_1,Y_2)$. Set $J=(X_1Y_1,X_1Y_2+X_2Y_1,X_2Y_2)$, $\mathfrak{a}=(X_1,X_2)$, and $\mathfrak{b}=(Y_1,Y_2)$. By Example \ref{eg_rigidity},

\noindent
\begin{minipage}{0.525\textheight}
\begin{align*}
H^2_J(R) = H^2_{I}(R)\cong & H^2_{\mathfrak{a}}(R)\oplus H^2_{\mathfrak{b}}(R)\\
= & X_1^{-1}X_2^{-1}K[X_1^{-1},X_2^{-1},Y_1,Y_2]\oplus Y_1^{-1}Y_2^{-1}K[X_1,X_2,Y_1^{-1},Y_2^{-1}],
\end{align*}
which nonzero components can be represented by the shaded regions of the figure. Therefore, from Theorem \ref{bi_Hilbert} we get that
\end{minipage}
\begin{minipage}{0.2\textheight}
\begin{center}
	\begin{tikzpicture}[scale=0.125]
	\draw[->] (-8.5,0)--(9,0) node[right]{$u$};
	\draw[->] (0,-8.5)--(0,9) node[above]{$v$};
	\draw[densely dashdotted, red] (-2,8.5)--(-2,-8.5);
	\draw[densely dashdotted, red] (-8.5,-2)--(8.5, -2);
	\draw[fill=cyan,fill opacity=0.35,draw=none] (-2,8.5)--(-2,0)--(-8.5,0)--(-8.5,8.5)--(-2,8.5);
	\draw[fill=cyan,fill opacity=0.35,draw=none] (0,-8.5)--(0,-2)--(8.5,-2)--(8.5,-8.5)--(0,-8.5);
	\node at (0,-2){\textcolor{red}{\textbullet}};
	\node at (-2,0){\textcolor{red}{\textbullet}};
	\node[blue,draw=none] at (3,-3) {\tiny $(0,-2)$};
	\node[blue,draw=none] at (-5,1) {\tiny $(-2,0)$};
	\end{tikzpicture}	
\end{center}
\end{minipage}

\begin{align*}
H_{H^2_J(R)}(t_1, t_2)
=&\dim_K M_{(-2,0)} \cdot\frac{t_1^{-2}}{(1-t_1^{-1})^2}\cdot \frac{1}{(1-t_2)^2}+\dim_K M_{(0,-2)} \cdot\frac{1}{(1-t_1)^2}\cdot \frac{t_2^{-2}}{(1-t_2^{-1})^2}\\
=& \frac{t_1^{-2}}{(1-t_1^{-1})^2}\cdot \frac{1}{(1-t_2)^2}+\frac{1}{(1-t_1)^2}\cdot \frac{t_2^{-2}}{(1-t_2^{-1})^2}\\
=&\frac{2}{(1-t_1)^2(1-t_2)^2},
\end{align*}
since $\dim_K M_{(-2,0)}=1=\dim_K M_{(0,-2)}$.
\end{example}

We now apply our results on a binomial edge ideal.
\begin{example}
	Let $R=K[W_1, W_2, W_3, W_4, W_5, W_6]$, and $I=I_2(A)$ be an ideal in $R$ generated by $2 \times 2$-minor of the matrix 
	$A=
	\left( {\begin{array}{ccc}
		W_1 & W_2 & W_3 \\
		W_4 & W_5 & W_6 \\
		\end{array} } \right)$.
	If we rename the variables by $W_i=X_i$ for all $1 \leq i \leq 3$, and $W_j=Y_j$ for all $4 \leq j \leq 6$. Clearly then $I$ is the binomial edge ideal associated to the $3$-cycle complete graph $K_3$. Set $\bideg X_i=(1,0)$ and $\bideg Y_j=(0,1)$ for $1\leq i,j \leq 3$. Then $I$ is a bihomogeneous ideal. In \cite[Example 6.1]{UW}, Walther showed that $H^3_I(R) \cong E_R(K)$. Hence from Theorem \ref{bi_Hilbert} and Example \ref{eg_rigidity} we get
	\begin{equation*}
	\begin{split}
	H(H^3_I(R);(t_1, t_2, t_3))=\dim_K H^3_I(R)_{(-3,-3)} \cdot \frac{t_1^{-3}}{(1-t_1^{-1})^{3}}\cdot  \frac{t_2^{-3}}{(1-t_2^{-1})^{3}}=\frac{t_1^{-3}t_2^{-3}}{(1-t_1^{-1})^{3}(1-t_2^{-1})^{3}}.
	\end{split}
	\end{equation*}
\end{example}


\begin{thebibliography}{15}
	
	\bibitem{JAM_LC}
	J. \`Alvarez. Montaner,
	\emph{Three lectures on local cohomology modules supported on monomial ideals},
	\href{https://upcommons.upc.edu/handle/2117/16881}{2012}.

	
	\bibitem{Mon20}
	J. \`Alvarez Montaner, 
	\emph{Local cohomology of binomial edge ideals and their generic initial ideals}, Collect. Math., Vol. 71 (2020), No. 2, 331--348.
	
	
	
	\bibitem{BJ}
	J.-E. Bj\"{o}rk, \emph{Rings of differential operators}, North-Holland Math. Library \textbf{21}, North Holland, Amsterdam, 1979. 
		

	\bibitem{BS}
	M. P. Brodmann and R. Y. Sharp, 
	\emph{Local cohomology: an algebraic introduction with geometric applications}, Cambridge Studies in Advanced Mathematics, Vol. 60, Cambridge University Press, Cambridge, 1998.  
	

	
	\bibitem{BS2}
	M. P. Brodmann, R. Y. Sharp,
	\emph{Supporting degrees of multi-graded local cohomology modules},
	J. Algebra, Vol. 321 (2009), No. 2, 450--482.
	
	\bibitem{BH}
	W.~Bruns and J.~Herzog, \emph{{Cohen-Macaulay rings}}, Cambridge Studies in Advanced Mathematics, Vol. 39, Cambridge University Press,~Cambridge, 1993.

	\bibitem{GCN}
G. Colom{\'e} i Nin, 
\emph{Multigraded structure and the depth of Blow-up algebras}, Universitat de Barcelona, 2008, \href{https://www.tdx.cat/bitstream/handle/10803/663/GCN_THESIS.pdf?sequence=9.xml}{Thesis}.

\bibitem{Cox}
D. Cox, 
\emph{The homogeneous coordinate ring of a toric variety}, J. Algebraic Geom., Vol 4 (1995), 17--50.


\bibitem{HHHKR}
J. Herzog, T. Hibi, F. Hreinsd\'ottir, T.Kahle, and J. Rauh, \emph{Binomial edge ideals and conditional independence statements}, Adv. Appl. Math., Vol. 45 (2010), No. 3, 317--333.
	
	
	\bibitem{24Hrs}
	S. B. Iyengar, G. J. Leuschke, A. Leykin, C. Miller, E. Miller, A. K. Singh and U. 	Walther, \emph{Twenty-four hours of local cohomology}, Graduate Studies in Mathematics, American Mathematical Society, Vol. 87, 2007.
	
	\bibitem{L23}
	L. Li,
	\emph{Nakajima's quiver varieties and triangular bases of rank-2 cluster algebras}, J. Algebra, Vol. 634 (2023), 97--164.
	
	\bibitem{Lyu1}
	G.~Lyubeznik, 
	\emph{Finiteness Properties of Local Cohomology Modules {\rm(}an Application of $D$-modules to Commutative Algebra{\rm)}},
	Inv. Math., Vol. 113 (1993), 41--55. 
	
	
	\bibitem{MZ}
	L. Ma and W. Zhang, 
	\emph{Eulerian graded D-modules}, Math. Res. Lett. Vol. 21 (2014), No. 1, 149--167.
	
	
	
	\bibitem{Oht11}
	M. Ohtani,
	\emph{Graphs and Ideals Generated by Some $2$-Minors}. Comm. Algebra, Vol. 39 (2011), No. 3, 905--917. 

		
	\bibitem{TP1}
	T. J. Puthenpurakal,
	\emph{de Rahm cohomology of local cohomology modules: The graded case}, Nagoya Math. J., Vol. 217 (2015), 1--21.

	
	\bibitem{TP2}
	T. J. Puthenpurakal,
	\emph{Graded Components of Local Cohomology Modules}, 
	Collect. Math. (2022), 1--37.
	

	
\bibitem{TPSR19}
T. J. Puthenpurakal and S. Roy, 
\emph{Graded components of local cohomology modules of invariant rings}, Comm. Algebra, Vol. 48 (2019), No. 2, 803--814.
		
	\bibitem{TPSR}
	T. J. Puthenpurakal and S. Roy, 
	\emph{Graded components of local cohomology modules II},
Vietnam J. Math., Vol. 52 (2024), No. 1, 1--24.

\bibitem{TPSR24}
T. J. Puthenpurakal and S. Roy, 
Graded components of local cohomology modules of $\mathfrak{C}$-monomial ideals in characteristic zero, to appear in Comm. Algebra, arXiv:2307.03574.
	
	
	\bibitem{TPJS}
	T. J. Puthenpurakal and J. Singh,
	\emph{On derived functors of graded local cohomology modules}, Math. Proc. Camb. Philos. Soc., Vol. 167 (2019), No. 3, 549--565.

	



\bibitem{PS}
P. Schenzel, 
\emph{On the use of local cohomology in algebra and geometry}, in Six lectures on commutative algebra (Bellaterra, 1996), Progr. Math., Vol. 166, 241--292, Birkh¨auser, Basel, 1998.
	
\bibitem{NT}
 N. Terai, 
\emph{Local cohomology modules with respect to monomial ideals}, 
Proceedings of the 20th Symposium on Commutative Ring Theory (1998), 181 - 189.


\bibitem{UW}
U. Walther,
\emph{Algorithmic computation of local cohomology modules and the local cohomological dimension of algebraic varieties},
J. Pure Appl. Algebra, Vol. 139 (1999), Issues 1-3, 303 - 321.


\bibitem{Wil23_1}
D. Williams,
\emph{LF-Covers and Binomial Edge Ideals of K\"onig Type},
preprint: arXiv:2310.14410v4.

\bibitem{Wil23_2}
D. Williams,
\emph{The Local Cohomology Modules of the Binomial Edge Ideals of the Complements of Connected Graphs of Girth at Least 5},
preprint: arXiv:2312.01613v3.	
\end{thebibliography}
\end{document}